\theoremstyle{plain} 
\newtheorem{thm}{Theorem}[section] 
\newtheorem{corollario}[thm]{Corollary} 
\newtheorem{lem}[thm]{Lemma} 
\newtheorem{prop}[thm]{Proposition}
\theoremstyle{definition} 
\newtheorem*{defn}{Definition}
\newtheorem*{es}{Example}
\newtheorem{oss}[thm]{Remark} 
\theoremstyle{remark}
\newcommand{\M}{\mathcal{M}}
\newcommand{\numberset}{\mathbb}
\newcommand{\N}{\numberset{N}}
\newcommand{\F}{\numberset{F}}
\newcommand{\R}{\numberset{R}}
\newcommand{\C}{\numberset{C}}
\newcommand{\Z}{\numberset{Z}}
\DeclarePairedDelimiter{\abs}{\lvert}{\rvert}
\newsavebox{\pullback}
\sbox\pullback{%
\begin{tikzpicture}%
\draw (0,0) -- (1ex,0ex);%
\draw (1ex,0ex) -- (1ex,1ex);%
\end{tikzpicture}}
\newsavebox{\pushout}
\sbox\pushout{%
\begin{tikzpicture}%
\draw (0,0) -- (0ex,1ex);%
\draw (0ex,1ex) -- (1ex,1ex);%
\end{tikzpicture}}
\newsavebox{\hpushout}
\sbox\hpushout{%
\begin{tikzpicture}%
\draw (0ex,2ex) -- (0ex,3ex);%
\draw (0ex,3ex) -- (1ex,3ex);%
\draw (-2ex,5ex) node [anchor=north west][inner sep=0.75pt]  {$h$};

\end{tikzpicture}}
\title{On the topology of $\M_{0,n+1}/\Sigma_n$}
\date{}
\author{Tommaso Rossi\footnote{ tommaso.rossi118@gmail.com. This work was funded by the PhD program of the University of Roma Tor Vergata and by the MIUR Excellence Department Project awarded to the Department of Mathematics,
University of Roma Tor Vergata, CUP E83C18000100006.}\\ 
\footnotesize Dipartimento di Matematica, Università di Roma Tor Vergata}
\begin{document}

\maketitle
\begin{abstract}
This paper contains some results about the topology of $\M_{0,n+1}/\Sigma_n$, where $\M_{0,n+1}$ is the moduli space of genus zero Riemann surfaces with marked points. We show that $\M_{0,n+1}/\Sigma_n$ is not a topological manifold for $n\geq 4$, and it is simply connected for any $n\in\N$. We also present some homology computations: for example we show that $\M_{0,p+1}/\Sigma_p$ has no $p$ torsion, where $p$ is a prime. Lastly we compute $H_*(\M_{0,n+1}/\Sigma_n;\Z)$ for small values of $n$, proving that $\M_{0,n+1}/\Sigma_n$ is contractible for $n\leq 5$ while $\M_{0,7}/\Sigma_6$ is not. 
\end{abstract}

\tableofcontents

 \section{Introduction}
Let $\M_{0,n+1}$ be the moduli space of genus zero Riemann surfaces with $n+1$ marked points. In this paper we study the topology of the quotients $\mathcal{M}_{0,n+1}/\Sigma_n$. Rationally $\mathcal{M}_{0,n+1}/\Sigma_n$ has the homology of the point (as one can deduce from \cite{Getzler}), so its homology is composed by torsion classes only. In this paper we get some results on the torsion of $H_*(\mathcal{M}_{0,n+1}/\Sigma_n;\Z)$, together with other topological informations about the fundamental group and the orbifold structure. The main results are the following:
\begin{itemize}
    \item $\mathcal{M}_{0,n+1}/\Sigma_n$ is not a topological manifold for $n\geq 4$ (Theorem \ref{thm: quozienti stretti non sono varietà topologiche}).
    \item $\overline{\mathcal{M}}_{0,n+1}/\Sigma_n$ and $\mathcal{M}_{0,n+1}/\Sigma_n$ are simply connected (Theorems \ref{thm: compattificati sono semplicemente connessi} and \ref{thm: quozienti stretti sono semplicemente connessi}).
    \item $\mathcal{M}_{0,p+1}/\Sigma_p$ and $\mathcal{M}_{0,p+2}/\Sigma_{p+1}$ have no $p$-torsion (Theorem \ref{thm: quoziente stretto di p punti non ha omologia modulo p}), where $p$ is a prime number.
    \item $\mathcal{M}_{0,n+1}/\Sigma_n$ is contractible for $n\leq 5$ ( Paragraph \ref{sec:examples}).
\end{itemize}
Here is the outline of the paper:
\begin{description}
    \item[Section \ref{sec:topology of strict quotients}] presents an embedding of $\M_{0,n+1}/\Sigma_n$ into the weighted projective space $\mathbb{P}(n,n-1,\dots,2)$ as an open dense subset. This is crucial to prove that $\mathcal{M}_{0,n+1}/\Sigma_n$ is not a topological manifold if $n\geq 4$ (Theorem \ref{thm: quozienti stretti non sono varietà topologiche}).
    \item[Section \ref{sec: a combinatorial model using cacti}] contains a cambinatorial model for $\M_{0,n+1}/\Sigma_n$ based on cacti. This is useful to do computations when $n$ is small. 
    \item[Section \ref{sec:fundamental group and rational homology}] deals with the computation of the fundamental group of $\mathcal{M}_{0,n+1}/\Sigma_n$ and $\overline{\mathcal{M}}_{0,n+1}/\Sigma_n$.
    \item[Section \ref{sec:torsion of strict quotients}] contains some results about $H_*(\M_{0,n+1}/\Sigma_n;\F_p)$, where $p$ is a prime. We give an upper bound for the order of a class in $H_*(\M_{0,n+1}/\Sigma_n;\Z)$ (Theorem \ref{thm: bound sulla torsione dei quozienti}). Then we construct a long exact sequence (Proposition \ref{prop: Mayer-Vietoris}) that expresses $H^*(X/S^1;\F_p)$ in terms of $H^*_{S^1}(X;\F_p)$, $H^*_{S^1}(X^{\Z/p};\F_p)$ and $H^*(X^{\Z/p}/S^1;\F_p)$, where $X$ is any suitably nice $S^1$-space. This sequence is an easy consequence of classical facts about transformation groups. Since $\M_{0,n+1}/\Sigma_n$ turns out to be homotopy equivalent to $C_n(\C)/S^1$, where $C_n(\C)$ is the unordered configuration space of point in the plane, we will use the long exact sequence of Proposition \ref{prop: Mayer-Vietoris} to get some information about $H_*(\M_{0,n+1}/\Sigma_n;\F_p)\cong H_*(C_n(\C)/S^1;\F_p)$. In particular we compute $H_*(\M_{0,n+1}/\Sigma_n;\F_p)$ when $n\neq 0,1$ mod $p$, and we prove that there is no $p$ torsion in $\M_{0,p+1}/\Sigma_p$ and $\M_{0,p+2}/\Sigma_{p+1}$ . We also compute $H_*(\M_{0,n+1}/\Sigma_n;\Z)$ for small values of $n$, and this proves that $\M_{0,n+1}/\Sigma_n$ is contractible for any $n\leq 5$, while it has non trivial homology when $n=6$. 
    \item [Appendix \ref{app: omologia spazi di configurazione etichettati }] is just a recollection on the homology of $C_n(\C^*)$, which is a consequence of the work of F. Cohen \cite{Cohen}. These results are used in Section \ref{sec:torsion of strict quotients}.
\end{description}
\paragraph{Notations and conventions:} if $X$ is a topological space we will indicate by $F_n(X)$ (resp. $C_n(X)$) the ordered (resp. unordered) configuration space.
\paragraph{Acknowledgments:} This project is part of the author's PhD thesis, funded by the PhD program of the University of Roma Tor Vergata and by the MIUR Excellence Project MatMod@TOV awarded to the
Department of Mathematics, University of Roma Tor Vergata,  CUP E83C18000100006. I can not omit the invaluable help that my advisor Paolo Salvatore gave to me during the PhD. I am grateful to him for the patience and kindness he showed every time we discussed about this project, and for sharing with me many good ideas about it. Without his guidance and support this paper would have not be possible. I am also grateful to the anonymous referee for many suggestions that improved the readability of this paper.

\section{Orbifold structure of $\mathcal{M}_{0,n+1}/\Sigma_n$}\label{sec:topology of strict quotients}
In this Section we show that $\M_{0,n+1}/\Sigma_n$ is not a topological manifold for $n\geq 4$. For completeness we also discuss the case $n=3$. 
	\begin{prop}
		$\M_{0,4}/\Sigma_3$ is homeomorphic to $S^2-\{pt\}$.
	\end{prop}
	\begin{proof}
	Consider the Deligne-Mumford compactification $\overline{\M}_{0,4}$. It is a compact Riemann surface, which is well known to be homeomorphic to $S^2$. Since $\Sigma_3$ acts faithfully and by biholomorphisms we get that the quotient $\overline{\M}_{0,4}/\Sigma_3$ is again a Riemann surface. By the Riemann-Hurwitz formula $\overline{\M}_{0,4}/\Sigma_3$ has genus zero, so it is homeomorphic to $S^2$. Now observe that $\M_{0,4}$ is $\overline{\M}_{0,4}$ minus three points (corresponding to the three stable curves). These points are all identified in $\overline{\M}_{0,4}/\Sigma_3$, so $\M_{0,4}/\Sigma_3$ is $\overline{\M}_{0,4}/\Sigma_3$ minus a point, as claimed.
	\end{proof}
	We now focus on $\M_{0,n+1}/\Sigma_n$ when $n\geq 4$.
\begin{oss}
	$\M_{0,n+1}$ is homeomorphic to $F_n(\C)/\C\rtimes \C^*$: a point in $\M_{0,n+1}$ is just a configuration of points $(p_0,p_1,\dots,p_n)$ in the Riemann sphere up to biholomorphisms. Up to rotations we can suppose that $p_0$ is the point at the infinity $\infty\in \C\cup\{\infty\}$. Deleting this point and using the stereographic projection we obtain a configuration of $n$ points in the complex plane $\numberset{C}$ up to translation, dilatations and rotations, as claimed. From now on we will identify freely $\M_{0,n+1}$ with $F_n(\C)/\C\rtimes \C^*$. 
\end{oss} 
\begin{prop}\label{prop:quotient as unordered configuration}
There is a homeomorphism 
\begin{align*}
    \phi:\frac{\mathcal{M}_{0,n+1}}{\Sigma_n}&\to\frac{C_n(\C)}{\C\rtimes\C^*}\\
  [[z_1,\dots,z_n]]&\mapsto[\{z_1,\dots,z_n\}]
\end{align*}
  
where $[[z_1.\dots,z_n]]$ is the class associated to $[(z_1,\dots,z_n)]\in \mathcal{M}_{0,n+1}\cong F_n(\C)/\C\rtimes\C^*$.
\end{prop}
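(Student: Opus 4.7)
The approach is to recognize both quotients as arising from the same combined group action on $F_n(\C)$, just taken in different orders.

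First I would observe that $F_n(\C)$ carries two commuting actions: the symmetric group $\Sigma_n$ permutes the coordinates $(z_1,\dots,z_n)$, while the affine group $\C\rtimes\C^*$ acts by $(z_1,\dots,z_n)\mapsto(az_1+b,\dots,az_n+b)$ with $a\in\C^*$, $b\in\C$. Since the affine transformation does not depend on the labeling of the points, these two actions obviously commute, so they combine into a single action of the product group $\Sigma_n\times(\C\rtimes\C^*)$ on $F_n(\C)$.

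Next I would invoke the standard fact that for commuting actions of topological groups $G,H$ on a space $X$, the iterated quotients $(X/G)/H$ and $(X/H)/G$ are canonically homeomorphic to $X/(G\times H)$: both are obtained by identifying the same orbits, and the natural bijections are continuous by the universal property of the quotient topology (a continuous equivariant map $X\to X/(G\times H)$ factors uniquely through each of the intermediate quotients). Applying this with $G=\C\rtimes\C^*$, $H=\Sigma_n$, and $X=F_n(\C)$ gives
\[
\frac{\M_{0,n+1}}{\Sigma_n}\;\cong\;\frac{F_n(\C)/(\C\rtimes\C^*)}{\Sigma_n}\;\cong\;\frac{F_n(\C)}{\Sigma_n\times(\C\rtimes\C^*)}\;\cong\;\frac{F_n(\C)/\Sigma_n}{\C\rtimes\C^*}\;=\;\frac{C_n(\C)}{\C\rtimes\C^*}.
\]
A direct check shows that the composite homeomorphism is exactly the map $\phi$ described in the statement, since on the level of representatives in $F_n(\C)$ it is induced by the identity.

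Since essentially everything follows formally from the universal property of quotients, there is no real obstacle. The only point one must be careful about is confirming that the canonical continuous bijections $(X/G)/H\to X/(G\times H)$ are in fact homeomorphisms; this is immediate because both spaces carry the quotient topology from $X$ with respect to the same equivalence relation, so they are literally the same topological space. The formula for $\phi$ and its inverse then reads off from unfolding the identifications.
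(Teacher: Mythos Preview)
Your proof is correct and follows essentially the same approach as the paper: both argue that the actions of $\Sigma_n$ and $\C\rtimes\C^*$ on $F_n(\C)$ commute, so the iterated quotients in either order coincide. Your version is more detailed in justifying why the canonical bijection is a homeomorphism, but the underlying idea is identical.
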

\begin{proof}
$\Sigma_n$ acts on $F_n(\C)$ by permuting the coordinates, and this action commutes with that of $\C\rtimes \C^*$ by translations, rotations and dilations. Therefore the quotients $(F_n(\C)/\C\rtimes \C^*)/\Sigma_n=\M_{0,n+1}/\Sigma_n$ and $(F_n(\C)/\Sigma_n)/\C\rtimes \C^*=C_n(\C)/\Sigma_n$ are homeomorphic.

\end{proof}
\begin{oss}\label{oss: trucco dei prodotti simmetrici}
The unordered configuration space is naturally a subspace of the symmetric power $SP^n(\C)$ which is homeomorphic to $\C^n$. The homeomorphism maps an unordered $n$-uple $\{z_1,\dots,z_n\}$ to the coefficients $(a_0,\dots,a_{n-1})$ of the unique monic polinomial $a_0+a_1z+\dots+a_{n-1}z^{n-1}+z^n=(z-z_1)\cdots(z-z_n)$ which have $\{z_1,\dots,z_n\}$ as roots. 
\end{oss}

\begin{defn}
If $\mathbf{z}\coloneqq(z_1,\dots,z_n)$ is a $n$-uple of complex numbers, its \textbf{barycenter} is 
\[
B(\mathbf{z})\coloneqq \frac{z_1+\dots+z_n}{n}
\]
\end{defn}
Before we get into the topology of $\M_{0,n+1}/\Sigma_n$ we recall the definitions of weighted projective space and lens complex, just to fix the notation.
\begin{defn}
    Let $(b_0,\dots,b_n)$ be a $(n+1)$-tuple of positive integers. The \textbf{weighted projective space} (of weights $(b_0,\dots,b_n)$) is defined as 
    \[
    \mathbb{P}(b_0,\dots,b_n)\coloneqq \C^{n+1}-\{0\}/\sim
    \]
    where $(z_0,\dots,z_n)\sim (t^{b_0}z_0,\dots,t^{b_n}z_n)$ for any $t\in\C^*$.
\end{defn}
\begin{defn}
    Let $(b_0,\dots,b_n)$ be a $(n+1)$-tuple of positive integers and $\zeta\coloneqq e^{2\pi i/b_n}$. The \textbf{lens complex} is defined as  
    \[
    L(b_n;b_0,\dots,b_{n-1})\coloneqq S^{2n-1}/\sim
    \]
    where $(z_0,\dots,z_{n-1})\sim (\zeta^{b_0}z_0,\dots,\zeta^{b_{n-1}}z_{n-1})$.
\end{defn}
\begin{prop}\label{prop:omeo con il proiettivo pesato}
There is an homeomorphism 
\begin{align*}
  \psi:\frac{SP^n(\C)-\Delta}{\C\rtimes \C^*}&\to \mathbb{P}(n,n-1,\dots,2)  \\
  [\{z_1,\dots,z_n\}]&\mapsto [a_0:\dots:a_{n-2}]
\end{align*}
where $(a_0,\dots,a_{n-2})$ are the coefficients of  $(z-z_1+B(\mathbf{z}))\cdots(z-z_n+B(\mathbf{z}))$ and $\Delta\coloneqq\{\{z,\dots,z\}\in SP^n(\C)\mid z\in\C\}$ is the diagonal of $SP^n(\C)$.

\end{prop}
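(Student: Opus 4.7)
The plan is to realise $\psi$ as a composition of two successive quotient maps, corresponding to the short exact sequence $1\to\C\to\C\rtimes\C^*\to\C^*\to 1$. Using Remark \ref{oss: trucco dei prodotti simmetrici}, I identify $SP^n(\C)$ with $\C^n$ via the coefficients of the monic polynomial $\prod_{i=1}^n(z-z_i)=z^n+c_{n-1}z^{n-1}+\dots+c_0$; under this identification the small diagonal $\Delta$ becomes the one-dimensional subvariety $\{(z-a)^n : a\in\C\}$ and $c_{n-1}=-n\,B(\mathbf{z})$.

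First I analyse the translation subgroup $\C\triangleleft\C\rtimes\C^*$. The substitution $z\mapsto z-b$ sends $p(z)$ to $p(z-b)$, whose coefficients depend polynomially on $b$ and the $c_j$'s; in particular $c_{n-1}$ becomes $c_{n-1}-nb$, so the $\C$-action is free on all of $SP^n(\C)$. Taking $b=-B(\mathbf{z})$ produces the centered polynomial $(z-z_1+B(\mathbf{z}))\cdots(z-z_n+B(\mathbf{z}))$ appearing in the statement, the unique representative of each $\C$-orbit lying in the affine slice $\{c_{n-1}=0\}$. Because this slice is a continuous global section, the translation action trivialises and we get a homeomorphism
\[
(SP^n(\C)-\Delta)/\C \;\xrightarrow{\sim}\; \C^{n-1}-\{0\},
\]
where the coordinates on the target are exactly $(a_0,\dots,a_{n-2})$; the exclusion of $\Delta$ matches the exclusion of the origin, since $(z-a)^n$ centers to $z^n$.

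Next I analyse the induced $\C^*$-action. Dilating all roots by $t\in\C^*$ multiplies the $k$-th elementary symmetric function by $t^k$, so the coefficient $a_j$ of $z^j$ (with $j=n-k$) transforms as $a_j\mapsto t^{n-j}a_j$. For $j=0,1,\dots,n-2$ the resulting weights are $n,n-1,\dots,2$, so by definition
\[
(\C^{n-1}-\{0\})/\C^* = \mathbb{P}(n,n-1,\dots,2).
\]
Combining the two steps, and using that $\C$ is normal in $\C\rtimes\C^*$, gives a homeomorphism $(SP^n(\C)-\Delta)/(\C\rtimes\C^*)\cong \mathbb{P}(n,n-1,\dots,2)$; unwinding the identifications shows that this composite is exactly the $\psi$ of the statement.

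The only genuinely delicate point is promoting the continuous bijection to a homeomorphism: this is what the explicit slice $\{c_{n-1}=0\}$ buys us, trivialising the free translation action and reducing the claim to the tautological quotient by the weighted $\C^*$-action. The main bookkeeping step, if I were to write out full details, would be to verify that $(SP^n(\C)-\Delta)/(\C\rtimes\C^*)$ indeed coincides with the iterated quotient $((SP^n(\C)-\Delta)/\C)/\C^*$, which follows from general principles for quotients by semidirect products of groups acting freely and properly.
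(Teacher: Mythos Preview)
Your proof is correct and follows essentially the same route as the paper: both center the configuration to kill the translation action (landing in the hyperplane $\{c_{n-1}=0\}\cong\C^{n-1}$) and then recognise the residual $\C^*$-quotient as the weighted projective space with weights $n,n-1,\dots,2$. The only stylistic difference is in how the continuous bijection is upgraded to a homeomorphism: you use the explicit global slice $\{c_{n-1}=0\}$ to trivialise the translation action and then invoke the tautological quotient map to $\mathbb{P}(n,\dots,2)$, whereas the paper constructs $\psi$ directly and appeals to the source being compact and the target Hausdorff. Your slice argument is arguably cleaner here, since it makes the openness of $\psi$ automatic rather than requiring a separate compactness check on the source.
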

\begin{oss}\label{oss: coefficiente 0} 
The coefficient $a_{n-1}$ of $(z-z_1+B(\mathbf{z}))\cdots(z-z_n+B(\mathbf{z}))$ is $0$, indeed
 \begin{align*}
     a_{n-1}&=-z_1+B(\mathbf{z})+\dots-z_n+B(\mathbf{z})=-(z_1+\dots+z_n)+nB(\mathbf{z})=0
 \end{align*}
\end{oss}
\begin{proof}
Consider the map
\begin{align*}
    f:SP^n(\C)&\to SP^n(\C)\\
    \{z_1,\dots,z_n\}&\mapsto \{z_1-B(\mathbf{z}),\dots,z_n-B(\mathbf{z})\}
\end{align*}
whose image consists of the unordered $n$-uples whose barycenter is the origin. By Remark \ref{oss: trucco dei prodotti simmetrici} there is an homeomorphism $g:SP^n(\C)\to\C^n$ which sends a set of $n$ complex numbers $\{z_1,\dots,z_n\}$ to the coefficients $(a_0,\dots,a_{n-1})$ of the monic polynomial $(z-z_1)\cdots(z-z_n)$. The composite $g\circ f$ maps $\{z_1,\dots,z_n\}$ to the coefficients of the monic polynomial $(z-z_1+B(\mathbf{z}))\cdots(z-z_n+B(\mathbf{z}))$. By Remark \ref{oss: coefficiente 0} the image of $g\circ f$ is $\{(a_0,\dots,a_{n-2},0)\mid a_i\in\C\}\cong\C^{n-1}$ therefore we get a map
\[
SP^n(\C)-\Delta \xrightarrow{g\circ f} \C^{n-1}-\{0\}\to \mathbb{P}(n,n-1,\dots,2)
\]
where the last map is the natural projection $\C^{n-1}-\{0\}\to\mathbb{P}(n,n-1,\dots,2)$. It is easy to show that this map is constant on the $\C\rtimes\C^*$ orbits, so it induces a bijective map 
\[
  \psi:\frac{SP^n(\C)-\Delta}{\C\rtimes \C^*}\to \mathbb{P}(n,n-1,\dots,2)  
\]
which is an homeomorphism because the source is compact and the target is Hausdorff.
\end{proof}
\begin{corollario}\label{cor:omeo con il proiettivo pesato}
 The composition $\psi\circ \phi:\M_{0,n+1}/\Sigma_n\to \mathbb{P}(n,n-1,\dots,2)$ is an embedding.
\end{corollario}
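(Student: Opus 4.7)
The plan is to compose the two homeomorphisms that have just been established. By Proposition~\ref{prop:quotient as unordered configuration}, $\phi$ is a homeomorphism $\M_{0,n+1}/\Sigma_n \xrightarrow{\sim} C_n(\C)/(\C\rtimes\C^*)$; by Proposition~\ref{prop:omeo con il proiettivo pesato}, $\psi$ is a homeomorphism from $(SP^n(\C)-\Delta)/(\C\rtimes\C^*)$ onto (its image in) $\mathbb{P}(n,n-1,\dots,2)$. The composition of two maps that are homeomorphisms onto their images is itself a homeomorphism onto its image, which is precisely the definition of an embedding.

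The only point that requires explicit verification is that the target of $\phi$ matches the source of $\psi$. I would simply unfold definitions: the unordered configuration space $C_n(\C) = F_n(\C)/\Sigma_n$ is the locus of unordered $n$-tuples of pairwise distinct points, which is exactly the complement $SP^n(\C) - \Delta$ of the diagonal in the symmetric product. The $\C\rtimes\C^*$-actions (by translations, rotations and dilations) on the two models agree under this identification, so the quotients $C_n(\C)/(\C\rtimes\C^*)$ and $(SP^n(\C)-\Delta)/(\C\rtimes\C^*)$ coincide as topological spaces and $\psi\circ\phi$ is well defined as a single continuous injection.

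There is no serious obstacle: the corollary is essentially a repackaging of the two preceding propositions. As a byproduct one reads off the further information announced in the introduction, namely that the image of $\psi\circ\phi$ sits inside $\mathbb{P}(n,n-1,\dots,2)$ as an open dense subset: open, because $C_n(\C)$ is open in $SP^n(\C)$ and taking the quotient by the continuous $\C\rtimes\C^*$-action preserves openness; dense, because the diagonal $\Delta$ has strictly smaller complex dimension than $SP^n(\C)$. This is the form of the statement that will be used in the next paragraph to rule out the manifold property of $\M_{0,n+1}/\Sigma_n$ and $\overline{\M}_{0,n+1}/\Sigma_n$ for $n\geq 4$.
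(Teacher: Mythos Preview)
Your argument is correct and matches the paper's one-line proof (``Just combine Proposition~\ref{prop:quotient as unordered configuration} and Proposition~\ref{prop:omeo con il proiettivo pesato}''), with the added bonus that you spell out the openness and density of the image.

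One small correction: in the paper's Proposition~\ref{prop:omeo con il proiettivo pesato}, $\Delta\subset SP^n(\C)$ denotes the \emph{small} diagonal (all $n$ points equal), not the fat diagonal, as one sees from the proof there (the map $g\circ f$ hits $0\in\C^{n-1}$ precisely when all $z_i$ coincide). Hence $C_n(\C)$ is a proper open subset of $SP^n(\C)-\Delta$, not equal to it, and $\psi$ is a homeomorphism onto \emph{all} of $\mathbb{P}(n,n-1,\dots,2)$. This does not affect your conclusion: the composition is then (homeomorphism)$\circ$(open inclusion)$\circ$(homeomorphism), still an open embedding.
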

\begin{proof}
Just combine Proposition \ref{prop:quotient as unordered configuration} and Proposition \ref{prop:omeo con il proiettivo pesato}.
\end{proof}

\begin{thm}\label{thm: quozienti stretti non sono varietà topologiche}
$\mathcal{M}_{0,n+1}/\Sigma_n$ is not a topological manifold for any $n\geq 4$. 
\end{thm}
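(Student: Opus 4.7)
The plan is to leverage the embedding $\psi\circ\phi\colon \mathcal{M}_{0,n+1}/\Sigma_n \hookrightarrow \mathbb{P}(n,n-1,\dots,2)$ from Corollary \ref{cor:omeo con il proiettivo pesato}, together with the fact that $\mathcal{M}_{0,n+1}/\Sigma_n$ is open in $\overline{\mathcal{M}}_{0,n+1}/\Sigma_n$. Since being a topological manifold is a local property stable under passage to open subsets, it suffices to exhibit a single point in the image of $\mathcal{M}_{0,n+1}/\Sigma_n$ at which $\mathbb{P}(n,n-1,\dots,2)$ fails to be locally Euclidean.

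For every $n\geq 4$, at least one of $n$ or $n-1$ has an odd prime divisor $p$, since two consecutive integers $\geq 3$ cannot both be powers of $2$. Using Proposition \ref{prop:quotient as unordered configuration}, I would take $C$ to be a generic configuration of $n$ distinct points of $\C$ with barycentre $0$ and exact $\mu_p$-rotational symmetry: if $p\mid n$, take $n/p$ generic $\mu_p$-orbits of size $p$; if $p\mid n-1$, take $(n-1)/p$ such orbits together with the origin as an additional fixed point. Genericity makes the stabiliser of $[C]$ in $\C^*$ exactly $\mu_p$, and $[C]$ lands in the image of $\mathcal{M}_{0,n+1}/\Sigma_n$ because the $n$ points are distinct.

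By the slice theorem for finite group actions, a neighbourhood of $[C]$ in $\mathbb{P}(n,n-1,\dots,2)$ is identified with a neighbourhood of the origin in $\C^{n-2}/\mu_p$, where $\mu_p$ acts linearly on $\C^{n-2}$ with weights $n-i \pmod{p}$ on the coordinates $a_i$ not used to normalise the $\C^*$-action. These weights split $\C^{n-2}$ as $\C^a\oplus\C^b$, with $\mu_p$ trivial on $\C^a$ and with only non-zero characters on $\C^b$; in particular $\mu_p$ acts freely on $\C^b\setminus\{0\}$. A short count of indices, using $p\geq 3$ and $n\geq 4$, gives $b\geq 2$, so the local model is $\C^a\times(\C^b/\mu_p)$ and $\C^b/\mu_p$ is a cone on the lens space $L=L(p;w_1,\dots,w_b)$.

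The link of the origin in this local model is therefore the join $S^{2a-1}\ast L$, whose reduced integral homology in degree $2a+1$ is $\widetilde H_1(L;\Z)=\Z/p$ by the K\"unneth-type formula for joins; since $b\geq 2$ this is non-zero, while $\widetilde H_{2a+1}(S^{2n-5};\Z)=0$ because $2a+1<2n-5$. Hence the link is not a sphere, so $[C]$ is not a topological manifold point of $\mathbb{P}(n,n-1,\dots,2)$, and consequently neither $\mathcal{M}_{0,n+1}/\Sigma_n$ nor $\overline{\mathcal{M}}_{0,n+1}/\Sigma_n$ can be a topological manifold. The main technical step is to justify the slice description of the local model and verify the bound $b\geq 2$; once those are in hand, the homological comparison with $S^{2n-5}$ is routine.
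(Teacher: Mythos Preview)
Your argument is correct and reaches the same conclusion, but it takes a genuinely different route from the paper's proof. The paper works at the single point $[1:0:\cdots:0]\in\mathbb{P}(n,n-1,\dots,2)$ (the $n$-th roots of $-1$), whose isotropy is the full $\mu_n$; the affine chart $U_0$ is then literally $\C^{n-2}/\mu_n$, i.e.\ the cone on the lens complex $L(n;n-1,\dots,2)$, and the paper simply invokes that this lens complex is not an integral homology sphere for $n\geq 4$ (Kawasaki's computation). No case distinction, no slice theorem, no join homology is needed.

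Your approach instead manufactures, for each $n\geq4$, a configuration with isotropy exactly $\mu_p$ for an \emph{odd prime} $p$ dividing $n(n-1)$. This costs you a small case analysis and the verification $b\geq 2$, but it buys two things: the quotient $S^{2b-1}/\mu_p$ is an honest lens space (the action is free because $p$ is prime and the weights are nonzero mod $p$), so $\widetilde H_1(L;\Z)=\Z/p$ is immediate from $\pi_1(L)\cong\Z/p$; and the suspension identity $S^{2a-1}\ast L\simeq\Sigma^{2a}L$ pins down a specific local homology group that obstructs local Euclideanness, without appealing to Kawasaki. Both arguments compute the same invariant (local homology at a singular orbifold point inside the image of $\mathcal{M}_{0,n+1}/\Sigma_n$); the paper's version is shorter, while yours is more self-contained about why the link fails to be a homology sphere. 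Your phrasing ``weights $n-i\pmod p$ on the coordinates $a_i$ not used to normalise'' is slightly loose---more precisely, the $\mu_p$-representation on the slice is $\C^{n-1}$ with weights $n,n-1,\dots,2$ modulo one trivial summand coming from the orbit direction---but the subsequent splitting $\C^a\oplus\C^b$ and the count of $b$ are correct as stated.
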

\begin{proof}
Let us consider the point $p\in\mathcal{M}_{0,n+1}/\Sigma_n$ consisting of the roots of the polynomial $z^n+1$. We show that $p$ does not have an Euclidean neighborhood. By the embedding of Corollary \ref{cor:omeo con il proiettivo pesato}, this corresponds to the point $[1:0:\dots:0]$ of $\mathbb{P}(n,n-1,\dots,2)$. Since $\mathcal{M}_{0,n+1}/\Sigma_n$ is an open subset of $\mathbb{P}(n,n-1,\dots,2)$ our claim is equivalent to show that $p$ does not have an Euclidean neighborhood contained in $\mathbb{P}(n,n-1,\dots,2)$. Consider the open set 
\[
    U_0\coloneqq\{[1:a_1:\dots:a_{n-2}]\in\mathbb{P}(n,n-1\dots,2)\}
\]
If $[1:a_1:\dots:a_{n-2}]=[1:b_1:\dots:b_{n-2}]$ then exists $\lambda\in\C^*$ such that 
\[
\begin{cases}
\lambda^n=1\\
\lambda^{n-k}a_k=b_k \text{ for } k=1,\dots,n-2
\end{cases}
\]
Therefore $U_0\cong \C^{n-2}/\sim$
where $(a_1,\dots,a_{n-2})\sim(b_1,\dots,b_{n-2})$ if and only if there is a $n$-th root of unity $\lambda$ such that $\lambda^{n-k}a_k=b_k$ for all $k=1,\dots,n-2$. Therefore $U_0$ is homeomorphic to a cone on the lens complex $L(n;n-1,\dots,2)$. The point $p=[1,0,\dots,0]$ is precisely the vertex of this cone, therefore (by excision) we have 
\begin{align*}
    H_k(\mathbb{P}(n,n-1,\dots,2),\mathbb{P}(n,n-1,\dots,2)-\{p\})&=H_k(U_0,U_0-\{p\})\\
    &=H_{k-1}(U_0-\{p\}) \\
    &=H_{k-1}(L(n;n-1,\dots,2))
\end{align*}
The lens complex $L(n;n-1,\dots,2)$ is not a homology sphere if $n\geq 4$ (Kawasaki, \cite{Kawasaki}), therefore $p$ can not have a Euclidean neighborhood. 
\end{proof}

	\section{A combinatorial model using cacti}\label{sec: a combinatorial model using cacti}
The main goal of this section is to provide a combinatorial model for $\M_{0,n+1}/\Sigma_n$. The key observation is that $\M_{0,n+1}/\Sigma_n$ is homotopy equivalent to $C_n(\C)/S^1$, the quotient of the unordered configuration space by the circle action. Starting from the homotopy equivalence between the ordered configuration space $F_n(\C)$ and the space of cacti (see the work by McClure-Smith \cite{McClure-Smith1}) we will build a CW-complex which is homotopy equivalent to $C_n(\C)/S^1$.
 \subsection{Cacti}
In this paragraph we recall the definition and properties of the space of cacti. A 	combinatorial construction of this space was introduced by McClure and Smith \cite{McClure-Smith2}, while geometric constructions are due to Voronov \cite{Voronov} and Kaufmann \cite{Kaufmann}. Salvatore compared the two approaches in \cite{Salvatore2}. 
	\begin{defn}[The space of cacti]
		Let $\mathcal{C}_n$ be the set of partitions $x$ of $S^1$ into $n$ closed $1$-manifolds $I_1(x),\dots,I_n(x)$ such that:
		\begin{itemize}
			\item They have equal measure.
			\item They have pairwise disjoint interiors.
			\item  There is no cyclically ordered sequence of points $(z_1,z_2,z_3,z_4)$ in $S^1$ such that $z_1,z_3\in I_j(x)$, $z_2,z_4\in I_k(x)$ and $j\neq k$.
		\end{itemize}
		We can equip this set with a topology by defining a metric on it: for any $x,y\in \mathcal{C}_n$ we set
		\[
		d(x,y)=\sum_{j=1}^n \mu(I_j(x)-\mathring{I}_j(y))
		\]
		where $\mu$ denotes the measure. We will call $\mathcal{C}_n$ (with this topology) the \textbf{space of based cacti with $n$-lobes}. See Figure \ref{fig:esempio di cactus} for an example. 
	\end{defn}
	\begin{oss}
		$\mathcal{C}_n$ is called the space of cacti for the following reason: given $x\in\mathcal{C}_n$, let us define a relation $\sim$ on $S^1$: two points $z_1,z_2\in S^1$ are  equivalent if there is an index $j\in\{0,\dots,n\}$ such that $z_1$ and $z_2$ are the boundary points of the same connected component of $S^1-\mathring{I}_j$. The quotient space $c(x)\coloneqq S^1/\sim$ by this relation is a pointed space (the basepoint is just the image of $1\in S^1$) called the \textbf{cactus} associated to $x$: topologically it is a configuration of $n$-circles in the plane, called lobes, whose dual graph is a tree. The dual graph is a graph with two kind of vertices: a white vertex for any lobe and a black vertex for any intersection point between two lobes. An edges connects a white vertex $w$ to a black vertex $b$ if $b$ represent the intersection point of the lobe corresponding to $w$ with some other lobe. See Figure \ref{fig:esempi di cactus} for some examples. Note that the lobes are the image of the $1$-manifolds $I_1(x),\dots,I_n(x)$ under the quotient map $S^1\to S^1/\sim$. In what follows we will freely identify a partition $x\in\mathcal{C}_n$ and its associated cactus $c(x)$.
	\end{oss}
	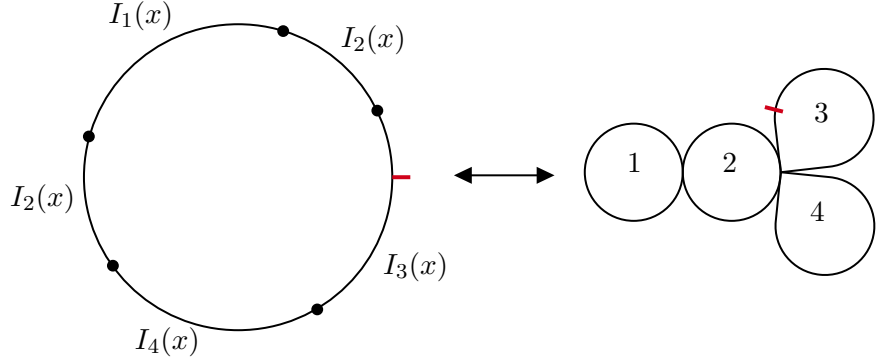
\begin{figure}
		\centering

		\tikzset{every picture/.style={line width=0.75pt}} 
		
		\begin{tikzpicture}[x=0.75pt,y=0.75pt,yscale=-1,xscale=1]
			
			\draw   (54.32,193.95) .. controls (54.32,151.45) and (88.78,117) .. (131.27,117) .. controls (173.77,117) and (208.23,151.45) .. (208.23,193.95) .. controls (208.23,236.45) and (173.77,270.9) .. (131.27,270.9) .. controls (88.78,270.9) and (54.32,236.45) .. (54.32,193.95) -- cycle ;
			\draw  [fill={rgb, 255:red, 0; green, 0; blue, 0 }  ,fill opacity=1 ] (66.32,238.45) .. controls (66.32,237.1) and (67.42,236) .. (68.77,236) .. controls (70.13,236) and (71.23,237.1) .. (71.23,238.45) .. controls (71.23,239.81) and (70.13,240.9) .. (68.77,240.9) .. controls (67.42,240.9) and (66.32,239.81) .. (66.32,238.45) -- cycle ;
			\draw  [fill={rgb, 255:red, 0; green, 0; blue, 0 }  ,fill opacity=1 ] (151.32,120.45) .. controls (151.32,119.1) and (152.42,118) .. (153.77,118) .. controls (155.13,118) and (156.23,119.1) .. (156.23,120.45) .. controls (156.23,121.81) and (155.13,122.9) .. (153.77,122.9) .. controls (152.42,122.9) and (151.32,121.81) .. (151.32,120.45) -- cycle ;
			\draw  [fill={rgb, 255:red, 0; green, 0; blue, 0 }  ,fill opacity=1 ] (198.32,160.45) .. controls (198.32,159.1) and (199.42,158) .. (200.77,158) .. controls (202.13,158) and (203.23,159.1) .. (203.23,160.45) .. controls (203.23,161.81) and (202.13,162.9) .. (200.77,162.9) .. controls (199.42,162.9) and (198.32,161.81) .. (198.32,160.45) -- cycle ;
			\draw  [fill={rgb, 255:red, 0; green, 0; blue, 0 }  ,fill opacity=1 ] (54.32,173.45) .. controls (54.32,172.1) and (55.42,171) .. (56.77,171) .. controls (58.13,171) and (59.23,172.1) .. (59.23,173.45) .. controls (59.23,174.81) and (58.13,175.9) .. (56.77,175.9) .. controls (55.42,175.9) and (54.32,174.81) .. (54.32,173.45) -- cycle ;
			\draw  [fill={rgb, 255:red, 0; green, 0; blue, 0 }  ,fill opacity=1 ] (168.32,260.45) .. controls (168.32,259.1) and (169.42,258) .. (170.77,258) .. controls (172.13,258) and (173.23,259.1) .. (173.23,260.45) .. controls (173.23,261.81) and (172.13,262.9) .. (170.77,262.9) .. controls (169.42,262.9) and (168.32,261.81) .. (168.32,260.45) -- cycle ;
			\draw [color={rgb, 255:red, 208; green, 2; blue, 27 }  ,draw opacity=1 ][line width=1.5]    (208.23,193.95) -- (217.23,193.95) ;
			\draw   (304.32,191.45) .. controls (304.32,177.95) and (315.27,167) .. (328.77,167) .. controls (342.28,167) and (353.23,177.95) .. (353.23,191.45) .. controls (353.23,204.96) and (342.28,215.9) .. (328.77,215.9) .. controls (315.27,215.9) and (304.32,204.96) .. (304.32,191.45) -- cycle ;
			\draw   (353.23,191.45) .. controls (353.23,177.95) and (364.17,167) .. (377.68,167) .. controls (391.18,167) and (402.13,177.95) .. (402.13,191.45) .. controls (402.13,204.96) and (391.18,215.9) .. (377.68,215.9) .. controls (364.17,215.9) and (353.23,204.96) .. (353.23,191.45) -- cycle ;
			\draw   (426.67,193.92) .. controls (440.22,195.29) and (450.1,207.38) .. (448.74,220.93) .. controls (447.37,234.49) and (435.28,244.37) .. (421.73,243) .. controls (408.17,241.64) and (398.29,229.54) .. (399.66,215.99) .. controls (401.31,199.63) and (402.13,191.45) .. (402.13,191.45) .. controls (402.13,191.45) and (410.31,192.28) .. (426.67,193.92) -- cycle ;
			\draw   (399.32,166.95) .. controls (397.77,153.41) and (407.49,141.19) .. (421.02,139.64) .. controls (434.56,138.09) and (446.78,147.81) .. (448.33,161.34) .. controls (449.88,174.87) and (440.16,187.1) .. (426.63,188.65) .. controls (410.3,190.52) and (402.13,191.45) .. (402.13,191.45) .. controls (402.13,191.45) and (401.19,183.28) .. (399.32,166.95) -- cycle ;
			\draw [color={rgb, 255:red, 208; green, 2; blue, 27 }  ,draw opacity=1 ][line width=1.5]    (394.23,158.68) -- (403.23,160.95) ;
			\draw    (242,193) -- (286.23,193) ;
			\draw [shift={(289.23,193)}, rotate = 180] [fill={rgb, 255:red, 0; green, 0; blue, 0 }  ][line width=0.08]  [draw opacity=0] (8.93,-4.29) -- (0,0) -- (8.93,4.29) -- cycle    ;
			\draw [shift={(239,193)}, rotate = 0] [fill={rgb, 255:red, 0; green, 0; blue, 0 }  ][line width=0.08]  [draw opacity=0] (8.93,-4.29) -- (0,0) -- (8.93,4.29) -- cycle    ;
			
			\draw (65,104) node [anchor=north west][inner sep=0.75pt]   [align=left] {$\displaystyle I_{1}( x)$};
			\draw (181,116) node [anchor=north west][inner sep=0.75pt]   [align=left] {$\displaystyle I_{2}( x)$};
			\draw (16,196) node [anchor=north west][inner sep=0.75pt]   [align=left] {$\displaystyle I_{2}( x)$};
			\draw (202.23,229.95) node [anchor=north west][inner sep=0.75pt]   [align=left] {$\displaystyle I_{3}( x)$};
			\draw (78.23,267.95) node [anchor=north west][inner sep=0.75pt]   [align=left] {$\displaystyle I_{4}( x)$};
			\draw (323.9,180.2) node [anchor=north west][inner sep=0.75pt]   [align=left] {$\displaystyle 1$};
			\draw (371.63,180.2) node [anchor=north west][inner sep=0.75pt]   [align=left] {$\displaystyle 2$};
			\draw (417.35,155.04) node [anchor=north west][inner sep=0.75pt]   [align=left] {$\displaystyle 3$};
			\draw (415.35,205.04) node [anchor=north west][inner sep=0.75pt]   [align=left] {$\displaystyle 4$};

		\end{tikzpicture}
		
		\caption{On the left there is an element $x\in\mathcal{C}_4$, on the right its associated cactus $c(x)$. The basepoint of the circle $S^1$ is depicted in red and corresponds to a basepoint on the cactus $c(x)$ (which we depict as a red spine).}
		\label{fig:esempio di cactus}
	\end{figure}
		\paragraph{Cell decomposition:} to any point $x\in \mathcal{C}_n$ we can associate a sequence $(X_1,\dots,X_l)$ of numbers in $\{1,\dots,n\}$ by the following procedure: start from the point $1\in S^1$ and  move along the circle clockwise. The sequence $(X_1,\dots,X_l)$ is obtained by writing one after the other the indices of all the $1$-manifolds you encounter and has the following properties:
	\begin{itemize}
		\item All values between $1$ and $n$ appear.
		\item $X _i\neq X_{i+1}$ for every $i=1,\dots,l-1$.
		\item There is no subsequence of the form $(i,j,i,j)$ with $i\neq j$.
	\end{itemize}
	This sequence is just a combinatorial way to encode the shape of the cactus $c(x)$. By an abuse of notation we will also denote by $(X_1,\dots,X_l)$ the subspace of $\mathcal{C}_n$ consisting of all partitions whose associated sequence is $(X_1,\dots,X_l)$. This subspace turns out to be homeomorphic to a product of simplices. More precisely, if $m_i$ is the cardinality of $\{j\in\{1,\dots,l\}\mid X_j=i\}$, $i=1,\dots,n$, then 
	\[
	(X_1,\dots,X_l)\cong \prod_{i=1}^n\Delta^{m_i-1}
	\]
	For an example look at the cactus of Figure \ref{fig:esempio di cactus}: it belongs to the cell $(3,4,2,1,2,3)\cong\Delta^0\times \Delta^1\times\Delta^1\times\Delta^0$.
	Intuitively all the cacti $c(x)$ associated to partitions $x\in(X_1,\dots,X_l)$ have the same shape. So we will represent pictorially a cell by drawing a cactus, meaning that the cell contains all the partitions $x\in\mathcal{C}_n$ whose associated cactus $c(x)$ has that shape. From this point of view, the parameters
	of a cell $(X_1,\dots,X_l)$ can be thought as the lengths of the arcs between two consecutive \emph{marked points}, where a marked point is an intersection point of lobes or the base point. The boundary of a cell is obtained by collapsing some of these arcs. See Figure \ref{fig:esempi di cactus e loro bordi} for some examples. This gives a regular CW-decomposition of   $\mathcal{C}_n$.
	\begin{figure}
		\centering

		\tikzset{every picture/.style={line width=0.75pt}} 
		
		\begin{tikzpicture}[x=0.75pt,y=0.75pt,yscale=-1,xscale=1]
			
			\draw   (321.32,82.28) .. controls (321.32,80.19) and (323.01,78.51) .. (325.1,78.51) .. controls (327.18,78.51) and (328.87,80.19) .. (328.87,82.28) .. controls (328.87,84.36) and (327.18,86.05) .. (325.1,86.05) .. controls (323.01,86.05) and (321.32,84.36) .. (321.32,82.28) -- cycle ;
			\draw   (400.41,82.28) .. controls (400.41,80.19) and (402.1,78.51) .. (404.19,78.51) .. controls (406.27,78.51) and (407.96,80.19) .. (407.96,82.28) .. controls (407.96,84.36) and (406.27,86.05) .. (404.19,86.05) .. controls (402.1,86.05) and (400.41,84.36) .. (400.41,82.28) -- cycle ;
			\draw   (479.51,82.28) .. controls (479.51,80.19) and (481.19,78.51) .. (483.28,78.51) .. controls (485.36,78.51) and (487.05,80.19) .. (487.05,82.28) .. controls (487.05,84.36) and (485.36,86.05) .. (483.28,86.05) .. controls (481.19,86.05) and (479.51,84.36) .. (479.51,82.28) -- cycle ;
			\draw  [fill={rgb, 255:red, 0; green, 0; blue, 0 }  ,fill opacity=1 ] (360.87,82.28) .. controls (360.87,80.19) and (362.56,78.51) .. (364.64,78.51) .. controls (366.73,78.51) and (368.41,80.19) .. (368.41,82.28) .. controls (368.41,84.36) and (366.73,86.05) .. (364.64,86.05) .. controls (362.56,86.05) and (360.87,84.36) .. (360.87,82.28) -- cycle ;
			\draw  [fill={rgb, 255:red, 0; green, 0; blue, 0 }  ,fill opacity=1 ] (439.96,82.28) .. controls (439.96,80.19) and (441.65,78.51) .. (443.73,78.51) .. controls (445.82,78.51) and (447.51,80.19) .. (447.51,82.28) .. controls (447.51,84.36) and (445.82,86.05) .. (443.73,86.05) .. controls (441.65,86.05) and (439.96,84.36) .. (439.96,82.28) -- cycle ;
			\draw    (328.87,82.28) -- (360.87,82.28) ;
			\draw    (368.41,82.28) -- (400.41,82.28) ;
			\draw    (407.96,82.28) -- (439.96,82.28) ;
			\draw    (447.51,82.28) -- (479.51,82.28) ;
			\draw   (169.76,82.28) .. controls (169.76,73.12) and (177.19,65.69) .. (186.35,65.69) .. controls (195.51,65.69) and (202.93,73.12) .. (202.93,82.28) .. controls (202.93,91.44) and (195.51,98.86) .. (186.35,98.86) .. controls (177.19,98.86) and (169.76,91.44) .. (169.76,82.28) -- cycle ;
			\draw   (202.93,82.28) .. controls (202.93,73.12) and (210.35,65.69) .. (219.51,65.69) .. controls (228.67,65.69) and (236.1,73.12) .. (236.1,82.28) .. controls (236.1,91.44) and (228.67,98.86) .. (219.51,98.86) .. controls (210.35,98.86) and (202.93,91.44) .. (202.93,82.28) -- cycle ;
			\draw   (136.6,82.28) .. controls (136.6,73.12) and (144.02,65.69) .. (153.18,65.69) .. controls (162.34,65.69) and (169.76,73.12) .. (169.76,82.28) .. controls (169.76,91.44) and (162.34,98.86) .. (153.18,98.86) .. controls (144.02,98.86) and (136.6,91.44) .. (136.6,82.28) -- cycle ;
			\draw    (275,82) -- (307.17,82) ;
			\draw [shift={(309.17,82)}, rotate = 180] [color={rgb, 255:red, 0; green, 0; blue, 0 }  ][line width=0.75]    (10.93,-3.29) .. controls (6.95,-1.4) and (3.31,-0.3) .. (0,0) .. controls (3.31,0.3) and (6.95,1.4) .. (10.93,3.29)   ;
			\draw    (275,82) -- (248.17,82) ;
			\draw [shift={(246.17,82)}, rotate = 360] [color={rgb, 255:red, 0; green, 0; blue, 0 }  ][line width=0.75]    (10.93,-3.29) .. controls (6.95,-1.4) and (3.31,-0.3) .. (0,0) .. controls (3.31,0.3) and (6.95,1.4) .. (10.93,3.29)   ;
			\draw   (171.32,149.23) .. controls (164.23,150.66) and (157.28,145.89) .. (155.81,138.58) .. controls (154.33,131.28) and (158.89,124.19) .. (165.98,122.76) .. controls (173.07,121.33) and (180.02,126.09) .. (181.49,133.4) .. controls (183.27,142.22) and (184.16,146.63) .. (184.16,146.63) .. controls (184.16,146.63) and (179.88,147.5) .. (171.32,149.23) -- cycle ;
			\draw   (187.72,134.03) .. controls (187.72,134.03) and (187.72,134.03) .. (187.72,134.03) .. controls (189.68,127.06) and (197.09,123.06) .. (204.27,125.08) .. controls (211.44,127.11) and (215.67,134.39) .. (213.7,141.36) .. controls (211.74,148.32) and (204.33,152.32) .. (197.15,150.3) .. controls (188.49,147.86) and (184.16,146.63) .. (184.16,146.63) .. controls (184.16,146.63) and (185.35,142.43) .. (187.72,134.03) -- cycle ;
			\draw   (193.57,155.74) .. controls (193.57,155.74) and (193.57,155.74) .. (193.57,155.74) .. controls (193.57,155.74) and (193.57,155.74) .. (193.57,155.74) .. controls (198.77,160.78) and (198.79,169.2) .. (193.6,174.56) .. controls (188.42,179.91) and (180,180.18) .. (174.8,175.15) .. controls (169.6,170.12) and (169.59,161.69) .. (174.77,156.34) .. controls (181.04,149.87) and (184.17,146.64) .. (184.16,146.63) .. controls (184.17,146.64) and (187.3,149.67) .. (193.57,155.74) -- cycle ;
			\draw   (346.32,144.28) .. controls (346.32,142.19) and (348.01,140.51) .. (350.1,140.51) .. controls (352.18,140.51) and (353.87,142.19) .. (353.87,144.28) .. controls (353.87,146.36) and (352.18,148.05) .. (350.1,148.05) .. controls (348.01,148.05) and (346.32,146.36) .. (346.32,144.28) -- cycle ;
			\draw   (414.94,174.74) .. controls (413.42,173.32) and (413.35,170.93) .. (414.77,169.41) .. controls (416.2,167.89) and (418.59,167.81) .. (420.11,169.24) .. controls (421.62,170.67) and (421.7,173.06) .. (420.27,174.57) .. controls (418.85,176.09) and (416.46,176.17) .. (414.94,174.74) -- cycle ;
			\draw   (420.23,118.96) .. controls (418.77,120.45) and (416.38,120.48) .. (414.89,119.03) .. controls (413.4,117.57) and (413.37,115.19) .. (414.82,113.69) .. controls (416.27,112.2) and (418.66,112.17) .. (420.15,113.62) .. controls (421.65,115.08) and (421.68,117.46) .. (420.23,118.96) -- cycle ;
			\draw  [fill={rgb, 255:red, 0; green, 0; blue, 0 }  ,fill opacity=1 ] (385.87,144.28) .. controls (385.87,142.19) and (387.56,140.51) .. (389.64,140.51) .. controls (391.73,140.51) and (393.41,142.19) .. (393.41,144.28) .. controls (393.41,146.36) and (391.73,148.05) .. (389.64,148.05) .. controls (387.56,148.05) and (385.87,146.36) .. (385.87,144.28) -- cycle ;
			\draw    (353.87,144.28) -- (385.87,144.28) ;
			\draw    (414.77,169.41) -- (389.64,144.28) ;
			\draw    (389.64,144.28) -- (414.89,119.03) ;
			\draw    (275,143) -- (307.17,143) ;
			\draw [shift={(309.17,143)}, rotate = 180] [color={rgb, 255:red, 0; green, 0; blue, 0 }  ][line width=0.75]    (10.93,-3.29) .. controls (6.95,-1.4) and (3.31,-0.3) .. (0,0) .. controls (3.31,0.3) and (6.95,1.4) .. (10.93,3.29)   ;
			\draw    (275,143) -- (248.17,143) ;
			\draw [shift={(246.17,143)}, rotate = 360] [color={rgb, 255:red, 0; green, 0; blue, 0 }  ][line width=0.75]    (10.93,-3.29) .. controls (6.95,-1.4) and (3.31,-0.3) .. (0,0) .. controls (3.31,0.3) and (6.95,1.4) .. (10.93,3.29)   ;
			\draw (320,61) node [anchor=north west][inner sep=0.75pt]   [align=left] {$\displaystyle 1$};
			\draw (399,61) node [anchor=north west][inner sep=0.75pt]   [align=left] {$\displaystyle 2$};
			\draw (478.52,60.99) node [anchor=north west][inner sep=0.75pt]   [align=left] {$\displaystyle 3$};
			\draw (149,75) node [anchor=north west][inner sep=0.75pt]   [align=left] {$\displaystyle 1$};
			\draw (181,75) node [anchor=north west][inner sep=0.75pt]   [align=left] {$\displaystyle 2$};
			\draw (214,75) node [anchor=north west][inner sep=0.75pt]   [align=left] {$\displaystyle 3$};
			\draw (179,154.46) node [anchor=north west][inner sep=0.75pt]   [align=left] {$\displaystyle 3$};
			\draw (163,129.46) node [anchor=north west][inner sep=0.75pt]   [align=left] {$\displaystyle 1$};
			\draw (195,129.46) node [anchor=north west][inner sep=0.75pt]   [align=left] {$\displaystyle 2$};
			\draw (335,135) node [anchor=north west][inner sep=0.75pt]   [align=left] {$\displaystyle 1$};
			\draw (421,97) node [anchor=north west][inner sep=0.75pt]   [align=left] {$\displaystyle 2$};
			\draw (419.52,174.99) node [anchor=north west][inner sep=0.75pt]   [align=left] {$\displaystyle 3$};

		\end{tikzpicture}
		
		\caption{On the left there are some cacti (without basepoint), on the right the corresponding dual graphs.}
		\label{fig:esempi di cactus}
	\end{figure}
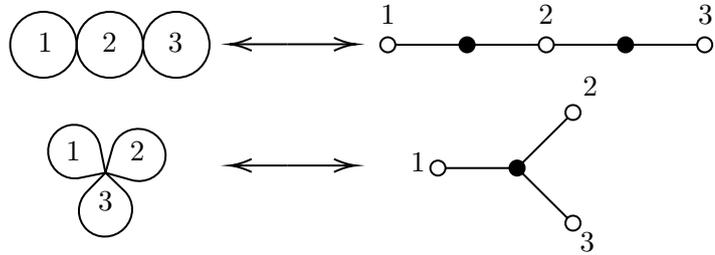
	
	\begin{figure}
		\centering

		\tikzset{every picture/.style={line width=0.75pt}} 
		
		\begin{tikzpicture}[x=0.75pt,y=0.75pt,yscale=-1,xscale=1]
			
			\draw   (64.76,278.28) .. controls (64.76,269.12) and (72.19,261.69) .. (81.35,261.69) .. controls (90.51,261.69) and (97.93,269.12) .. (97.93,278.28) .. controls (97.93,287.44) and (90.51,294.86) .. (81.35,294.86) .. controls (72.19,294.86) and (64.76,287.44) .. (64.76,278.28) -- cycle ;
			\draw   (97.93,278.28) .. controls (97.93,269.12) and (105.35,261.69) .. (114.51,261.69) .. controls (123.67,261.69) and (131.1,269.12) .. (131.1,278.28) .. controls (131.1,287.44) and (123.67,294.86) .. (114.51,294.86) .. controls (105.35,294.86) and (97.93,287.44) .. (97.93,278.28) -- cycle ;
			\draw   (31.6,278.28) .. controls (31.6,269.12) and (39.02,261.69) .. (48.18,261.69) .. controls (57.34,261.69) and (64.76,269.12) .. (64.76,278.28) .. controls (64.76,287.44) and (57.34,294.86) .. (48.18,294.86) .. controls (39.02,294.86) and (31.6,287.44) .. (31.6,278.28) -- cycle ;
			\draw   (235.32,330.23) .. controls (228.23,331.66) and (221.28,326.89) .. (219.81,319.58) .. controls (218.33,312.28) and (222.89,305.19) .. (229.98,303.76) .. controls (237.07,302.33) and (244.02,307.09) .. (245.49,314.4) .. controls (247.27,323.22) and (248.16,327.63) .. (248.16,327.63) .. controls (248.16,327.63) and (243.88,328.5) .. (235.32,330.23) -- cycle ;
			\draw   (251.72,315.03) .. controls (253.68,308.06) and (261.09,304.06) .. (268.27,306.08) .. controls (275.44,308.11) and (279.67,315.39) .. (277.7,322.36) .. controls (275.74,329.32) and (268.33,333.32) .. (261.15,331.3) .. controls (252.49,328.86) and (248.16,327.63) .. (248.16,327.63) .. controls (248.16,327.63) and (249.35,323.43) .. (251.72,315.03) -- cycle ;
			\draw   (257.57,336.74) .. controls (257.57,336.74) and (257.57,336.74) .. (257.57,336.74) .. controls (262.77,341.78) and (262.79,350.2) .. (257.6,355.56) .. controls (252.42,360.91) and (244,361.18) .. (238.8,356.15) .. controls (233.6,351.12) and (233.59,342.69) .. (238.77,337.34) .. controls (245.04,330.87) and (248.17,327.64) .. (248.16,327.63) .. controls (248.17,327.64) and (251.3,330.67) .. (257.57,336.74) -- cycle ;
			\draw  [color={rgb, 255:red, 0; green, 0; blue, 0 }  ,draw opacity=1 ] (120.8,92.51) .. controls (120.8,85.33) and (126.62,79.51) .. (133.8,79.51) .. controls (140.98,79.51) and (146.8,85.33) .. (146.8,92.51) .. controls (146.8,99.69) and (140.98,105.51) .. (133.8,105.51) .. controls (126.62,105.51) and (120.8,99.69) .. (120.8,92.51) -- cycle ;
			\draw  [color={rgb, 255:red, 0; green, 0; blue, 0 }  ,draw opacity=1 ] (94.8,92.51) .. controls (94.8,85.33) and (100.62,79.51) .. (107.8,79.51) .. controls (114.98,79.51) and (120.8,85.33) .. (120.8,92.51) .. controls (120.8,99.69) and (114.98,105.51) .. (107.8,105.51) .. controls (100.62,105.51) and (94.8,99.69) .. (94.8,92.51) -- cycle ;
			\draw  [color={rgb, 255:red, 0; green, 0; blue, 0 }  ,draw opacity=1 ] (374.8,92.51) .. controls (374.8,85.33) and (380.62,79.51) .. (387.8,79.51) .. controls (394.98,79.51) and (400.8,85.33) .. (400.8,92.51) .. controls (400.8,99.69) and (394.98,105.51) .. (387.8,105.51) .. controls (380.62,105.51) and (374.8,99.69) .. (374.8,92.51) -- cycle ;
			\draw  [color={rgb, 255:red, 0; green, 0; blue, 0 }  ,draw opacity=1 ] (348.8,92.51) .. controls (348.8,85.33) and (354.62,79.51) .. (361.8,79.51) .. controls (368.98,79.51) and (374.8,85.33) .. (374.8,92.51) .. controls (374.8,99.69) and (368.98,105.51) .. (361.8,105.51) .. controls (354.62,105.51) and (348.8,99.69) .. (348.8,92.51) -- cycle ;
			\draw  [color={rgb, 255:red, 0; green, 0; blue, 0 }  ,draw opacity=1 ] (250.8,163.51) .. controls (250.8,156.33) and (256.62,150.51) .. (263.8,150.51) .. controls (270.98,150.51) and (276.8,156.33) .. (276.8,163.51) .. controls (276.8,170.69) and (270.98,176.51) .. (263.8,176.51) .. controls (256.62,176.51) and (250.8,170.69) .. (250.8,163.51) -- cycle ;
			\draw  [color={rgb, 255:red, 0; green, 0; blue, 0 }  ,draw opacity=1 ] (224.8,163.51) .. controls (224.8,156.33) and (230.62,150.51) .. (237.8,150.51) .. controls (244.98,150.51) and (250.8,156.33) .. (250.8,163.51) .. controls (250.8,170.69) and (244.98,176.51) .. (237.8,176.51) .. controls (230.62,176.51) and (224.8,170.69) .. (224.8,163.51) -- cycle ;
			\draw  [fill={rgb, 255:red, 0; green, 0; blue, 0 }  ,fill opacity=1 ] (156.9,92.8) .. controls (156.9,91.25) and (158.15,90) .. (159.7,90) .. controls (161.25,90) and (162.5,91.25) .. (162.5,92.8) .. controls (162.5,94.35) and (161.25,95.6) .. (159.7,95.6) .. controls (158.15,95.6) and (156.9,94.35) .. (156.9,92.8) -- cycle ;
			\draw  [fill={rgb, 255:red, 0; green, 0; blue, 0 }  ,fill opacity=1 ] (335.5,92.8) .. controls (335.5,91.25) and (336.75,90) .. (338.3,90) .. controls (339.85,90) and (341.1,91.25) .. (341.1,92.8) .. controls (341.1,94.35) and (339.85,95.6) .. (338.3,95.6) .. controls (336.75,95.6) and (335.5,94.35) .. (335.5,92.8) -- cycle ;
			\draw   (159.7,92.8) .. controls (159.7,64.3) and (199.68,41.2) .. (249,41.2) .. controls (298.32,41.2) and (338.3,64.3) .. (338.3,92.8) .. controls (338.3,121.3) and (298.32,144.4) .. (249,144.4) .. controls (199.68,144.4) and (159.7,121.3) .. (159.7,92.8) -- cycle ;
			\draw  [color={rgb, 255:red, 0; green, 0; blue, 0 }  ,draw opacity=1 ] (250.8,23.51) .. controls (250.8,16.33) and (256.62,10.51) .. (263.8,10.51) .. controls (270.98,10.51) and (276.8,16.33) .. (276.8,23.51) .. controls (276.8,30.69) and (270.98,36.51) .. (263.8,36.51) .. controls (256.62,36.51) and (250.8,30.69) .. (250.8,23.51) -- cycle ;
			\draw  [color={rgb, 255:red, 0; green, 0; blue, 0 }  ,draw opacity=1 ] (224.8,23.51) .. controls (224.8,16.33) and (230.62,10.51) .. (237.8,10.51) .. controls (244.98,10.51) and (250.8,16.33) .. (250.8,23.51) .. controls (250.8,30.69) and (244.98,36.51) .. (237.8,36.51) .. controls (230.62,36.51) and (224.8,30.69) .. (224.8,23.51) -- cycle ;
			\draw  [color={rgb, 255:red, 208; green, 2; blue, 27 }  ,draw opacity=1 ][fill={rgb, 255:red, 208; green, 2; blue, 27 }  ,fill opacity=1 ] (119.3,87.01) .. controls (119.3,86.18) and (119.97,85.51) .. (120.8,85.51) .. controls (121.63,85.51) and (122.3,86.18) .. (122.3,87.01) .. controls (122.3,87.84) and (121.63,88.51) .. (120.8,88.51) .. controls (119.97,88.51) and (119.3,87.84) .. (119.3,87.01) -- cycle ;
			\draw  [color={rgb, 255:red, 208; green, 2; blue, 27 }  ,draw opacity=1 ][fill={rgb, 255:red, 208; green, 2; blue, 27 }  ,fill opacity=1 ] (223.3,163.51) .. controls (223.3,162.68) and (223.97,162.01) .. (224.8,162.01) .. controls (225.63,162.01) and (226.3,162.68) .. (226.3,163.51) .. controls (226.3,164.34) and (225.63,165.01) .. (224.8,165.01) .. controls (223.97,165.01) and (223.3,164.34) .. (223.3,163.51) -- cycle ;
			\draw  [color={rgb, 255:red, 208; green, 2; blue, 27 }  ,draw opacity=1 ][fill={rgb, 255:red, 208; green, 2; blue, 27 }  ,fill opacity=1 ] (373.3,97.01) .. controls (373.3,96.18) and (373.97,95.51) .. (374.8,95.51) .. controls (375.63,95.51) and (376.3,96.18) .. (376.3,97.01) .. controls (376.3,97.84) and (375.63,98.51) .. (374.8,98.51) .. controls (373.97,98.51) and (373.3,97.84) .. (373.3,97.01) -- cycle ;
			\draw  [color={rgb, 255:red, 208; green, 2; blue, 27 }  ,draw opacity=1 ][fill={rgb, 255:red, 208; green, 2; blue, 27 }  ,fill opacity=1 ] (275.3,23.51) .. controls (275.3,22.68) and (275.97,22.01) .. (276.8,22.01) .. controls (277.63,22.01) and (278.3,22.68) .. (278.3,23.51) .. controls (278.3,24.34) and (277.63,25.01) .. (276.8,25.01) .. controls (275.97,25.01) and (275.3,24.34) .. (275.3,23.51) -- cycle ;
			\draw  [color={rgb, 255:red, 208; green, 2; blue, 27 }  ,draw opacity=1 ][fill={rgb, 255:red, 208; green, 2; blue, 27 }  ,fill opacity=1 ] (79.85,261.69) .. controls (79.85,260.87) and (80.52,260.19) .. (81.35,260.19) .. controls (82.17,260.19) and (82.85,260.87) .. (82.85,261.69) .. controls (82.85,262.52) and (82.17,263.19) .. (81.35,263.19) .. controls (80.52,263.19) and (79.85,262.52) .. (79.85,261.69) -- cycle ;
			\draw   (237.76,224.28) .. controls (237.76,215.12) and (245.19,207.69) .. (254.35,207.69) .. controls (263.51,207.69) and (270.93,215.12) .. (270.93,224.28) .. controls (270.93,233.44) and (263.51,240.86) .. (254.35,240.86) .. controls (245.19,240.86) and (237.76,233.44) .. (237.76,224.28) -- cycle ;
			\draw   (270.93,224.28) .. controls (270.93,215.12) and (278.35,207.69) .. (287.51,207.69) .. controls (296.67,207.69) and (304.1,215.12) .. (304.1,224.28) .. controls (304.1,233.44) and (296.67,240.86) .. (287.51,240.86) .. controls (278.35,240.86) and (270.93,233.44) .. (270.93,224.28) -- cycle ;
			\draw   (204.6,224.28) .. controls (204.6,215.12) and (212.02,207.69) .. (221.18,207.69) .. controls (230.34,207.69) and (237.76,215.12) .. (237.76,224.28) .. controls (237.76,233.44) and (230.34,240.86) .. (221.18,240.86) .. controls (212.02,240.86) and (204.6,233.44) .. (204.6,224.28) -- cycle ;
			\draw  [color={rgb, 255:red, 208; green, 2; blue, 27 }  ,draw opacity=1 ][fill={rgb, 255:red, 208; green, 2; blue, 27 }  ,fill opacity=1 ] (236.26,218.78) .. controls (236.26,217.95) and (236.93,217.28) .. (237.76,217.28) .. controls (238.59,217.28) and (239.26,217.95) .. (239.26,218.78) .. controls (239.26,219.61) and (238.59,220.28) .. (237.76,220.28) .. controls (236.93,220.28) and (236.26,219.61) .. (236.26,218.78) -- cycle ;
			\draw   (237.76,274.28) .. controls (237.76,265.12) and (245.19,257.69) .. (254.35,257.69) .. controls (263.51,257.69) and (270.93,265.12) .. (270.93,274.28) .. controls (270.93,283.44) and (263.51,290.86) .. (254.35,290.86) .. controls (245.19,290.86) and (237.76,283.44) .. (237.76,274.28) -- cycle ;
			\draw   (270.93,274.28) .. controls (270.93,265.12) and (278.35,257.69) .. (287.51,257.69) .. controls (296.67,257.69) and (304.1,265.12) .. (304.1,274.28) .. controls (304.1,283.44) and (296.67,290.86) .. (287.51,290.86) .. controls (278.35,290.86) and (270.93,283.44) .. (270.93,274.28) -- cycle ;
			\draw   (204.6,274.28) .. controls (204.6,265.12) and (212.02,257.69) .. (221.18,257.69) .. controls (230.34,257.69) and (237.76,265.12) .. (237.76,274.28) .. controls (237.76,283.44) and (230.34,290.86) .. (221.18,290.86) .. controls (212.02,290.86) and (204.6,283.44) .. (204.6,274.28) -- cycle ;
			\draw  [color={rgb, 255:red, 208; green, 2; blue, 27 }  ,draw opacity=1 ][fill={rgb, 255:red, 208; green, 2; blue, 27 }  ,fill opacity=1 ] (269.43,269.78) .. controls (269.43,268.95) and (270.1,268.28) .. (270.93,268.28) .. controls (271.76,268.28) and (272.43,268.95) .. (272.43,269.78) .. controls (272.43,270.61) and (271.76,271.28) .. (270.93,271.28) .. controls (270.1,271.28) and (269.43,270.61) .. (269.43,269.78) -- cycle ;
			\draw  [color={rgb, 255:red, 208; green, 2; blue, 27 }  ,draw opacity=1 ][fill={rgb, 255:red, 208; green, 2; blue, 27 }  ,fill opacity=1 ] (275.43,312.78) .. controls (275.43,311.95) and (276.1,311.28) .. (276.93,311.28) .. controls (277.76,311.28) and (278.43,311.95) .. (278.43,312.78) .. controls (278.43,313.61) and (277.76,314.28) .. (276.93,314.28) .. controls (276.1,314.28) and (275.43,313.61) .. (275.43,312.78) -- cycle ;
			\draw    (139.1,278.28) -- (186.17,278.28) ;
			\draw [shift={(188.17,278.28)}, rotate = 180] [color={rgb, 255:red, 0; green, 0; blue, 0 }  ][line width=0.75]    (10.93,-3.29) .. controls (6.95,-1.4) and (3.31,-0.3) .. (0,0) .. controls (3.31,0.3) and (6.95,1.4) .. (10.93,3.29)   ;
			
			\draw (44,269) node [anchor=north west][inner sep=0.75pt]   [align=left] {$\displaystyle 1$};
			\draw (76,269) node [anchor=north west][inner sep=0.75pt]   [align=left] {$\displaystyle 2$};
			\draw (109,269) node [anchor=north west][inner sep=0.75pt]   [align=left] {$\displaystyle 3$};
			\draw (243,335.46) node [anchor=north west][inner sep=0.75pt]   [align=left] {$\displaystyle 3$};
			\draw (227,310.46) node [anchor=north west][inner sep=0.75pt]   [align=left] {$\displaystyle 1$};
			\draw (259,310.46) node [anchor=north west][inner sep=0.75pt]   [align=left] {$\displaystyle 2$};
			\draw (103,83) node [anchor=north west][inner sep=0.75pt]   [align=left] {$\displaystyle 1$};
			\draw (129,84) node [anchor=north west][inner sep=0.75pt]   [align=left] {$\displaystyle 2$};
			\draw (356,84) node [anchor=north west][inner sep=0.75pt]   [align=left] {$\displaystyle 1$};
			\draw (383,84) node [anchor=north west][inner sep=0.75pt]   [align=left] {$\displaystyle 2$};
			\draw (233,154) node [anchor=north west][inner sep=0.75pt]   [align=left] {$\displaystyle 1$};
			\draw (259,155) node [anchor=north west][inner sep=0.75pt]   [align=left] {$\displaystyle 2$};
			\draw (233,14) node [anchor=north west][inner sep=0.75pt]   [align=left] {$\displaystyle 1$};
			\draw (259,15) node [anchor=north west][inner sep=0.75pt]   [align=left] {$\displaystyle 2$};
			\draw (30,86.8) node [anchor=north west][inner sep=0.75pt]   [align=left] {$\displaystyle \mathcal{C}_{2} =$\\};
			\draw (217,215) node [anchor=north west][inner sep=0.75pt]   [align=left] {$\displaystyle 1$};
			\draw (249,215) node [anchor=north west][inner sep=0.75pt]   [align=left] {$\displaystyle 2$};
			\draw (282,215) node [anchor=north west][inner sep=0.75pt]   [align=left] {$\displaystyle 3$};
			\draw (217,265) node [anchor=north west][inner sep=0.75pt]   [align=left] {$\displaystyle 1$};
			\draw (249,265) node [anchor=north west][inner sep=0.75pt]   [align=left] {$\displaystyle 2$};
			\draw (282,265) node [anchor=north west][inner sep=0.75pt]   [align=left] {$\displaystyle 3$};
			\draw (156,258) node [anchor=north west][inner sep=0.75pt]   [align=left] {$\displaystyle \partial $};

		\end{tikzpicture}
		
		\caption{On top where is a full description of $\mathcal{C}_2\cong S^1$: there are two zero cells $(2,1)$ (on the left) and $(1,2)$ (on the right). The $1$-cells are $(2,1,2)$ (on the top) and $(1,2,1)$ (on the bottom). Below we see the cell $(2,3,2,1,2)\cong \Delta^0\times\Delta^2\times\Delta^0$ of $\mathcal{C}_3$ and the codimension one cells in its boundary.}
		\label{fig:esempi di cactus e loro bordi}
	\end{figure}
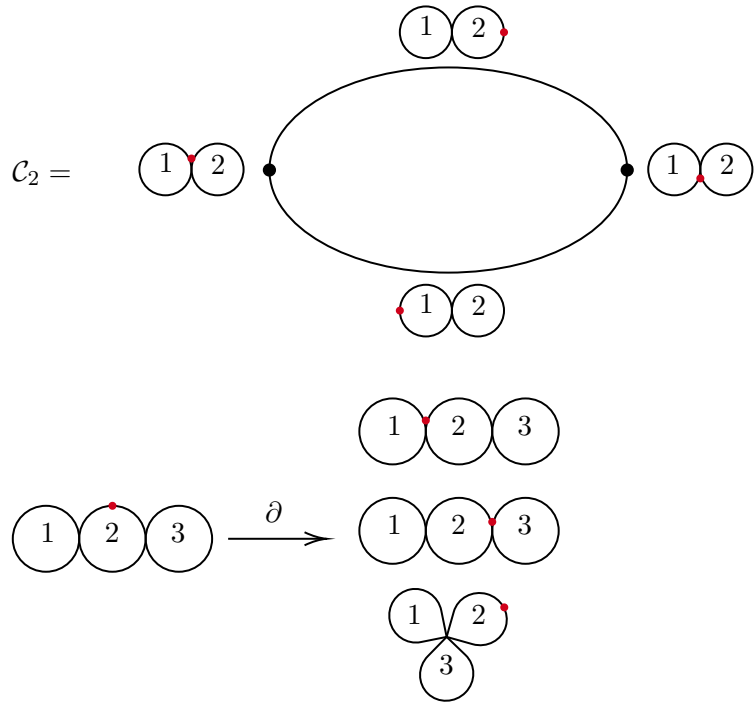
	\begin{oss}
		There are two relevant groups acting on $\mathcal{C}_n$: $S^1$ acts by rotating the basepoint of a cactus, $\Sigma_n$ acts by relabelling the lobes.
	\end{oss}
	The important thing about cacti is that they are a very small cellular model for the configuration space $F_n(\C)$:
	
	\begin{thm}[ \cite{Salvatore}]\label{thm:cactus sono deformation retract dello spazio di configurazioni}
		The space of cacti $\mathcal{C}_n$ is $(S^1\times\Sigma_n)$-equivariantly homotopy equivalent to $F_n(\C)$.
	\end{thm}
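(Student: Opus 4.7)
The plan is to construct mutually inverse $(S^1 \times \Sigma_n)$-equivariant homotopy equivalences between $\mathcal{C}_n$ and $F_n(\C)$. The key geometric intuition is that each cactus $x \in \mathcal{C}_n$ admits a canonical planar embedding of its underlying tree of circles: one places the root lobe at the origin, orients the outgoing edges according to the cyclic order dictated by $x$, and uses the equal-measure condition to fix the lobe radii. This defines a natural map $\Psi \colon \mathcal{C}_n \to F_n(\C)$ sending $x$ to the $n$-tuple of lobe centers, indexed by the labeling $\{1,\dots,n\}$ already carried by $x$.

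Equivariance under $\Sigma_n$ is automatic, since permuting the labels of the lobes permutes the output tuple. Equivariance under $S^1$ is arranged by decreeing that the root lobe's outgoing direction rotates with the basepoint: rotating the basepoint of $x$ by $e^{i\theta}$ along $S^1$ rotates the canonical embedding by the same angle, so that the image configuration rotates by $e^{i\theta}$, matching the rotational $S^1$-action on $F_n(\C)$ by scalar multiplication of unit complex numbers.

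To show $\Psi$ is a homotopy equivalence, I would construct an $(S^1\times\Sigma_n)$-equivariant homotopy inverse $\Phi \colon F_n(\C) \to \mathcal{C}_n$ via the classical inflating-disks procedure: given $(z_1,\dots,z_n) \in F_n(\C)$, grow pairwise disjoint closed disks of equal radius centered at the $z_i$ until their union becomes a connected region whose nerve is a tree with $n$ vertices. The boundary of this region, arc-length parametrized and rescaled to a fixed total length, defines a partition of $S^1$ satisfying the cactus axioms, and choosing a basepoint on this boundary that transforms equivariantly under $\C^*$-scaling of the configuration gives $\Phi$. The composition $\Psi \circ \Phi$ is homotopic to the identity via a straight-line homotopy between $z_i$ and the recentered lobe centers, and a cell-by-cell equivariant deformation on the product-of-simplices CW-structure of $\mathcal{C}_n$ described in the excerpt analogously yields $\Phi \circ \Psi \simeq \mathrm{id}$.

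The main obstacle is verifying continuity and equivariance of $\Phi$ across boundary strata of $\mathcal{C}_n$ where the dual tree type degenerates — for instance when three growing disks become tangent simultaneously, a configuration corresponding to a codimension-one face of a cactus cell. This requires a careful stratified analysis of the inflating-disks construction, and is the real heart of the McClure-Smith argument. An alternative route is to recognize both $\mathcal{C}_n$ and $F_n(\C)$ as $(S^1 \times \Sigma_n)$-equivariant models for the same operad up to weak equivalence and then invoke the equivariant Whitehead theorem for $G$-CW complexes; this bypasses the stratified analysis but demands operadic machinery orthogonal to the elementary cellular viewpoint of cacti used elsewhere in the paper.
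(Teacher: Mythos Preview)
The paper does not prove this theorem at all: it is stated as a result of McClure--Smith and cited to \cite{McClure-Smith1}, so there is no ``paper's own proof'' to compare against. Your task was therefore to supply an argument the paper omits, and your sketch does not succeed.

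There are genuine gaps in both directions of your proposed equivalence. For $\Psi$: a point of $\mathcal{C}_n$ has a basepoint on $S^1$, not a root lobe, and as the basepoint rotates it passes from one arc $I_j(x)$ to another; your ``canonical planar embedding'' would then jump discontinuously, so $\Psi$ is not even well-defined as a continuous map, let alone $S^1$-equivariant in the way you claim. For $\Phi$: the inflating-disks procedure does not in general produce a tree nerve. For three points at the vertices of an equilateral triangle, all three disks become tangent simultaneously and the nerve is a $3$-cycle; more generally, the set of radii at which the nerve is a tree with $n$ vertices can be empty. Even when a tree does appear, the boundary arcs of the tangent disks have no reason to have equal length, so you do not land in $\mathcal{C}_n$ as defined (equal-measure $1$-manifolds). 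These are not the ``careful stratified analysis'' issues you flag at the end; they are structural failures of the maps themselves.

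Your alternative route---recognizing both spaces as arity-$n$ pieces of weakly equivalent $E_2$-operads and invoking an equivariant Whitehead argument---is in fact much closer to how McClure--Smith actually proceed, and is not ``orthogonal machinery'' but the substance of their result. If you want a direct geometric argument, the comparison in Salvatore \cite{Salvatore2} (also cited in the paper) is the place to look; it goes through Voronov's and Kaufmann's cacti rather than a naive disk-growing map.
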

 \begin{corollario}\label{cor: spazio dei moduli omotopo a cactus senza pt base}
     The moduli space  $\M_{0,n+1}$ is homotopy equivalent to $\mathcal{C}_n/S^1$. 
 \end{corollario}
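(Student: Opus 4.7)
The plan is to combine two $S^1$-equivariant homotopy equivalences: an equivariant slice argument showing $F_n(\C)/S^1\simeq\M_{0,n+1}$, and the McClure--Smith equivalence $\mathcal{C}_n\simeq F_n(\C)$ recalled in Theorem \ref{thm:cactus sono deformation retract dello spazio di configurazioni}. The starting point is the identification $\M_{0,n+1}\cong F_n(\C)/G$ with $G\coloneqq\C\rtimes\C^*$; the subgroup $H\coloneqq\C\rtimes\R^+$ of translations and positive real dilations is normal in $G$, with quotient $G/H\cong S^1$, so $\M_{0,n+1}\cong (F_n(\C)/H)/S^1$.

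For the first equivalence I would introduce the $S^1$-invariant slice
\[
S\coloneqq\Big\{(z_1,\dots,z_n)\in F_n(\C)\;\Big|\;\textstyle\sum_i z_i=0,\ \sum_i|z_i|^2=1\Big\},
\]
which meets each $H$-orbit in exactly one point (first translate to put the barycenter at $0$, then scale uniquely so that the sum of squared moduli equals $1$) and hence maps $S^1$-equivariantly homeomorphically onto $F_n(\C)/H$. The retraction $r:F_n(\C)\to S$ that subtracts the barycenter and then rescales to norm one is manifestly $S^1$-equivariant, because rotation commutes with both operations; the straight-line homotopy from $\mathrm{id}$ to $r$ has the form $z_i\mapsto az_i+b$ with $a>0$ real at every time, so it stays inside $F_n(\C)$ and remains $S^1$-equivariant. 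Passing to $S^1$-orbits yields $F_n(\C)/S^1\simeq S/S^1\cong\M_{0,n+1}$.

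Finally, Theorem \ref{thm:cactus sono deformation retract dello spazio di configurazioni} provides an $S^1$-equivariant homotopy equivalence $\mathcal{C}_n\simeq F_n(\C)$, and chaining with the first step gives $\mathcal{C}_n/S^1\simeq\M_{0,n+1}$. The only formal point to check is that an $S^1$-equivariant homotopy equivalence descends to a homotopy equivalence on orbit spaces; this follows at once because an $S^1$-equivariant inverse and the two $S^1$-equivariant compositional homotopies descend to inverses and homotopies on the quotients. I do not expect any serious obstacle beyond keeping track of these descents.
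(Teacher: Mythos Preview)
Your proof is correct and follows essentially the same route as the paper: identify $\M_{0,n+1}\cong F_n(\C)/(\C\rtimes\C^*)$, collapse the contractible normal subgroup $H=\C\rtimes\R^+$ to pass to $F_n(\C)/S^1$, and then invoke the McClure--Smith equivalence to reach $\mathcal{C}_n/S^1$. The paper's proof is terser---it simply asserts $F_n(\C)/(\C\rtimes\C^*)\simeq F_n(\C)/S^1$ ``because the group $\C\rtimes\C^*$ is homotopy equivalent to $S^1$''---whereas your explicit $S^1$-invariant slice $S$ and the affine straight-line homotopy make that step rigorous; this is a welcome clarification rather than a different idea.
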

 \begin{proof}
      Recall that $\M_{0,n+1}$ is homeomorphic to $F_n(\C)/\C\rtimes \C^*$. By Theorem \ref{thm:cactus sono deformation retract dello spazio di configurazioni} $\mathcal{C}_n/S^1$ is homotopy equivalent to $F_n(\C)/S^1$. Therefore we get that 
     \[
     \M_{0,n+1}\cong \frac{F_n(\C)}{\C\rtimes \C^*}\simeq \frac{F_n(\C)}{S^1}\simeq \frac{\mathcal{C}_n}{S^1} 
     \]
     where the first homotopy equivalence holds because the group $\C\rtimes \C^*$ is homotopy equivalent to $S^1$.
 \end{proof}
\begin{oss}
    The quotient $\mathcal{C}_n/S^1$ is still a regular CW-complex, and its cells are described by the same combinatorics of $\mathcal{C}_n$ except that we do not have a basepoint on our cacti (see Figure \ref{fig:esempio di cactus senza punto base} for an example). We call $\mathcal{C}_n/S^1$ the \textbf{space of unbased cacti}. 
\end{oss}	
		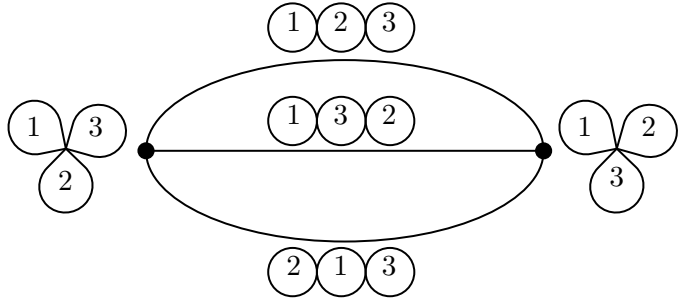
\begin{figure}
			\centering

			\tikzset{every picture/.style={line width=0.75pt}} 
			
			\begin{tikzpicture}[x=0.75pt,y=0.75pt,yscale=-1,xscale=1]
				
				\draw  [fill={rgb, 255:red, 0; green, 0; blue, 0 }  ,fill opacity=1 ] (295.87,96.73) .. controls (295.87,94.65) and (297.56,92.96) .. (299.64,92.96) .. controls (301.73,92.96) and (303.41,94.65) .. (303.41,96.73) .. controls (303.41,98.82) and (301.73,100.51) .. (299.64,100.51) .. controls (297.56,100.51) and (295.87,98.82) .. (295.87,96.73) -- cycle ;
				\draw   (48.32,98.23) .. controls (48.32,98.23) and (48.32,98.23) .. (48.32,98.23) .. controls (48.32,98.23) and (48.32,98.23) .. (48.32,98.23) .. controls (41.23,99.66) and (34.28,94.89) .. (32.81,87.58) .. controls (31.33,80.28) and (35.89,73.19) .. (42.98,71.76) .. controls (50.07,70.33) and (57.02,75.09) .. (58.49,82.4) .. controls (60.27,91.22) and (61.16,95.63) .. (61.16,95.63) .. controls (61.16,95.63) and (56.88,96.5) .. (48.32,98.23) -- cycle ;
				\draw   (64.72,83.03) .. controls (64.72,83.03) and (64.72,83.03) .. (64.72,83.03) .. controls (66.68,76.06) and (74.09,72.06) .. (81.27,74.08) .. controls (88.44,76.11) and (92.67,83.39) .. (90.7,90.36) .. controls (88.74,97.32) and (81.33,101.32) .. (74.15,99.3) .. controls (65.49,96.86) and (61.16,95.63) .. (61.16,95.63) .. controls (61.16,95.63) and (62.35,91.43) .. (64.72,83.03) -- cycle ;
				\draw   (70.57,104.74) .. controls (70.57,104.74) and (70.57,104.74) .. (70.57,104.74) .. controls (75.77,109.78) and (75.79,118.2) .. (70.6,123.56) .. controls (65.42,128.91) and (57,129.18) .. (51.8,124.15) .. controls (46.6,119.12) and (46.59,110.69) .. (51.77,105.34) .. controls (51.77,105.34) and (51.77,105.34) .. (51.77,105.34) .. controls (58.04,98.87) and (61.17,95.64) .. (61.16,95.63) .. controls (61.17,95.64) and (64.3,98.67) .. (70.57,104.74) -- cycle ;
				\draw  [fill={rgb, 255:red, 0; green, 0; blue, 0 }  ,fill opacity=1 ] (97.39,96.7) .. controls (97.39,94.62) and (99.08,92.93) .. (101.17,92.93) .. controls (103.25,92.93) and (104.94,94.62) .. (104.94,96.7) .. controls (104.94,98.78) and (103.25,100.47) .. (101.17,100.47) .. controls (99.08,100.47) and (97.39,98.78) .. (97.39,96.7) -- cycle ;
				\draw   (323.32,98.23) .. controls (323.32,98.23) and (323.32,98.23) .. (323.32,98.23) .. controls (323.32,98.23) and (323.32,98.23) .. (323.32,98.23) .. controls (316.23,99.66) and (309.28,94.89) .. (307.81,87.58) .. controls (306.33,80.28) and (310.89,73.19) .. (317.98,71.76) .. controls (325.07,70.33) and (332.02,75.09) .. (333.49,82.4) .. controls (335.27,91.22) and (336.16,95.63) .. (336.16,95.63) .. controls (336.16,95.63) and (331.88,96.5) .. (323.32,98.23) -- cycle ;
				\draw   (339.72,83.03) .. controls (339.72,83.03) and (339.72,83.03) .. (339.72,83.03) .. controls (341.68,76.06) and (349.09,72.06) .. (356.27,74.08) .. controls (363.44,76.11) and (367.67,83.39) .. (365.7,90.36) .. controls (363.74,97.32) and (356.33,101.32) .. (349.15,99.3) .. controls (340.49,96.86) and (336.16,95.63) .. (336.16,95.63) .. controls (336.16,95.63) and (337.35,91.43) .. (339.72,83.03) -- cycle ;
				\draw   (345.57,104.74) .. controls (345.57,104.74) and (345.57,104.74) .. (345.57,104.74) .. controls (350.77,109.78) and (350.79,118.2) .. (345.6,123.56) .. controls (340.42,128.91) and (332,129.18) .. (326.8,124.15) .. controls (321.6,119.12) and (321.59,110.69) .. (326.77,105.34) .. controls (333.04,98.87) and (336.17,95.64) .. (336.16,95.63) .. controls (336.17,95.64) and (339.3,98.67) .. (345.57,104.74) -- cycle ;
				\draw   (101.17,96.7) .. controls (101.17,71.5) and (145.6,51.07) .. (200.4,51.07) .. controls (255.21,51.07) and (299.64,71.5) .. (299.64,96.7) .. controls (299.64,121.9) and (255.21,142.33) .. (200.4,142.33) .. controls (145.6,142.33) and (101.17,121.9) .. (101.17,96.7) -- cycle ;
				\draw   (186.48,34.71) .. controls (186.48,27.99) and (191.92,22.55) .. (198.63,22.55) .. controls (205.34,22.55) and (210.79,27.99) .. (210.79,34.71) .. controls (210.79,41.42) and (205.34,46.86) .. (198.63,46.86) .. controls (191.92,46.86) and (186.48,41.42) .. (186.48,34.71) -- cycle ;
				\draw   (210.79,34.71) .. controls (210.79,27.99) and (216.23,22.55) .. (222.94,22.55) .. controls (229.65,22.55) and (235.1,27.99) .. (235.1,34.71) .. controls (235.1,41.42) and (229.65,46.86) .. (222.94,46.86) .. controls (216.23,46.86) and (210.79,41.42) .. (210.79,34.71) -- cycle ;
				\draw   (162.17,34.71) .. controls (162.17,27.99) and (167.61,22.55) .. (174.32,22.55) .. controls (181.03,22.55) and (186.48,27.99) .. (186.48,34.71) .. controls (186.48,41.42) and (181.03,46.86) .. (174.32,46.86) .. controls (167.61,46.86) and (162.17,41.42) .. (162.17,34.71) -- cycle ;
				\draw    (101.17,96.7) -- (299.64,96.7) ;
				\draw   (186.48,81.71) .. controls (186.48,74.99) and (191.92,69.55) .. (198.63,69.55) .. controls (205.34,69.55) and (210.79,74.99) .. (210.79,81.71) .. controls (210.79,88.42) and (205.34,93.86) .. (198.63,93.86) .. controls (191.92,93.86) and (186.48,88.42) .. (186.48,81.71) -- cycle ;
				\draw   (210.79,81.71) .. controls (210.79,74.99) and (216.23,69.55) .. (222.94,69.55) .. controls (229.65,69.55) and (235.1,74.99) .. (235.1,81.71) .. controls (235.1,88.42) and (229.65,93.86) .. (222.94,93.86) .. controls (216.23,93.86) and (210.79,88.42) .. (210.79,81.71) -- cycle ;
				\draw   (162.17,81.71) .. controls (162.17,74.99) and (167.61,69.55) .. (174.32,69.55) .. controls (181.03,69.55) and (186.48,74.99) .. (186.48,81.71) .. controls (186.48,88.42) and (181.03,93.86) .. (174.32,93.86) .. controls (167.61,93.86) and (162.17,88.42) .. (162.17,81.71) -- cycle ;
				\draw   (186.48,157.71) .. controls (186.48,150.99) and (191.92,145.55) .. (198.63,145.55) .. controls (205.34,145.55) and (210.79,150.99) .. (210.79,157.71) .. controls (210.79,164.42) and (205.34,169.86) .. (198.63,169.86) .. controls (191.92,169.86) and (186.48,164.42) .. (186.48,157.71) -- cycle ;
				\draw   (210.79,157.71) .. controls (210.79,150.99) and (216.23,145.55) .. (222.94,145.55) .. controls (229.65,145.55) and (235.1,150.99) .. (235.1,157.71) .. controls (235.1,164.42) and (229.65,169.86) .. (222.94,169.86) .. controls (216.23,169.86) and (210.79,164.42) .. (210.79,157.71) -- cycle ;
				\draw   (162.17,157.71) .. controls (162.17,150.99) and (167.61,145.55) .. (174.32,145.55) .. controls (181.03,145.55) and (186.48,150.99) .. (186.48,157.71) .. controls (186.48,164.42) and (181.03,169.86) .. (174.32,169.86) .. controls (167.61,169.86) and (162.17,164.42) .. (162.17,157.71) -- cycle ;
				
				\draw (71,78.46) node [anchor=north west][inner sep=0.75pt]   [align=left] {$\displaystyle 3$};
				\draw (40,78.46) node [anchor=north west][inner sep=0.75pt]   [align=left] {$\displaystyle 1$};
				\draw (55.77,105.34) node [anchor=north west][inner sep=0.75pt]   [align=left] {$\displaystyle 2$};
				\draw (331,103.46) node [anchor=north west][inner sep=0.75pt]   [align=left] {$\displaystyle 3$};
				\draw (315,78.46) node [anchor=north west][inner sep=0.75pt]   [align=left] {$\displaystyle 1$};
				\draw (347,78.46) node [anchor=north west][inner sep=0.75pt]   [align=left] {$\displaystyle 2$};
				\draw (169.79,25.5) node [anchor=north west][inner sep=0.75pt]   [align=left] {$\displaystyle 1$};
				\draw (193.24,25.5) node [anchor=north west][inner sep=0.75pt]   [align=left] {$\displaystyle 2$};
				\draw (217.43,25.5) node [anchor=north west][inner sep=0.75pt]   [align=left] {$\displaystyle 3$};
				\draw (169.79,72.5) node [anchor=north west][inner sep=0.75pt]   [align=left] {$\displaystyle 1$};
				\draw (193.24,72.5) node [anchor=north west][inner sep=0.75pt]   [align=left] {$\displaystyle 3$};
				\draw (217.43,72.5) node [anchor=north west][inner sep=0.75pt]   [align=left] {$\displaystyle 2$};
				\draw (169.79,148.5) node [anchor=north west][inner sep=0.75pt]   [align=left] {$\displaystyle 2$};
				\draw (193.24,148.5) node [anchor=north west][inner sep=0.75pt]   [align=left] {$\displaystyle 1$};
				\draw (217.43,148.5) node [anchor=north west][inner sep=0.75pt]   [align=left] {$\displaystyle 3$};

			\end{tikzpicture}

			\caption{This picture shows the CW-complex $\mathcal{C}_3/S^1\simeq \M_{0,4}$. There are two zero cells and three edges. }
			\label{fig:esempio di cactus senza punto base}
		\end{figure}

 \subsection{A combinatorial model for $\mathcal{M}_{0,n+1}/\Sigma_n$}
 In this paragraph  we describe a small combinatorial model for $\mathcal{M}_{0,n+1}/\Sigma_n$. We will use this model in Paragraph \ref{sec:examples} to compute $H_*(\M_{0,n+1}/\Sigma_n;\Z)$ for small values of $n$.
	\begin{prop}\label{prop: equivalenza omotopica tra spazio dei moduli e quoziente dai cactus}
		$\M_{0,n+1}/\Sigma_n$ is homotopy equivalent to $\mathcal{C}_n/(S^1\times \Sigma_n)$. 
	\end{prop}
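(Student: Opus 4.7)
My plan is to assemble the homotopy equivalence as a chain
\[
\M_{0,n+1}/\Sigma_n \;\cong\; C_n(\C)/(\C\rtimes\C^*) \;\simeq\; C_n(\C)/S^1 \;\simeq\; \mathcal{C}_n/(S^1 \times \Sigma_n),
\]
in which the first identification is Proposition \ref{prop:quotient as unordered configuration} and the two homotopy equivalences each require a short separate argument.

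For the middle equivalence I would exhibit an explicit $S^1$-equivariant strong deformation retraction of $C_n(\C)$ onto the subspace $Y$ of \emph{normalized} configurations, namely those with $B(\mathbf{z}) = 0$ and $\sum_i |z_i|^2 = 1$. The retraction is done in two stages: first translate the barycenter to the origin, then rescale to unit size. Each stage commutes with rotations, so the whole deformation is $S^1$-equivariant. For $n \geq 2$ the normal subgroup $\C\rtimes\R_{>0} \subset \C\rtimes\C^*$ acts freely on $C_n(\C)$ (uniqueness of recentering and rescaling), and each $(\C\rtimes\C^*)$-orbit meets $Y$ in a single $S^1$-orbit, so there is a canonical homeomorphism $Y/S^1 \cong C_n(\C)/(\C\rtimes\C^*)$. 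Passing to $S^1$-quotients in the deformation retract then yields $C_n(\C)/S^1 \simeq C_n(\C)/(\C\rtimes\C^*)$.

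The rightmost equivalence is obtained by applying Theorem \ref{thm:cactus sono deformation retract dello spazio di configurazioni}: McClure--Smith provide a genuine $(S^1 \times \Sigma_n)$-equivariant homotopy equivalence $\mathcal{C}_n \simeq F_n(\C)$ between $(S^1 \times \Sigma_n)$-CW complexes, and therefore it descends to a homotopy equivalence on the quotients by the full group. The right-hand side is $F_n(\C)/(S^1 \times \Sigma_n) = C_n(\C)/S^1$ because the $\Sigma_n$- and $S^1$-actions commute. The most delicate point in the whole argument is ensuring that this last descent is legitimate: one needs to know the equivariance in its strong form, i.e.~that there is an \emph{equivariant} homotopy inverse, not merely a non-equivariant one -- otherwise passing to quotients can fail to preserve homotopy type, as simple examples with free versus trivial actions show. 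Fortunately McClure--Smith's construction is strong enough for this, so the chain of equivalences goes through cleanly.
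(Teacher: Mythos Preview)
Your proof is correct and follows essentially the same route as the paper's: use the McClure--Smith $(S^1\times\Sigma_n)$-equivariant equivalence $\mathcal{C}_n\simeq F_n(\C)$ to pass to quotients, and then identify $F_n(\C)/(S^1\times\Sigma_n)=C_n(\C)/S^1$ with $\M_{0,n+1}/\Sigma_n$. You are in fact more explicit than the paper on two points it treats tersely: you spell out the deformation retraction giving $C_n(\C)/(\C\rtimes\C^*)\simeq C_n(\C)/S^1$ (the paper just says the group $\C\rtimes\C^*$ is homotopy equivalent to $S^1$), and you flag the need for a genuine equivariant homotopy inverse before descending to quotients.
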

	\begin{proof}
		By Proposition \ref{thm:cactus sono deformation retract dello spazio di configurazioni} $F_n(\C)$ is $(S^1\times \Sigma_n)$-equivariantly homotopy equivalent to $\mathcal{C}_n$, so we have a homotopy equivalence between the quotients
		\[
		F_n(\C)/(S^1\times \Sigma_n)\simeq \mathcal{C}_n/(S^1\times \Sigma_n)
		\]
		The left hand space is homotopy equivalent to $\M_{0,n+1}/\Sigma_n$, and this proves the statement.
	\end{proof}
 \begin{oss}\label{oss: quoziente stretto è omotopo al quoziente delle configurazioni non ordinate per il cerchio}
     As we know from Theorem \ref{thm:cactus sono deformation retract dello spazio di configurazioni} the space of cacti $\mathcal{C}_n$ is $(S^1\times \Sigma_n)$-equivariantly homotopy equivalent to the ordered configuration space $F_n(\C)$. Therefore we have a homotopy equivalence between $\mathcal{C}_n/(S^1\times \Sigma_n)$  and $C_n(\C)/S^1$. Combining this with Proposition \ref{prop: equivalenza omotopica tra spazio dei moduli e quoziente dai cactus} we conclude that $\M_{0,n+1}/\Sigma_n$ is homotopy equivalent to $C_n(\C)/S^1$. We will freely switch between $\M_{0,n+1}/\Sigma_n$ and  $C_n(\C)/S^1$ depending on the situation.
 \end{oss}
	Now let us describe in details the space $\mathcal{C}_n/(S^1\times \Sigma_n)$: recall that the space of cacti $\mathcal{C}_n$ is a regular CW-complex whose cells are described by cacti (with numbered lobes and a basepoint). The boundary of a cell is described by collapsing arcs. The $(S^1\times \Sigma_n)$-action is encoded as follows:
	\begin{itemize}
		\item $S^1$ acts by rotating the basepoint. 
		\item  $\Sigma_n$ acts by relabelling the lobes.
	\end{itemize}
	To better understand the quotient $\mathcal{C}_n/(S^1\times \Sigma_n)$ it is useful to first consider the space of unbased cactus $\mathcal{C}_n/S^1\simeq \M_{0,n+1}$ and then quotient by the $\Sigma_n$-action: $\mathcal{C}_n/S^1$ is a CW-complex where a cell is given by an unbased cactus, and the boundary of each cell is obtained by collapsing arcs as in $\mathcal{C}_k$. Now we quotient by the $\Sigma_n$-action: this corresponds to relabelling the lobes. This time we have some cells with non-trivial stabilizer and we need to take this into account (some examples of these cells are depicted in Figure \ref{fig:alcune celle e i rispettivi stabilizzatori}). 
	\begin{figure}
		\centering

		\tikzset{every picture/.style={line width=0.75pt}} 
		
		\begin{tikzpicture}[x=0.75pt,y=0.75pt,yscale=-1,xscale=1]
			
			\draw   (100.16,73.86) .. controls (100.16,73.86) and (100.16,73.86) .. (100.16,73.86) .. controls (100.16,73.86) and (100.16,73.86) .. (100.16,73.86) .. controls (107.4,73.8) and (113.31,79.8) .. (113.37,87.25) .. controls (113.43,94.71) and (107.62,100.8) .. (100.38,100.86) .. controls (93.15,100.92) and (87.23,94.93) .. (87.17,87.47) .. controls (87.1,78.47) and (86.53,73.43) .. (85.45,72.33) .. controls (86.53,73.43) and (91.43,73.94) .. (100.16,73.86) -- cycle ;
			\draw   (70.75,70.62) .. controls (70.75,70.62) and (70.75,70.62) .. (70.75,70.62) .. controls (70.75,70.62) and (70.75,70.62) .. (70.75,70.62) .. controls (63.52,70.59) and (57.68,64.52) .. (57.71,57.06) .. controls (57.74,49.61) and (63.63,43.59) .. (70.86,43.62) .. controls (78.1,43.65) and (83.94,49.72) .. (83.91,57.17) .. controls (83.87,66.18) and (84.39,71.23) .. (85.45,72.33) .. controls (84.39,71.23) and (79.49,70.66) .. (70.75,70.62) -- cycle ;
			\draw   (87.18,57.64) .. controls (87.18,57.64) and (87.18,57.64) .. (87.18,57.64) .. controls (87.18,57.64) and (87.18,57.64) .. (87.18,57.64) .. controls (87.21,50.4) and (93.29,44.57) .. (100.74,44.61) .. controls (108.2,44.65) and (114.21,50.54) .. (114.17,57.78) .. controls (114.14,65.01) and (108.06,70.85) .. (100.61,70.81) .. controls (91.61,70.76) and (86.55,71.27) .. (85.45,72.33) .. controls (86.55,71.27) and (87.13,66.37) .. (87.18,57.64) -- cycle ;
			\draw   (83.7,87.02) .. controls (83.7,87.02) and (83.7,87.02) .. (83.7,87.02) .. controls (83.7,87.02) and (83.7,87.02) .. (83.7,87.02) .. controls (83.65,94.26) and (77.57,100.08) .. (70.11,100.04) .. controls (62.66,99.99) and (56.65,94.08) .. (56.7,86.85) .. controls (56.75,79.61) and (62.83,73.79) .. (70.28,73.84) .. controls (79.28,73.9) and (84.34,73.39) .. (85.45,72.33) .. controls (84.34,73.39) and (83.76,78.29) .. (83.7,87.02) -- cycle ;
			\draw   (59.48,203.71) .. controls (59.48,196.99) and (64.92,191.55) .. (71.63,191.55) .. controls (78.34,191.55) and (83.79,196.99) .. (83.79,203.71) .. controls (83.79,210.42) and (78.34,215.86) .. (71.63,215.86) .. controls (64.92,215.86) and (59.48,210.42) .. (59.48,203.71) -- cycle ;
			\draw   (83.79,203.71) .. controls (83.79,196.99) and (89.23,191.55) .. (95.94,191.55) .. controls (102.65,191.55) and (108.1,196.99) .. (108.1,203.71) .. controls (108.1,210.42) and (102.65,215.86) .. (95.94,215.86) .. controls (89.23,215.86) and (83.79,210.42) .. (83.79,203.71) -- cycle ;
			\draw   (35.17,203.71) .. controls (35.17,196.99) and (40.61,191.55) .. (47.32,191.55) .. controls (54.03,191.55) and (59.48,196.99) .. (59.48,203.71) .. controls (59.48,210.42) and (54.03,215.86) .. (47.32,215.86) .. controls (40.61,215.86) and (35.17,210.42) .. (35.17,203.71) -- cycle ;
			\draw   (108.1,203.71) .. controls (108.1,196.99) and (113.54,191.55) .. (120.25,191.55) .. controls (126.96,191.55) and (132.41,196.99) .. (132.41,203.71) .. controls (132.41,210.42) and (126.96,215.86) .. (120.25,215.86) .. controls (113.54,215.86) and (108.1,210.42) .. (108.1,203.71) -- cycle ;
			\draw   (49.79,146.71) .. controls (49.79,139.99) and (55.23,134.55) .. (61.94,134.55) .. controls (68.65,134.55) and (74.1,139.99) .. (74.1,146.71) .. controls (74.1,153.42) and (68.65,158.86) .. (61.94,158.86) .. controls (55.23,158.86) and (49.79,153.42) .. (49.79,146.71) -- cycle ;
			\draw   (74.1,146.71) .. controls (74.1,139.99) and (79.54,134.55) .. (86.25,134.55) .. controls (92.96,134.55) and (98.41,139.99) .. (98.41,146.71) .. controls (98.41,153.42) and (92.96,158.86) .. (86.25,158.86) .. controls (79.54,158.86) and (74.1,153.42) .. (74.1,146.71) -- cycle ;
			\draw   (113.12,148.24) .. controls (120.36,148.18) and (126.27,154.17) .. (126.33,161.63) .. controls (126.39,169.09) and (120.58,175.18) .. (113.34,175.24) .. controls (106.11,175.3) and (100.19,169.3) .. (100.13,161.84) .. controls (100.06,152.85) and (99.48,147.8) .. (98.41,146.71) .. controls (99.48,147.8) and (104.39,148.31) .. (113.12,148.24) -- cycle ;
			\draw   (100.14,132.01) .. controls (100.14,132.01) and (100.14,132.01) .. (100.14,132.01) .. controls (100.17,124.78) and (106.25,118.94) .. (113.7,118.98) .. controls (121.16,119.02) and (127.17,124.92) .. (127.13,132.15) .. controls (127.1,139.39) and (121.02,145.22) .. (113.57,145.18) .. controls (104.56,145.14) and (99.51,145.64) .. (98.41,146.71) .. controls (99.51,145.64) and (100.08,140.74) .. (100.14,132.01) -- cycle ;
			\draw   (80.48,266.71) .. controls (80.48,259.99) and (85.92,254.55) .. (92.63,254.55) .. controls (99.34,254.55) and (104.79,259.99) .. (104.79,266.71) .. controls (104.79,273.42) and (99.34,278.86) .. (92.63,278.86) .. controls (85.92,278.86) and (80.48,273.42) .. (80.48,266.71) -- cycle ;
			\draw   (80.48,291.02) .. controls (80.48,284.3) and (85.92,278.86) .. (92.63,278.86) .. controls (99.34,278.86) and (104.79,284.3) .. (104.79,291.02) .. controls (104.79,297.73) and (99.34,303.17) .. (92.63,303.17) .. controls (85.92,303.17) and (80.48,297.73) .. (80.48,291.02) -- cycle ;
			\draw   (60.17,252.71) .. controls (60.17,245.99) and (65.61,240.55) .. (72.32,240.55) .. controls (79.03,240.55) and (84.48,245.99) .. (84.48,252.71) .. controls (84.48,259.42) and (79.03,264.86) .. (72.32,264.86) .. controls (65.61,264.86) and (60.17,259.42) .. (60.17,252.71) -- cycle ;
			\draw   (101.17,252.71) .. controls (101.17,245.99) and (106.61,240.55) .. (113.32,240.55) .. controls (120.03,240.55) and (125.48,245.99) .. (125.48,252.71) .. controls (125.48,259.42) and (120.03,264.86) .. (113.32,264.86) .. controls (106.61,264.86) and (101.17,259.42) .. (101.17,252.71) -- cycle ;
			\draw    (143,41) -- (143,303.33) ;
			\draw    (227.17,114) -- (32.17,114) ;
			\draw    (227.17,182) -- (32.17,182) ;
			\draw    (227.17,232) -- (32.17,232) ;
			
			\draw (95,50.69) node [anchor=north west][inner sep=0.75pt]   [align=left] {$\displaystyle 3$};
			\draw (66,78.69) node [anchor=north west][inner sep=0.75pt]   [align=left] {$\displaystyle 1$};
			\draw (67,49.69) node [anchor=north west][inner sep=0.75pt]   [align=left] {$\displaystyle 2$};
			\draw (108.79,244.5) node [anchor=north west][inner sep=0.75pt]   [align=left] {$\displaystyle 2$};
			\draw (67.24,244.5) node [anchor=north west][inner sep=0.75pt]   [align=left] {$\displaystyle 1$};
			\draw (87.43,283.5) node [anchor=north west][inner sep=0.75pt]   [align=left] {$\displaystyle 3$};
			\draw (66.79,194.5) node [anchor=north west][inner sep=0.75pt]   [align=left] {$\displaystyle 2$};
			\draw (42.24,194.5) node [anchor=north west][inner sep=0.75pt]   [align=left] {$\displaystyle 1$};
			\draw (90.43,194.5) node [anchor=north west][inner sep=0.75pt]   [align=left] {$\displaystyle 3$};
			\draw (81.79,138.5) node [anchor=north west][inner sep=0.75pt]   [align=left] {$\displaystyle 2$};
			\draw (57.24,138.5) node [anchor=north west][inner sep=0.75pt]   [align=left] {$\displaystyle 1$};
			\draw (108.43,124.5) node [anchor=north west][inner sep=0.75pt]   [align=left] {$\displaystyle 3$};
			\draw (114.43,194.5) node [anchor=north west][inner sep=0.75pt]   [align=left] {$\displaystyle 4$};
			\draw (107.43,152.5) node [anchor=north west][inner sep=0.75pt]   [align=left] {$\displaystyle 4$};
			\draw (93.43,77.5) node [anchor=north west][inner sep=0.75pt]   [align=left] {$\displaystyle 4$};
			\draw (87.43,258.5) node [anchor=north west][inner sep=0.75pt]   [align=left] {$\displaystyle 4$};
			\draw (145,65) node [anchor=north west][inner sep=0.75pt]   [align=left] {$\displaystyle < ( 1234)  >$};
			\draw (158,140) node [anchor=north west][inner sep=0.75pt]   [align=left] {$\displaystyle < 1 >$};
			\draw (145,199) node [anchor=north west][inner sep=0.75pt]   [align=left] {$\displaystyle < ( 14)( 23)  >$};
			\draw (145,260) node [anchor=north west][inner sep=0.75pt]   [align=left] {$\displaystyle < ( 123)  >$};

		\end{tikzpicture}
		
		\caption{On the left there are some cells of $\mathcal{C}_4/S^1$. On the right is represented the stabilizer of the cell respect to the $\Sigma_4$ action by relabelling the lobes. In the first row we see a $0$-dimensional cell, whose stabilizer is the cyclic group of order four generated by $(1234)\in\Sigma_4$. In the second row there is a $1$-cell, which has trivial stabilizer. The last two cells are two dimensional with stabilizer respectively a cyclic group of order two and three.}
		\label{fig:alcune celle e i rispettivi stabilizzatori}
	\end{figure}
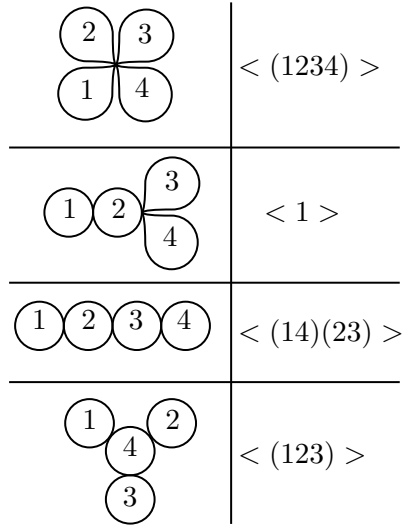
	To sum up, $\mathcal{C}_n/(S^1\times \Sigma_n)$ is built up by two types of cells, those with trivial stabilizer ("not symmetric cells") and those with non-trivial stabilizer ("symmetric cells"):
	\begin{enumerate}
		\item \textbf{Not symmetric cells:} take a cactus $c$ (without basepoint and labelling of the lobes) and fixes an arbitrary labelling of the lobes. Being a not symmetric cell means that the only permutation of $\Sigma_n$ which fix $c$ as a labelled cactus is the identity. The cell $\sigma(c)$ associated to $c$ is given by: 
		\[
		\sigma(c)\coloneqq \prod_{i=1}^{n}\Delta^{n_i-1}
		\]
		where $n_i$ is the number of intersection points of the $i$-th lobe with the other lobes.  
		As for the space of cacti $\mathcal{C}_n$ the parameters of $\Delta^{n_i-1}$ represent the length of the arcs between two intersection points. The boundary of $\sigma(c)$ is given by sending to zero some parameter, i.e. collapsing some arc.
		\item \textbf{Symmetric cells:} let $c$ be a cactus (without basepoint and labelling of the lobes) and fix an arbitrary labelling of the lobes. Being a symmetric cell means that the isotropy group of $c$ as a labelled cactus is a non trivial subroup $G_c\leq \Sigma_n$.  This cactus gives us a cell
		\[
		\sigma(c)\coloneqq \frac{\prod_{i=1}^{n}\Delta^{n_i-1}}{G_c}
		\]
		In other words, $\sigma(c)$ is the quotient of the cell associated to $c$ as a labelled cactus (with an arbitrary fixed labelling) by the action of its isotropy group.
		The boundary of $\sigma(c)$ is given by obtained by sending to zero some parameter, i.e. collapsing some arc.
	\end{enumerate}
	We conclude this paragraph by discussing some explicit examples.
	\begin{es}[n=3] There are only two cacti (unlabelled, without basepoint) with three lobes, let us call them $c_0$ and $c_1$ (see Figure \ref{fig:esempio con tre lobi}). $c_0$ is fixed by a rotation of $2\pi/3$, so its stabilizer is $\Z/3$. The corresponding cell is a point. $c_1$ is fixed by a rotation of $\pi$, so its stabilizer is $\Z/2$. The corresponding cell $\sigma(c_1)$ is obtained from $\Delta^1=\{(t_0,t_1)\in[0,1]^2\mid t_0+t_1=1\}$ quotienting by the relation $(t_0,t_1)\sim(t_1,t_0)$. Therefore $\mathcal{C}_3/(S^1\times \Sigma_3)$ is contractible.     
	\end{es}  
	\begin{figure}
		\centering

		\tikzset{every picture/.style={line width=0.75pt}} 
		
		\begin{tikzpicture}[x=0.75pt,y=0.75pt,yscale=-1,xscale=1]
			
			\draw  [fill={rgb, 255:red, 0; green, 0; blue, 0 }  ,fill opacity=1 ] (356.87,96.73) .. controls (356.87,94.65) and (358.56,92.96) .. (360.64,92.96) .. controls (362.73,92.96) and (364.41,94.65) .. (364.41,96.73) .. controls (364.41,98.82) and (362.73,100.51) .. (360.64,100.51) .. controls (358.56,100.51) and (356.87,98.82) .. (356.87,96.73) -- cycle ;
			\draw   (109.32,98.23) .. controls (109.32,98.23) and (109.32,98.23) .. (109.32,98.23) .. controls (109.32,98.23) and (109.32,98.23) .. (109.32,98.23) .. controls (102.23,99.66) and (95.28,94.89) .. (93.81,87.58) .. controls (92.33,80.28) and (96.89,73.19) .. (103.98,71.76) .. controls (111.07,70.33) and (118.02,75.09) .. (119.49,82.4) .. controls (121.27,91.22) and (122.16,95.63) .. (122.16,95.63) .. controls (122.16,95.63) and (117.88,96.5) .. (109.32,98.23) -- cycle ;
			\draw   (125.72,83.03) .. controls (125.72,83.03) and (125.72,83.03) .. (125.72,83.03) .. controls (127.68,76.06) and (135.09,72.06) .. (142.27,74.08) .. controls (149.44,76.11) and (153.67,83.39) .. (151.7,90.36) .. controls (149.74,97.32) and (142.33,101.32) .. (135.15,99.3) .. controls (126.49,96.86) and (122.16,95.63) .. (122.16,95.63) .. controls (122.16,95.63) and (123.35,91.43) .. (125.72,83.03) -- cycle ;
			\draw   (131.57,104.74) .. controls (131.57,104.74) and (131.57,104.74) .. (131.57,104.74) .. controls (136.77,109.78) and (136.79,118.2) .. (131.6,123.56) .. controls (126.42,128.91) and (118,129.18) .. (112.8,124.15) .. controls (107.6,119.12) and (107.59,110.69) .. (112.77,105.34) .. controls (119.04,98.87) and (122.17,95.64) .. (122.16,95.63) .. controls (122.17,95.64) and (125.3,98.67) .. (131.57,104.74) -- cycle ;
			\draw  [fill={rgb, 255:red, 0; green, 0; blue, 0 }  ,fill opacity=1 ] (158.39,96.7) .. controls (158.39,94.62) and (160.08,92.93) .. (162.17,92.93) .. controls (164.25,92.93) and (165.94,94.62) .. (165.94,96.7) .. controls (165.94,98.78) and (164.25,100.47) .. (162.17,100.47) .. controls (160.08,100.47) and (158.39,98.78) .. (158.39,96.7) -- cycle ;
			\draw   (384.32,98.23) .. controls (384.32,98.23) and (384.32,98.23) .. (384.32,98.23) .. controls (384.32,98.23) and (384.32,98.23) .. (384.32,98.23) .. controls (377.23,99.66) and (370.28,94.89) .. (368.81,87.58) .. controls (367.33,80.28) and (371.89,73.19) .. (378.98,71.76) .. controls (386.07,70.33) and (393.02,75.09) .. (394.49,82.4) .. controls (396.27,91.22) and (397.16,95.63) .. (397.16,95.63) .. controls (397.16,95.63) and (392.88,96.5) .. (384.32,98.23) -- cycle ;
			\draw   (400.72,83.03) .. controls (400.72,83.03) and (400.72,83.03) .. (400.72,83.03) .. controls (402.68,76.06) and (410.09,72.06) .. (417.27,74.08) .. controls (424.44,76.11) and (428.67,83.39) .. (426.7,90.36) .. controls (424.74,97.32) and (417.33,101.32) .. (410.15,99.3) .. controls (401.49,96.86) and (397.16,95.63) .. (397.16,95.63) .. controls (397.16,95.63) and (398.35,91.43) .. (400.72,83.03) -- cycle ;
			\draw   (406.57,104.74) .. controls (406.57,104.74) and (406.57,104.74) .. (406.57,104.74) .. controls (411.77,109.78) and (411.79,118.2) .. (406.6,123.56) .. controls (401.42,128.91) and (393,129.18) .. (387.8,124.15) .. controls (382.6,119.12) and (382.59,110.69) .. (387.77,105.34) .. controls (394.04,98.87) and (397.17,95.64) .. (397.16,95.63) .. controls (397.17,95.64) and (400.3,98.67) .. (406.57,104.74) -- cycle ;
			\draw   (162.17,96.7) .. controls (162.17,71.5) and (206.6,51.07) .. (261.4,51.07) .. controls (316.21,51.07) and (360.64,71.5) .. (360.64,96.7) .. controls (360.64,121.9) and (316.21,142.33) .. (261.4,142.33) .. controls (206.6,142.33) and (162.17,121.9) .. (162.17,96.7) -- cycle ;
			\draw   (247.48,32.71) .. controls (247.48,25.99) and (252.92,20.55) .. (259.63,20.55) .. controls (266.34,20.55) and (271.79,25.99) .. (271.79,32.71) .. controls (271.79,39.42) and (266.34,44.86) .. (259.63,44.86) .. controls (252.92,44.86) and (247.48,39.42) .. (247.48,32.71) -- cycle ;
			\draw   (271.79,32.71) .. controls (271.79,25.99) and (277.23,20.55) .. (283.94,20.55) .. controls (290.65,20.55) and (296.1,25.99) .. (296.1,32.71) .. controls (296.1,39.42) and (290.65,44.86) .. (283.94,44.86) .. controls (277.23,44.86) and (271.79,39.42) .. (271.79,32.71) -- cycle ;
			\draw   (223.17,32.71) .. controls (223.17,25.99) and (228.61,20.55) .. (235.32,20.55) .. controls (242.03,20.55) and (247.48,25.99) .. (247.48,32.71) .. controls (247.48,39.42) and (242.03,44.86) .. (235.32,44.86) .. controls (228.61,44.86) and (223.17,39.42) .. (223.17,32.71) -- cycle ;
			\draw    (162.17,96.7) -- (360.64,96.7) ;
			\draw   (247.48,81.71) .. controls (247.48,74.99) and (252.92,69.55) .. (259.63,69.55) .. controls (266.34,69.55) and (271.79,74.99) .. (271.79,81.71) .. controls (271.79,88.42) and (266.34,93.86) .. (259.63,93.86) .. controls (252.92,93.86) and (247.48,88.42) .. (247.48,81.71) -- cycle ;
			\draw   (271.79,81.71) .. controls (271.79,74.99) and (277.23,69.55) .. (283.94,69.55) .. controls (290.65,69.55) and (296.1,74.99) .. (296.1,81.71) .. controls (296.1,88.42) and (290.65,93.86) .. (283.94,93.86) .. controls (277.23,93.86) and (271.79,88.42) .. (271.79,81.71) -- cycle ;
			\draw   (223.17,81.71) .. controls (223.17,74.99) and (228.61,69.55) .. (235.32,69.55) .. controls (242.03,69.55) and (247.48,74.99) .. (247.48,81.71) .. controls (247.48,88.42) and (242.03,93.86) .. (235.32,93.86) .. controls (228.61,93.86) and (223.17,88.42) .. (223.17,81.71) -- cycle ;
			\draw   (247.48,157.71) .. controls (247.48,150.99) and (252.92,145.55) .. (259.63,145.55) .. controls (266.34,145.55) and (271.79,150.99) .. (271.79,157.71) .. controls (271.79,164.42) and (266.34,169.86) .. (259.63,169.86) .. controls (252.92,169.86) and (247.48,164.42) .. (247.48,157.71) -- cycle ;
			\draw   (271.79,157.71) .. controls (271.79,150.99) and (277.23,145.55) .. (283.94,145.55) .. controls (290.65,145.55) and (296.1,150.99) .. (296.1,157.71) .. controls (296.1,164.42) and (290.65,169.86) .. (283.94,169.86) .. controls (277.23,169.86) and (271.79,164.42) .. (271.79,157.71) -- cycle ;
			\draw   (223.17,157.71) .. controls (223.17,150.99) and (228.61,145.55) .. (235.32,145.55) .. controls (242.03,145.55) and (247.48,150.99) .. (247.48,157.71) .. controls (247.48,164.42) and (242.03,169.86) .. (235.32,169.86) .. controls (228.61,169.86) and (223.17,164.42) .. (223.17,157.71) -- cycle ;
			\draw   (172.32,266.23) .. controls (165.23,267.66) and (158.28,262.89) .. (156.81,255.58) .. controls (155.33,248.28) and (159.89,241.19) .. (166.98,239.76) .. controls (174.07,238.33) and (181.02,243.09) .. (182.49,250.4) .. controls (184.27,259.22) and (185.16,263.63) .. (185.16,263.63) .. controls (185.16,263.63) and (180.88,264.5) .. (172.32,266.23) -- cycle ;
			\draw   (188.72,251.03) .. controls (188.72,251.03) and (188.72,251.03) .. (188.72,251.03) .. controls (190.68,244.06) and (198.09,240.06) .. (205.27,242.08) .. controls (212.44,244.11) and (216.67,251.39) .. (214.7,258.36) .. controls (212.74,265.32) and (205.33,269.32) .. (198.15,267.3) .. controls (189.49,264.86) and (185.16,263.63) .. (185.16,263.63) .. controls (185.16,263.63) and (186.35,259.43) .. (188.72,251.03) -- cycle ;
			\draw   (194.57,272.74) .. controls (194.57,272.74) and (194.57,272.74) .. (194.57,272.74) .. controls (199.77,277.78) and (199.79,286.2) .. (194.6,291.56) .. controls (189.42,296.91) and (181,297.18) .. (175.8,292.15) .. controls (170.6,287.12) and (170.59,278.69) .. (175.77,273.34) .. controls (182.04,266.87) and (185.17,263.64) .. (185.16,263.63) .. controls (185.17,263.64) and (188.3,266.67) .. (194.57,272.74) -- cycle ;
			\draw  [fill={rgb, 255:red, 0; green, 0; blue, 0 }  ,fill opacity=1 ] (221.39,264.7) .. controls (221.39,262.62) and (223.08,260.93) .. (225.17,260.93) .. controls (227.25,260.93) and (228.94,262.62) .. (228.94,264.7) .. controls (228.94,266.78) and (227.25,268.47) .. (225.17,268.47) .. controls (223.08,268.47) and (221.39,266.78) .. (221.39,264.7) -- cycle ;
			\draw   (305.48,247.71) .. controls (305.48,240.99) and (310.92,235.55) .. (317.63,235.55) .. controls (324.34,235.55) and (329.79,240.99) .. (329.79,247.71) .. controls (329.79,254.42) and (324.34,259.86) .. (317.63,259.86) .. controls (310.92,259.86) and (305.48,254.42) .. (305.48,247.71) -- cycle ;
			\draw   (329.79,247.71) .. controls (329.79,240.99) and (335.23,235.55) .. (341.94,235.55) .. controls (348.65,235.55) and (354.1,240.99) .. (354.1,247.71) .. controls (354.1,254.42) and (348.65,259.86) .. (341.94,259.86) .. controls (335.23,259.86) and (329.79,254.42) .. (329.79,247.71) -- cycle ;
			\draw   (281.17,247.71) .. controls (281.17,240.99) and (286.61,235.55) .. (293.32,235.55) .. controls (300.03,235.55) and (305.48,240.99) .. (305.48,247.71) .. controls (305.48,254.42) and (300.03,259.86) .. (293.32,259.86) .. controls (286.61,259.86) and (281.17,254.42) .. (281.17,247.71) -- cycle ;
			\draw [color={rgb, 255:red, 208; green, 2; blue, 27 }  ,draw opacity=1 ][line width=1.5]    (261.4,47.07) -- (261.4,56.33) ;
			\draw    (225.17,264.7) -- (423.64,264.7) ;
			\draw [color={rgb, 255:red, 208; green, 2; blue, 27 }  ,draw opacity=1 ][line width=1.5]    (423.64,260.7) -- (423.64,269.97) ;
			
			\draw (132,78.46) node [anchor=north west][inner sep=0.75pt]   [align=left] {$\displaystyle 3$};
			\draw (101,78.46) node [anchor=north west][inner sep=0.75pt]   [align=left] {$\displaystyle 1$};
			\draw (116.77,105.34) node [anchor=north west][inner sep=0.75pt]   [align=left] {$\displaystyle 2$};
			\draw (392,103.46) node [anchor=north west][inner sep=0.75pt]   [align=left] {$\displaystyle 3$};
			\draw (376,78.46) node [anchor=north west][inner sep=0.75pt]   [align=left] {$\displaystyle 1$};
			\draw (408,78.46) node [anchor=north west][inner sep=0.75pt]   [align=left] {$\displaystyle 2$};
			\draw (230.79,23.5) node [anchor=north west][inner sep=0.75pt]   [align=left] {$\displaystyle 1$};
			\draw (254.24,23.5) node [anchor=north west][inner sep=0.75pt]   [align=left] {$\displaystyle 2$};
			\draw (278.43,23.5) node [anchor=north west][inner sep=0.75pt]   [align=left] {$\displaystyle 3$};
			\draw (230.79,72.5) node [anchor=north west][inner sep=0.75pt]   [align=left] {$\displaystyle 1$};
			\draw (254.24,72.5) node [anchor=north west][inner sep=0.75pt]   [align=left] {$\displaystyle 3$};
			\draw (278.43,72.5) node [anchor=north west][inner sep=0.75pt]   [align=left] {$\displaystyle 2$};
			\draw (230.79,148.5) node [anchor=north west][inner sep=0.75pt]   [align=left] {$\displaystyle 2$};
			\draw (254.24,148.5) node [anchor=north west][inner sep=0.75pt]   [align=left] {$\displaystyle 1$};
			\draw (278.43,148.5) node [anchor=north west][inner sep=0.75pt]   [align=left] {$\displaystyle 3$};
			\draw (13,89) node [anchor=north west][inner sep=0.75pt]   [align=left] {$\displaystyle \mathcal{C}_{3} /S^{1} =$};
			\draw (14,259) node [anchor=north west][inner sep=0.75pt]   [align=left] {$\displaystyle \mathcal{C}_{3} /\left( S^{1} \times \Sigma _{3}\right) =$};

		\end{tikzpicture}
		
		\caption{On top of this picture we see the CW-complex $\mathcal{C}_3/S^1$; the red segment indicate the middle of the $1$-cell. $\mathcal{C}_3/(S^1\times \Sigma_3)$ is depicted below and it is obtained from $\mathcal{C}_3/S^1$ by quotienting the $\Sigma_3$-action: the two 
			$0$-cells of $\mathcal{C}_3/S^1$ are identified, and the same happens for the three $1$-cells. Since the one cells have as stabilizer $\Z/2$, we have an additional identification: we need to glue together the two halves of any $1$-cell.}
		\label{fig:esempio con tre lobi}
	\end{figure}
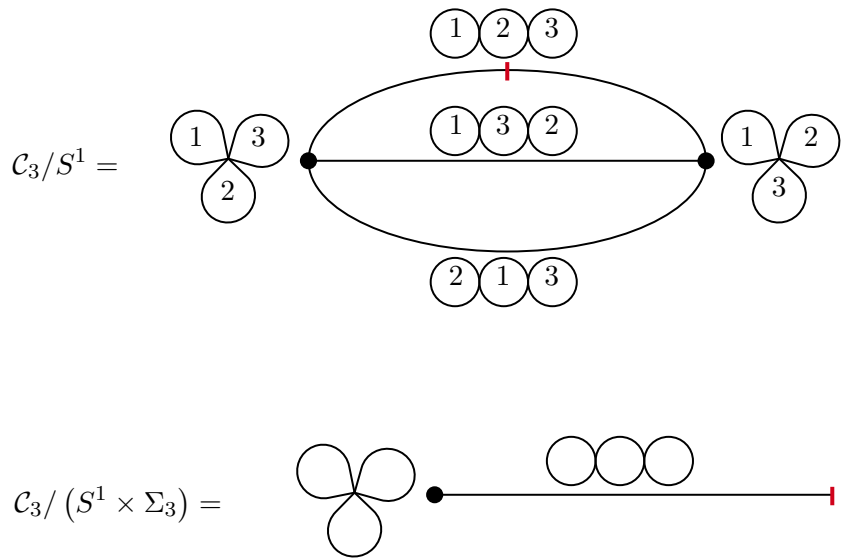
	\begin{es}[n=4] There are four cacti (unlabelled, without basepoint) with four lobes, let us call them $c_0$, $c_1$, $c_2$ and $c_3$ (see Figure \ref{fig:alcune celle e i rispettivi stabilizzatori} where such cacti are depicted with an arbitrary labelling of the lobes). $c_0$ is fixed by a rotation of $2\pi/4$, so its stabilizer is $\Z/4$. The corresponding cell is a point. $c_1$ is not fixed by any rotation, so its corresponding cell $\sigma(c_1)$ is a copy of $\Delta^1$.  $c_2$ is fixed by a rotation of $\pi$, so its stabilizer is $\Z/2$. The corresponding cell $\sigma(c_2)$ is obtained from $\Delta^1\times\Delta^1$ by imposing the relation
		\[
		(t_0,t_1)\times (s_0,s_1)\sim (s_1,s_0)\times (t_1,t_0)
		\]
		Finally, $c_3$ is fixed by a rotation of $2\pi/3$, so its stabilizer is $\Z/3$. The corresponding cell $\sigma(c_3)$ is obtained from $\Delta^2$ by quotienting the $\Z/3$-action. To be explicit, the generator of $\Z/3$ acts on $\Delta^2$ by permuting cyclically the coordinates $(t_0,t_1,t_2)$. 
	\end{es}

\section{Fundamental group}\label{sec:fundamental group and rational homology}

In this section we prove that $\mathcal{M}_{0,n+1}/\Sigma_n$ and $\overline{\M}_{0,n+1}/\Sigma_n$ are simply connected. First we recall some facts on the fundamental group of orbit spaces.
	\subsection{Fundamental group of orbit spaces}
	
	The goal of this paragraph is to explain the relation between $\pi_1(X)$ and $\pi_1(X/G)$, when $G$ is a topological group acting on a sufficiently nice topological space $X$. The following result can be found in the paper of B. Noohi \cite{Noohi}, which extends previous work by M.A. Armstrong \cite{Armstrong}. 
	
		\begin{thm}[\cite{Noohi}, p. 23]\label{thm:sequenza lunga per gruppo fondamentale orbit spaces}
		Let $G$ be a compact Lie group acting on a connected topological manifold $X$. We also assume that $X/G$ is semilocally simply connected. Fix a base point $x_0\in X$ and let $[x_0]$ be its image in $X/G$. Then we have an exact sequence
		\[
		\begin{tikzcd}
			&\pi_1(X,x_0)\arrow[r] &\pi_1(X/G,[x_0])\arrow[r]&\pi_0(G)/I\arrow[r] & 1
		\end{tikzcd}
		\]
		where $I\subset \pi_0(G)$ is the subgroup generated by the path components of $G$ containing an element $g$ which fix some point of $X$.
	\end{thm}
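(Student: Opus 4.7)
The plan is to construct an explicit connecting homomorphism $\delta\colon \pi_1(X/G,[x_0])\to \pi_0(G)/I$ by lifting loops from $X/G$ to paths in $X$, and then to verify surjectivity and exactness. The slice property is the essential tool here, because it gives the projection $p\colon X\to X/G$ local continuous sections around every orbit even though $p$ is not a covering map when the stabilizers are non-trivial.

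First I would establish a path-lifting lemma: given $\gamma\colon[0,1]\to X/G$ with $\gamma(0)=[x_0]$, cover the compact image by finitely many tubes of the form $p(G\cdot S_i)$, where $S_i$ is a slice at some $x_i\in X$. On each such tube $p$ admits a continuous section, and by subdividing $[0,1]$ finely enough and patching sections together (correcting at overlaps by elements of the relevant stabilizers) one produces a continuous lift $\tilde\gamma\colon[0,1]\to X$ with $\tilde\gamma(0)=x_0$. Since $p(\tilde\gamma(1))=[x_0]$, the endpoint equals $g\cdot x_0$ for some $g\in G$, and I would set $\delta([\gamma]):=[g]\in\pi_0(G)/I$. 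Surjectivity of $\delta$ is then inexpensive: for any $g\in G$ the space $X$ being path-connected furnishes a path in $X$ from $x_0$ to $g\cdot x_0$, whose projection is a loop in $X/G$ whose canonical lift hits $g\cdot x_0$, realising the class $[g]$.

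The technical core of the proof, and in my view the main obstacle, is well-definedness of $\delta$, both with respect to the ambiguity in the lift and with respect to homotopies of $\gamma$. Different choices of slice or of section on overlapping tubes modify $g$ by multiplication by elements of stabilizers $G_{x_i}$; the hypothesis that each $G_{x_i}$ is locally path-connected is used here to ensure that the path components of $G$ meeting these stabilizers are precisely the generators of $I$, so $[g]$ is unchanged in $\pi_0(G)/I$. For homotopy invariance I would lift a homotopy $H\colon[0,1]^2\to X/G$ by the same tube-subdivision scheme on the square, tracking how the endpoint of the lift changes as one sweeps across the square; each elementary modification multiplies the endpoint by a stabilizer element, hence by a representative of $I$. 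A closely related bookkeeping argument shows that $\delta$ respects concatenation of loops and is therefore a group homomorphism.

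Finally, for exactness at $\pi_1(X/G)$, the inclusion $\operatorname{im}(p_*)\subseteq \ker\delta$ is immediate, since any loop in $X$ lifts to itself with endpoint $x_0=e\cdot x_0$. For the reverse inclusion, suppose $\delta([\gamma])=1$, i.e.\ the endpoint of some lift $\tilde\gamma$ equals $g\cdot x_0$ with $g=h_1\cdots h_k$ and each $h_i$ in the same path component of $G$ as an element $g_i$ fixing a point $y_i\in X$. Using paths in $G$ from $h_i$ to $g_i$, combined with paths in $X$ from $x_0$ to $y_i$, I would splice explicit correction loops into $\tilde\gamma$ whose projections to $X/G$ are null-homotopic, converting $\tilde\gamma$ into a genuine loop at $x_0$ in $X$ whose class in $\pi_1(X,x_0)$ maps under $p_*$ to $[\gamma]$.
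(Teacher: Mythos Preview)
The paper does not prove this theorem: it is quoted from \cite{Noohi} (p.~23) and no proof is given. The only additional content in the paper is a remark immediately after the statement describing the two maps --- $p_*$ and the connecting map $\delta$ obtained by lifting a loop $\alpha$ in $X/G$ to a path $\tilde\alpha$ in $X$ and recording the component of the element $g_\alpha$ sending $x_0$ to $\tilde\alpha(1)$ --- exactly as in your construction. So there is no ``paper's own proof'' to compare against; your outline is consistent with the paper's description of $\delta$ and follows the standard strategy one finds in Armstrong's and Noohi's work.
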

	
\begin{oss}
		This Theorem holds with weaker hypothesis on $G$ and $X$. Indeed it suffices that $X$ is connected, locally path connected and semilocally simply connected and that the $G$-action has the \emph{slice property}. For details see \cite{Palais} and  \cite{Noohi}.
	\end{oss}

\subsection{Computations}
We now apply Theorem \ref{thm:sequenza lunga per gruppo fondamentale orbit spaces} to compute the fundamental group of $\overline{\mathcal{M}}_{0,n+1}/\Sigma_n$ and $\mathcal{M}_{0,n+1}/\Sigma_n$.
	\begin{thm}\label{thm: compattificati sono semplicemente connessi}
		The quotient $\overline{\mathcal{M}}_{0,n+1}/\Sigma_n$ is simply connected.
	\end{thm}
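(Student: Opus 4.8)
The plan is to apply the exact sequence of Theorem~\ref{thm:sequenza lunga per gruppo fondamentale orbit spaces} to the action of $G=\Sigma_n$ on $X=\overline{\M}_{0,n+1}$. First I would check the hypotheses: $\overline{\M}_{0,n+1}$ is a smooth complex projective variety, hence connected, locally path-connected and (being a manifold) semilocally simply connected, and its quotient $\overline{\M}_{0,n+1}/\Sigma_n$ is again a complex algebraic variety (in particular a finite CW complex), so also semilocally simply connected. Since $\Sigma_n$ is finite, the action automatically has the slice property, and every stabilizer is finite, hence discrete, hence locally path-connected. The theorem then yields the exact sequence
\[
\pi_1(\overline{\M}_{0,n+1})\longrightarrow \pi_1(\overline{\M}_{0,n+1}/\Sigma_n)\longrightarrow \pi_0(\Sigma_n)/I\longrightarrow 1,
\]
where, since $\Sigma_n$ is discrete we have $\pi_0(\Sigma_n)=\Sigma_n$, and $I\subseteq\Sigma_n$ is the subgroup generated by those permutations that fix at least one point of $\overline{\M}_{0,n+1}$.

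The next step is to recall that $\overline{\M}_{0,n+1}$ is simply connected: it is a smooth projective rational variety, and more concretely it is obtained from $(\mathbb{P}^1)^{n-2}$ by a sequence of blow-ups along smooth centers, which do not change the fundamental group; hence $\pi_1(\overline{\M}_{0,n+1})=1$. By exactness at $\pi_1(\overline{\M}_{0,n+1}/\Sigma_n)$, the map $\pi_1(\overline{\M}_{0,n+1}/\Sigma_n)\to\Sigma_n/I$ is then injective, so it suffices to prove $I=\Sigma_n$.

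For this I would show that every transposition $(i\,j)$ with $i,j\in\{1,\dots,n\}$ fixes a point of $\overline{\M}_{0,n+1}$; since the transpositions generate $\Sigma_n$, this forces $I=\Sigma_n$. The cases $n\leq 2$ are immediate, $\overline{\M}_{0,n+1}$ being a point. For $n\geq 3$ consider the stable $(n+1)$-pointed genus zero curve $C$ consisting of two copies of $\mathbb{P}^1$ glued at a node, one component (a ``bubble'') carrying exactly the marked points $p_i,p_j$ together with the node, and the other carrying $p_0$, the node and all remaining $p_k$ with $k\notin\{i,j\}$; the first component has $3$ special points and the second has $1+1+(n-2)=n\geq 3$, so $C$ is stable. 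If $p_i$ and $p_j$ are placed symmetrically on the bubble, the automorphism of the bubble that fixes the node and interchanges $p_i$ and $p_j$ (extended by the identity on the other component) is an automorphism of $C$ inducing the transposition $(i\,j)$ on the markings, so $[C]\in\overline{\M}_{0,n+1}$ is fixed by $(i\,j)$. Hence $I=\Sigma_n$, the group $\Sigma_n/I$ is trivial, and the exact sequence gives $\pi_1(\overline{\M}_{0,n+1}/\Sigma_n)=1$.

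I expect the only genuinely nontrivial point to be this last construction: verifying that the prescribed automorphism of the nodal curve descends correctly and realizes exactly the transposition (and handling the small-$n$ stability bookkeeping), together with invoking the simple connectivity of $\overline{\M}_{0,n+1}$ itself. Everything else is a direct application of Theorem~\ref{thm:sequenza lunga per gruppo fondamentale orbit spaces}.
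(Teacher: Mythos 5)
Your proposal is correct and follows essentially the same route as the paper: apply the exact sequence of Theorem~\ref{thm:sequenza lunga per gruppo fondamentale orbit spaces}, use simple connectivity of $\overline{\M}_{0,n+1}$, and observe that every transposition $(i,j)$ fixes a boundary point of $\overline{\M}_{0,n+1}$, namely the stable curve where $p_i,p_j$ sit alone on a bubble, so that $I=\Sigma_n$. You merely spell out the hypothesis-checking and the stable-curve construction in more detail than the paper does, but the argument is the same.
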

	\begin{proof}
		
		Since $\overline{\mathcal{M}}_{0,n+1}$ is simply connected, Theorem \ref{thm:sequenza lunga per gruppo fondamentale orbit spaces} gives us an isomorphism between $\pi_1(\overline{\mathcal{M}}_{0,n+1}/\Sigma_n)$ and $\Sigma_n/I$. Let $(i,j)\in \Sigma_n$ be a transposition. It has at least one fixed point since it fixes a configuration whose clustering is of the form $((i,j),1\dots,\hat{i},\dots,\hat{j},\dots,n)$. Therefore the subgroup $I$ contains all the transpositions, so it must be $\Sigma_n$ itself.  This concludes the proof.
	\end{proof}
\begin{thm}\label{thm: quozienti stretti sono semplicemente connessi}
    The quotient $\mathcal{M}_{0,n+1}/\Sigma_n$ is simply connected.
\end{thm}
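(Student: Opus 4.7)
I plan to apply Theorem \ref{thm:sequenza lunga per gruppo fondamentale orbit spaces} not to the $\Sigma_n$-action on $\mathcal{M}_{0,n+1}$ (as in the compactified case) but to the \emph{connected} group $G=\mathbb{C}\rtimes\mathbb{C}^*$ acting on $C_n(\mathbb{C})$, using the identification $\mathcal{M}_{0,n+1}/\Sigma_n\cong C_n(\mathbb{C})/G$ of Proposition \ref{prop:quotient as unordered configuration}. This choice bypasses the need to analyse the image of $\pi_1(\mathcal{M}_{0,n+1})$ in the quotient (which is non-trivial), and also sidesteps the fact that for $n\geq 4$ no transposition in $\Sigma_n$ fixes a point of $\mathcal{M}_{0,n+1}$. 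Since $G$ is path-connected, $\pi_0(G)=1$ and the sequence reduces to the surjection
\[
B_n=\pi_1(C_n(\mathbb{C}))\twoheadrightarrow\pi_1(\mathcal{M}_{0,n+1}/\Sigma_n),
\]
so the theorem is equivalent to showing that every Artin generator $\sigma_i\in B_n$ lies in the kernel.

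To produce the necessary relations I would use configurations with rotational symmetry. The $n$-th roots of unity $\mu_n\subset\mathbb{C}$ have stabiliser $\mathbb{Z}/n\subset S^1\subset G$: the loop in $C_n(\mathbb{C})$ obtained by rotating $\mu_n$ through angle $2\pi/n$ is contained in a single $G$-orbit, hence is null-homotopic in $C_n(\mathbb{C})/G$; under $\pi_1(C_n(\mathbb{C}))=B_n$ it represents the cyclic-shift braid $\delta_n=\sigma_1\sigma_2\cdots\sigma_{n-1}$, and therefore $\delta_n=1$ in $\pi_1(\mathcal{M}_{0,n+1}/\Sigma_n)$. The analogous construction applied to the $\mathbb{Z}/(n-1)$-symmetric configuration $\{0\}\cup\mu_{n-1}$ (with the origin playing the role of the unique rotation-fixed point) produces the element $\delta'_{n-1}=\sigma_2\sigma_3\cdots\sigma_{n-1}$ and forces $\delta'_{n-1}=1$ in the quotient as well. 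Since
\[
\delta_n\cdot(\delta'_{n-1})^{-1}=\sigma_1\sigma_2\cdots\sigma_{n-1}\cdot\sigma_{n-1}^{-1}\cdots\sigma_2^{-1}=\sigma_1,
\]
this forces $\sigma_1=1$. Every Artin generator is conjugate to $\sigma_1$ in $B_n$ (via the standard identity $(\sigma_i\sigma_{i+1})\sigma_i(\sigma_i\sigma_{i+1})^{-1}=\sigma_{i+1}$), so every $\sigma_i$ vanishes and $\pi_1(\mathcal{M}_{0,n+1}/\Sigma_n)=1$.

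The main obstacle will be the geometric identification of the two rotation loops with the specific braid words $\delta_n$ and $\delta'_{n-1}$: this requires some careful bookkeeping of base-points and orientations in $C_n(\mathbb{C})$, but it is a concrete verification, after which the rest of the argument is purely algebraic. Should any subtlety arise, the cactus CW-model of Section \ref{sec: a combinatorial model using cacti} provides an explicit cellular presentation of $\pi_1(\mathcal{M}_{0,n+1}/\Sigma_n)$ that can be used as a fall-back.
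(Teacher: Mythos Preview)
Your overall strategy coincides with the paper's: identify $\mathcal{M}_{0,n+1}/\Sigma_n$ with a quotient of $C_n(\C)$ by a connected group, obtain a surjection $B_n\twoheadrightarrow\pi_1$, and then kill the Artin generators using loops that rotate symmetric configurations and hence lie in a single orbit. The first rotation loop is also the same one the paper uses, and your identification $\alpha=\delta_n=\sigma_1\cdots\sigma_{n-1}$ is correct.

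The gap is in the second loop. The braid obtained by rotating $\{0\}\cup\mu_{n-1}$ through $2\pi/(n-1)$ is \emph{not} $\sigma_2\sigma_3\cdots\sigma_{n-1}$. Because the fixed point $0$ sits at the \emph{centre} of the rotating $(n-1)$-gon, each of the $n-1$ outer strands winds once around the central strand during a full $2\pi$ rotation; consequently $\beta^{\,n-1}$ is the full twist $\Delta^2=(\sigma_1\cdots\sigma_{n-1})^n$, and the image of $\beta$ in $H_1(C_n(\C);\Z)\cong\Z$ is $n$, not $n-2$. (For $n=3$ one computes $\beta=\sigma_1\sigma_2\sigma_1$, not $\sigma_2$.) So the identity $\delta_n\cdot(\delta'_{n-1})^{-1}=\sigma_1$ that your argument hinges on is false in $B_n$, and the algebraic step collapses. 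No alternative placement of the fixed point avoids this: any $\Z/(n-1)$-stabiliser in $\C\rtimes\C^*$ is rotation about its unique fixed point, which must then be one of the $n$ configuration points and hence lies inside the orbit of the others.

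The paper repairs exactly this difficulty by inserting an extra step: it first shows that $\pi_1(C_n(\C)/S^1)$ is \emph{abelian} (using that the half-twist $\Delta$ lies in a single $S^1$-orbit, so $\Delta=1$ in the quotient, and then manipulating the braid relations), and only then uses the two rotation loops. In the abelianisation the precise braid words no longer matter, only their images $(n-1)\sigma_1$ and $n\sigma_1$ in $H_1$; from $(n-1)\sigma_1=n\sigma_1=0$ one gets $\sigma_1=0$. Your conjugacy observation (all $\sigma_i$ are conjugate) would let you skip the abelian step \emph{if} you had the correct word for $\beta$ and could cancel it against $\alpha$ to obtain a conjugate of $\sigma_1$; that is feasible (e.g.\ in a suitable convention $\beta=\sigma_1^2\sigma_2\cdots\sigma_{n-1}$, and then $\alpha^{-1}\beta$ is a conjugate of $\sigma_1$), but it requires a careful geometric computation you have not carried out.
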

\begin{proof}
     The space $\mathcal{M}_{0,n+1}/\Sigma_n$ is homotopy equivalent to $C_n(\C)/S^1$, therefore it suffice to prove statement for this last space. Let us denote by $p:C_n(\C)\to C_n(\C)/S^1$ the quotient map. The proof follows several steps:
     \begin{enumerate}
         \item Since $S^1$ is connected we get that $\pi_0(G)/I=1$ in the exact sequence of Theorem \ref{thm:sequenza lunga per gruppo fondamentale orbit spaces}. This implies that $p_*: B_n\to \pi_1(C_n(\C)/S^1)$ is surjective. Therefore we have a short exact sequence
         \[
         \begin{tikzcd}
             & 1\arrow[r] &Ker(p_*) \arrow[r,hook]& B_n \arrow[r, "p_*"] &\pi_1(C_n(\C)/S^1) \arrow[r] & 1
         \end{tikzcd}
         \]
        
         \item $\pi_1(C_n(\C)/S^1)$ is abelian: the previous step tell us that $\pi_1(C_n(\C)/S^1)$ is a quotient of the braid group, in particular it is generated by elements $\sigma_1,\dots,\sigma_{n-1}$  and in addition to the relations of $B_n$ there are some extra relations coming from $Ker(p_*)$. Let us make explicit some of these relations: the braid $\Delta\coloneqq \sigma_1(\sigma_2\sigma_1)\dots(\sigma_{n-1}\sigma_{n-2}\dots\sigma_1)$ belongs to $Ker(p_*)$ since it is the given by a rotation of $\pi$. Therefore we have the relation 
         \[
         \sigma_i=\sigma_i\Delta=\Delta\sigma_{n-i}=\sigma_{n-i}
         \]
         Now suppose $n$ is odd. The previous relation enable us to prove that $\sigma_i$ and $\sigma_{i+1}$ commute in $\pi_1(C_n(\C)/S^1)$:
         \[
         \sigma_i\sigma_{i+1}= \sigma_{n-i}\sigma_{i+1}=\sigma_{i+1}\sigma_{n-i}=\sigma_{i+1}\sigma_i
         \]
         where the middle equality holds because $n$ is odd and therefore $n-i\neq i$ for each $i=1,\dots,n-1$. To get the statement in the case $n=2k$ we can do the same procedure to show that $\sigma_i$ and $\sigma_{i+1}$ commute for each $i\neq k$. So it remains to prove the equality $ \sigma_k\sigma_{k+1}=\sigma_{k+1}\sigma_{k}$: combining $\sigma_{k+1}\sigma_{k+2}\sigma_{k+1}=\sigma_{k+2}\sigma_{k+1}\sigma_{k+2}$ and  $ \sigma_{k+2}\sigma_{k+1}=\sigma_{k+1}\sigma_{k+2}$  we get that $ \sigma_{k+2}=\sigma_{k+1}$. Therefore $ \sigma_k\sigma_{k+1}=\sigma_k\sigma_{k+2}=\sigma_{k+2}\sigma_{k}=\sigma_{k+1}\sigma_{k}$.
         \item $\pi_1(C_n(\C)/S^1)$ is generated by $\sigma_1$: by the previous point $\pi_1(C_n(\C)/S^1)$ is abelian. Combining this fact with the braid relation $\sigma_{i}\sigma_{i+1}\sigma_{i}=\sigma_{i+1}\sigma_{i}\sigma_{i+1}$ we get $\sigma_i=\sigma_{i+1}$ for all $i=1,\dots,n-2$. Thus $\pi_1(C_n(\C)/S^1)$ is generated by $\sigma_1$.
         \item $\pi_1(C_n(\C)/S^1)$ is the trivial group: by the previous discussion we know that $\pi_1(C_n(\C)/S^1)$ is an abelian group, so it is isomorphic to $H_1(C_n(\C)/S^1;\Z)$ by Hurewicz. Moreover $\pi_1(C_n(\C)/S^1)$ it is generated by $\sigma_1$. So it suffice to show that $\sigma_1=0$. We proceed  as follows: consider the $n$-th roots of unity $\{\zeta_1,\dots,\zeta_n\}\in C_n(\C)$. Now take the loop 
         \begin{align*}
             \alpha:[0,1]&\to C_n(\C)\\
             t&\mapsto \{e^{2t\pi i/n}\zeta_1,\dots,e^{2t\pi i/n}\zeta_n\}
         \end{align*}
In plain words $\alpha$ rotates the $n$-agon $\{\zeta_1,\dots,\zeta_n\}$ counter-clockwise between $0$ and $2\pi/n$ degrees. It is easy to see that this loop represent the class $(n-1)\sigma_1\in H_1(C_n(\C);\Z)$ (see figure \ref{fig:laccio dell'n agono} for a picture). Therefore $p_*(\alpha)$ is $(n-1)$ times the generator of $H_1(C_n(\C)/S^1;\Z)$. But $p_*(\alpha)$ is a constant loop, so it is the zero class in homology. Therefore we get the equation $(n-1)\sigma_1=0$ in $H_1(C_n(\C)/S^1;\Z)$. Similarly, let $\{\zeta_1,\dots,\zeta_{n-1}\}$ be the set of $(n-1)$-th roots of unity. Consider the loop
  \begin{align*}
             \beta:[0,1]&\to C_n(\C)\\
             t&\mapsto \{e^{2t\pi i/(n-1)}\zeta_1,\dots,e^{2t\pi i/(n-1)}\zeta_{n-1},0\}
         \end{align*}
In plain words $\alpha$ rotates the configuration $\{\zeta_1,\dots,\zeta_{n-1},0\}$ counter-clockwise between $0$ and $2\pi/(n-1)$ degrees. It is easy to see that this loop represent the class $n\sigma_1\in H_1(C_n(\C);\Z)$ (see Figure \ref{fig:laccio dell'n agono}). Therefore $p_*(\alpha)$ is $n$ times the generator of $H_1(C_n(\C)/S^1;\Z)$. But $p_*(\alpha)$ is a constant loop, so it is the zero class in homology. Therefore we get the equation $n\sigma_1=0$ in $H_1(C_n(\C)/S^1;\Z)$. Finally we can conclude: we know that $\pi_1(C_n(\C)/S^1)\cong H_1(C_n(\C)/S^1;\Z)$ is a cyclic group with generator $\sigma_1$. However $n\sigma_1=0=(n-1)\sigma_1$, therefore $\sigma_1=0$.

\begin{figure}
   \centering
    \caption{On the left the loop $\alpha$, on the right $\beta$.}
    \label{fig:laccio dell'n agono}

\tikzset{every picture/.style={line width=0.75pt}} 

\begin{tikzpicture}[x=0.75pt,y=0.75pt,yscale=-1,xscale=1]

\draw  [dash pattern={on 4.5pt off 4.5pt}] (139.88,39) -- (204.36,85.92) -- (179.67,161.74) -- (99.92,161.68) -- (75.33,85.83) -- cycle ;
\draw  [dash pattern={on 4.5pt off 4.5pt}] (139.88,264) -- (204.36,310.92) -- (179.67,386.74) -- (99.92,386.68) -- (75.33,310.83) -- cycle ;
\draw    (99.92,161.68) .. controls (117.67,211.33) and (178.67,212) .. (179.67,383.41) ;
\draw [shift={(179.67,383.41)}, rotate = 269.67] [color={rgb, 255:red, 0; green, 0; blue, 0 }  ][line width=0.75]    (10.93,-3.29) .. controls (6.95,-1.4) and (3.31,-0.3) .. (0,0) .. controls (3.31,0.3) and (6.95,1.4) .. (10.93,3.29)   ;
\draw  [fill={rgb, 255:red, 0; green, 0; blue, 0 }  ,fill opacity=1 ] (136.55,39) .. controls (136.55,37.16) and (138.04,35.67) .. (139.88,35.67) .. controls (141.72,35.67) and (143.21,37.16) .. (143.21,39) .. controls (143.21,40.84) and (141.72,42.33) .. (139.88,42.33) .. controls (138.04,42.33) and (136.55,40.84) .. (136.55,39) -- cycle ;
\draw  [fill={rgb, 255:red, 0; green, 0; blue, 0 }  ,fill opacity=1 ] (201.03,85.92) .. controls (201.03,84.08) and (202.52,82.58) .. (204.36,82.58) .. controls (206.2,82.58) and (207.69,84.08) .. (207.69,85.92) .. controls (207.69,87.76) and (206.2,89.25) .. (204.36,89.25) .. controls (202.52,89.25) and (201.03,87.76) .. (201.03,85.92) -- cycle ;
\draw  [fill={rgb, 255:red, 0; green, 0; blue, 0 }  ,fill opacity=1 ] (176.33,161.74) .. controls (176.33,159.9) and (177.83,158.41) .. (179.67,158.41) .. controls (181.51,158.41) and (183,159.9) .. (183,161.74) .. controls (183,163.58) and (181.51,165.07) .. (179.67,165.07) .. controls (177.83,165.07) and (176.33,163.58) .. (176.33,161.74) -- cycle ;
\draw  [fill={rgb, 255:red, 0; green, 0; blue, 0 }  ,fill opacity=1 ] (96.59,161.68) .. controls (96.59,159.84) and (98.08,158.35) .. (99.92,158.35) .. controls (101.76,158.35) and (103.26,159.84) .. (103.26,161.68) .. controls (103.26,163.52) and (101.76,165.02) .. (99.92,165.02) .. controls (98.08,165.02) and (96.59,163.52) .. (96.59,161.68) -- cycle ;
\draw  [fill={rgb, 255:red, 0; green, 0; blue, 0 }  ,fill opacity=1 ] (72,85.83) .. controls (72,83.99) and (73.49,82.49) .. (75.33,82.49) .. controls (77.18,82.49) and (78.67,83.99) .. (78.67,85.83) .. controls (78.67,87.67) and (77.18,89.16) .. (75.33,89.16) .. controls (73.49,89.16) and (72,87.67) .. (72,85.83) -- cycle ;
\draw  [fill={rgb, 255:red, 0; green, 0; blue, 0 }  ,fill opacity=1 ] (136.55,264) .. controls (136.55,262.16) and (138.04,260.67) .. (139.88,260.67) .. controls (141.72,260.67) and (143.21,262.16) .. (143.21,264) .. controls (143.21,265.84) and (141.72,267.33) .. (139.88,267.33) .. controls (138.04,267.33) and (136.55,265.84) .. (136.55,264) -- cycle ;
\draw  [fill={rgb, 255:red, 0; green, 0; blue, 0 }  ,fill opacity=1 ] (201.03,310.92) .. controls (201.03,309.08) and (202.52,307.58) .. (204.36,307.58) .. controls (206.2,307.58) and (207.69,309.08) .. (207.69,310.92) .. controls (207.69,312.76) and (206.2,314.25) .. (204.36,314.25) .. controls (202.52,314.25) and (201.03,312.76) .. (201.03,310.92) -- cycle ;
\draw  [fill={rgb, 255:red, 0; green, 0; blue, 0 }  ,fill opacity=1 ] (176.33,386.74) .. controls (176.33,384.9) and (177.83,383.41) .. (179.67,383.41) .. controls (181.51,383.41) and (183,384.9) .. (183,386.74) .. controls (183,388.58) and (181.51,390.07) .. (179.67,390.07) .. controls (177.83,390.07) and (176.33,388.58) .. (176.33,386.74) -- cycle ;
\draw  [fill={rgb, 255:red, 0; green, 0; blue, 0 }  ,fill opacity=1 ] (96.59,386.68) .. controls (96.59,384.84) and (98.08,383.35) .. (99.92,383.35) .. controls (101.76,383.35) and (103.26,384.84) .. (103.26,386.68) .. controls (103.26,388.52) and (101.76,390.02) .. (99.92,390.02) .. controls (98.08,390.02) and (96.59,388.52) .. (96.59,386.68) -- cycle ;
\draw  [fill={rgb, 255:red, 0; green, 0; blue, 0 }  ,fill opacity=1 ] (72,310.83) .. controls (72,308.99) and (73.49,307.49) .. (75.33,307.49) .. controls (77.18,307.49) and (78.67,308.99) .. (78.67,310.83) .. controls (78.67,312.67) and (77.18,314.16) .. (75.33,314.16) .. controls (73.49,314.16) and (72,312.67) .. (72,310.83) -- cycle ;
\draw    (179.67,161.74) .. controls (191.61,211.09) and (205.53,228.82) .. (204.38,306.41) ;
\draw [shift={(204.36,307.58)}, rotate = 270.95] [color={rgb, 255:red, 0; green, 0; blue, 0 }  ][line width=0.75]    (10.93,-3.29) .. controls (6.95,-1.4) and (3.31,-0.3) .. (0,0) .. controls (3.31,0.3) and (6.95,1.4) .. (10.93,3.29)   ;
\draw    (75.33,85.83) .. controls (68.7,237.57) and (109.26,280.57) .. (100.07,381.82) ;
\draw [shift={(99.92,383.35)}, rotate = 275.44] [color={rgb, 255:red, 0; green, 0; blue, 0 }  ][line width=0.75]    (10.93,-3.29) .. controls (6.95,-1.4) and (3.31,-0.3) .. (0,0) .. controls (3.31,0.3) and (6.95,1.4) .. (10.93,3.29)   ;
\draw    (139.88,39) .. controls (142.67,131.89) and (140.67,153.89) .. (127.67,198.11) ;
\draw    (204.36,85.92) .. controls (205.67,142.89) and (202.67,158.89) .. (190.67,189.89) ;
\draw    (186.36,195.92) .. controls (176.67,214.89) and (164.67,210.89) .. (151.67,231.89) ;
\draw    (85.67,259.74) .. controls (74.83,280.57) and (77.58,270.86) .. (75.44,305.86) ;
\draw [shift={(75.33,307.49)}, rotate = 273.61] [color={rgb, 255:red, 0; green, 0; blue, 0 }  ][line width=0.75]    (10.93,-3.29) .. controls (6.95,-1.4) and (3.31,-0.3) .. (0,0) .. controls (3.31,0.3) and (6.95,1.4) .. (10.93,3.29)   ;
\draw    (124.67,204.89) .. controls (116.67,225.89) and (105.67,231.89) .. (92.67,252.89) ;
\draw    (147.67,235.89) .. controls (144.8,240.81) and (144.67,246.36) .. (140.5,258.85) ;
\draw [shift={(139.88,260.67)}, rotate = 289.16] [color={rgb, 255:red, 0; green, 0; blue, 0 }  ][line width=0.75]    (10.93,-3.29) .. controls (6.95,-1.4) and (3.31,-0.3) .. (0,0) .. controls (3.31,0.3) and (6.95,1.4) .. (10.93,3.29)   ;
\draw  [dash pattern={on 4.5pt off 4.5pt}] (330.16,87.65) -- (418.02,38.16) -- (467.5,126.02) -- (379.65,175.5) -- cycle ;
\draw  [dash pattern={on 4.5pt off 4.5pt}] (330.16,312.65) -- (418.02,263.16) -- (467.5,351.02) -- (379.65,400.5) -- cycle ;
\draw  [fill={rgb, 255:red, 0; green, 0; blue, 0 }  ,fill opacity=1 ] (414.69,38.16) .. controls (414.69,36.32) and (416.18,34.83) .. (418.02,34.83) .. controls (419.86,34.83) and (421.35,36.32) .. (421.35,38.16) .. controls (421.35,40) and (419.86,41.5) .. (418.02,41.5) .. controls (416.18,41.5) and (414.69,40) .. (414.69,38.16) -- cycle ;
\draw  [fill={rgb, 255:red, 0; green, 0; blue, 0 }  ,fill opacity=1 ] (326.83,87.65) .. controls (326.83,85.81) and (328.32,84.31) .. (330.16,84.31) .. controls (332,84.31) and (333.5,85.81) .. (333.5,87.65) .. controls (333.5,89.49) and (332,90.98) .. (330.16,90.98) .. controls (328.32,90.98) and (326.83,89.49) .. (326.83,87.65) -- cycle ;
\draw  [fill={rgb, 255:red, 0; green, 0; blue, 0 }  ,fill opacity=1 ] (464.17,126.02) .. controls (464.17,124.18) and (465.66,122.69) .. (467.5,122.69) .. controls (469.34,122.69) and (470.84,124.18) .. (470.84,126.02) .. controls (470.84,127.86) and (469.34,129.35) .. (467.5,129.35) .. controls (465.66,129.35) and (464.17,127.86) .. (464.17,126.02) -- cycle ;
\draw  [fill={rgb, 255:red, 0; green, 0; blue, 0 }  ,fill opacity=1 ] (376.31,175.5) .. controls (376.31,173.66) and (377.81,172.17) .. (379.65,172.17) .. controls (381.49,172.17) and (382.98,173.66) .. (382.98,175.5) .. controls (382.98,177.34) and (381.49,178.84) .. (379.65,178.84) .. controls (377.81,178.84) and (376.31,177.34) .. (376.31,175.5) -- cycle ;
\draw  [fill={rgb, 255:red, 0; green, 0; blue, 0 }  ,fill opacity=1 ] (395.5,106.83) .. controls (395.5,104.99) and (396.99,103.5) .. (398.83,103.5) .. controls (400.67,103.5) and (402.17,104.99) .. (402.17,106.83) .. controls (402.17,108.67) and (400.67,110.17) .. (398.83,110.17) .. controls (396.99,110.17) and (395.5,108.67) .. (395.5,106.83) -- cycle ;
\draw  [fill={rgb, 255:red, 0; green, 0; blue, 0 }  ,fill opacity=1 ] (376.31,400.5) .. controls (376.31,398.66) and (377.81,397.17) .. (379.65,397.17) .. controls (381.49,397.17) and (382.98,398.66) .. (382.98,400.5) .. controls (382.98,402.34) and (381.49,403.84) .. (379.65,403.84) .. controls (377.81,403.84) and (376.31,402.34) .. (376.31,400.5) -- cycle ;
\draw  [fill={rgb, 255:red, 0; green, 0; blue, 0 }  ,fill opacity=1 ] (464.17,351.02) .. controls (464.17,349.18) and (465.66,347.69) .. (467.5,347.69) .. controls (469.34,347.69) and (470.84,349.18) .. (470.84,351.02) .. controls (470.84,352.86) and (469.34,354.35) .. (467.5,354.35) .. controls (465.66,354.35) and (464.17,352.86) .. (464.17,351.02) -- cycle ;
\draw  [fill={rgb, 255:red, 0; green, 0; blue, 0 }  ,fill opacity=1 ] (326.83,312.65) .. controls (326.83,310.81) and (328.32,309.31) .. (330.16,309.31) .. controls (332,309.31) and (333.5,310.81) .. (333.5,312.65) .. controls (333.5,314.49) and (332,315.98) .. (330.16,315.98) .. controls (328.32,315.98) and (326.83,314.49) .. (326.83,312.65) -- cycle ;
\draw  [fill={rgb, 255:red, 0; green, 0; blue, 0 }  ,fill opacity=1 ] (414.69,263.16) .. controls (414.69,261.32) and (416.18,259.83) .. (418.02,259.83) .. controls (419.86,259.83) and (421.35,261.32) .. (421.35,263.16) .. controls (421.35,265) and (419.86,266.5) .. (418.02,266.5) .. controls (416.18,266.5) and (414.69,265) .. (414.69,263.16) -- cycle ;
\draw  [fill={rgb, 255:red, 0; green, 0; blue, 0 }  ,fill opacity=1 ] (395.5,331.83) .. controls (395.5,329.99) and (396.99,328.5) .. (398.83,328.5) .. controls (400.67,328.5) and (402.17,329.99) .. (402.17,331.83) .. controls (402.17,333.67) and (400.67,335.17) .. (398.83,335.17) .. controls (396.99,335.17) and (395.5,333.67) .. (395.5,331.83) -- cycle ;
\draw    (330.16,87.65) .. controls (323.5,240.15) and (379.67,208.67) .. (379.65,397.17) ;
\draw [shift={(379.65,397.17)}, rotate = 270.01] [color={rgb, 255:red, 0; green, 0; blue, 0 }  ][line width=0.75]    (10.93,-3.29) .. controls (6.95,-1.4) and (3.31,-0.3) .. (0,0) .. controls (3.31,0.3) and (6.95,1.4) .. (10.93,3.29)   ;
\draw    (379.65,175.5) .. controls (404.67,242.67) and (469.67,190.67) .. (467.5,347.69) ;
\draw [shift={(467.5,347.69)}, rotate = 270.79] [color={rgb, 255:red, 0; green, 0; blue, 0 }  ][line width=0.75]    (10.93,-3.29) .. controls (6.95,-1.4) and (3.31,-0.3) .. (0,0) .. controls (3.31,0.3) and (6.95,1.4) .. (10.93,3.29)   ;
\draw    (466.5,127.02) .. controls (467.81,183.99) and (442.67,190.67) .. (427.67,219.67) ;
\draw    (424.67,225.67) .. controls (421.77,230.64) and (418.88,240.91) .. (418.1,257.95) ;
\draw [shift={(418.02,259.83)}, rotate = 272.04] [color={rgb, 255:red, 0; green, 0; blue, 0 }  ][line width=0.75]    (10.93,-3.29) .. controls (6.95,-1.4) and (3.31,-0.3) .. (0,0) .. controls (3.31,0.3) and (6.95,1.4) .. (10.93,3.29)   ;
\draw    (398.83,106.83) .. controls (413.67,160.67) and (412.67,189.67) .. (404.67,202.67) ;
\draw    (399.67,209.67) .. controls (389.77,235.41) and (395.55,282.71) .. (398.74,327.15) ;
\draw [shift={(398.83,328.5)}, rotate = 265.96] [color={rgb, 255:red, 0; green, 0; blue, 0 }  ][line width=0.75]    (10.93,-3.29) .. controls (6.95,-1.4) and (3.31,-0.3) .. (0,0) .. controls (3.31,0.3) and (6.95,1.4) .. (10.93,3.29)   ;
\draw    (418.02,38.16) .. controls (432.85,92) and (421.67,194.67) .. (416.67,212.67) ;
\draw    (415.02,218.16) .. controls (409.67,233.67) and (402.67,237.67) .. (396.67,238.67) ;
\draw    (391.02,240.16) .. controls (375.67,244.67) and (370.67,250.67) .. (363.67,254.67) ;
\draw    (358.67,257.67) .. controls (344.95,268.45) and (335.07,276.35) .. (327.15,303.95) ;
\draw [shift={(326.67,305.67)}, rotate = 285.42] [color={rgb, 255:red, 0; green, 0; blue, 0 }  ][line width=0.75]    (10.93,-3.29) .. controls (6.95,-1.4) and (3.31,-0.3) .. (0,0) .. controls (3.31,0.3) and (6.95,1.4) .. (10.93,3.29)   ;

\draw (60,206) node [anchor=north west][inner sep=0.75pt]   [align=left] {$\displaystyle \alpha $};
\draw (315,202) node [anchor=north west][inner sep=0.75pt]   [align=left] {$\displaystyle \beta $};

\end{tikzpicture}

\end{figure}
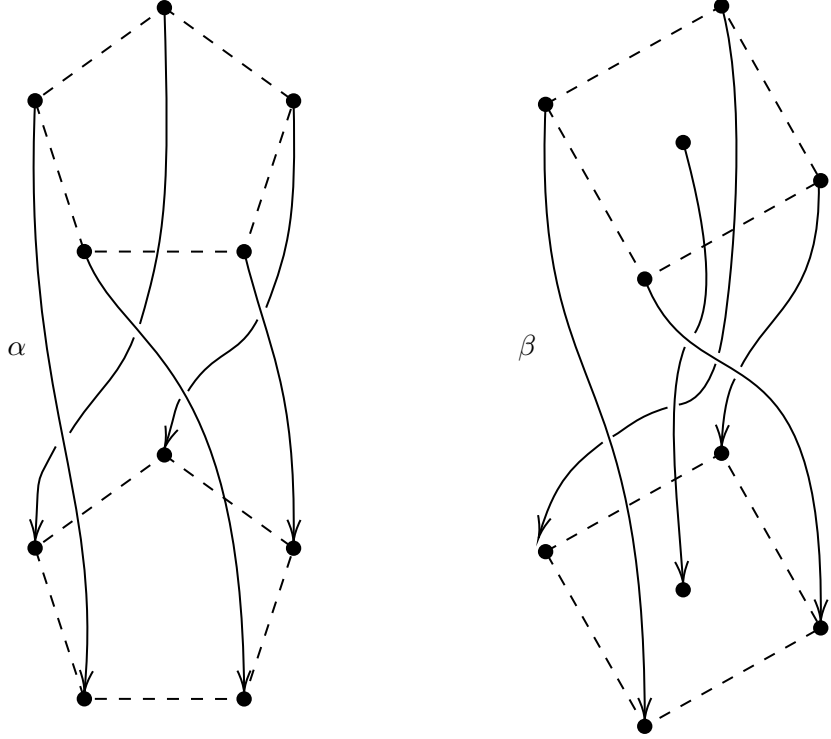

     \end{enumerate}
\end{proof}

\section{On the torsion of $H_*(\mathcal{M}_{0,n+1}/\Sigma_n;\Z)$} 
\label{sec:torsion of strict quotients}
It is well known that rationally $\mathcal{M}_{0,n+1}/\Sigma_n$ has the same (co)homology of a point, so every class in $H_*(\mathcal{M}_{0,n+1}/\Sigma_n;\mathbb{Z})$ is a torsion class. This fact can be deduced, for example, from the work of Getzler \cite{Getzler}: in \cite{Getzler} $H^*(\mathcal{M}_{0,n+1};\mathbb{Q})$ is computed as $\Sigma_{n+1}$-representation, so we can compute  $H^*(\mathcal{M}_{0,n+1}/\Sigma_n;\mathbb{Q})$  by taking the $\Sigma_n$-invariants. 

In this section we prove some results about the torsion of $H_*(\mathcal{M}_{0,n+1}/\Sigma_n;\mathbb{Z})$. In some cases (for small $n$) it is possible to fully compute the integral homology.
  
\subsection{An upper bound on the order of the elements}
We start by giving an upper bound on the torsion that may  appear in $H_*(\mathcal{M}_{0,n+1}/\Sigma_n;\mathbb{Z})$. 

\begin{thm}\label{thm: bound sulla torsione dei quozienti}
    Let $x\in H_*(\mathcal{M}_{0,n+1}/\Sigma_n;\Z)$. Then the order of $x$ divides $n!$.
\end{thm}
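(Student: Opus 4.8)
The plan is to use the transfer homomorphism associated to the $\Sigma_n$-action, combined with the fact that $\M_{0,n+1}/\Sigma_n$ is (rationally) trivial. Recall from Remark \ref{oss: quoziente stretto è omotopo al quoziente delle configurazioni non ordinate per il cerchio} that $\M_{0,n+1}/\Sigma_n \simeq C_n(\C)/S^1$, and that by Theorem \ref{thm:cactus sono deformation retract dello spazio di configurazioni} this is also homotopy equivalent to $\mathcal{C}_n/(S^1\times\Sigma_n)$, which arises as a quotient of $\mathcal{C}_n/S^1 \simeq \M_{0,n+1}$ by a $\Sigma_n$-action. The idea is that for a finite group $G$ of order $|G|$ acting on a space $Y$, the composite of the projection-induced map $H_*(Y;\Z)\to H_*(Y/G;\Z)$ with the transfer $H_*(Y/G;\Z)\to H_*(Y;\Z)$ is multiplication by $|G|$ on $H_*(Y;\Z)$, while the composite the other way (transfer followed by projection) is also multiplication by $|G|$ on $H_*(Y/G;\Z)$ — at least when the action is free, and more generally one must be careful. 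So I would first check that the relevant transfer argument applies here.

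First I would set $Y = \M_{0,n+1}$ (or a suitable $\Sigma_n$-CW model such as $\mathcal{C}_n/S^1$) with its $\Sigma_n$-action, so that $Y/\Sigma_n \simeq \M_{0,n+1}/\Sigma_n$. The key input is Theorem \ref{thm:omologia razionale del quoziente}: $H_*(\M_{0,n+1}/\Sigma_n;\Q)=0$ in positive degrees, so every class in $H_k(\M_{0,n+1}/\Sigma_n;\Z)$ for $k>0$ is torsion. Now for any such torsion class $x$, I want to show $n!\cdot x = 0$. The transfer map $\tau\colon H_*(Y/\Sigma_n;\Z)\to H_*(Y;\Z)$ exists for the action of the finite group $\Sigma_n$ (defined, e.g., at the level of singular chains by averaging over coset representatives, or via the equivariant-cohomology/Cartan–Leray machinery), and satisfies $p_*\circ\tau = \mathrm{mult}_{|\Sigma_n|} = \mathrm{mult}_{n!}$ on $H_*(Y/\Sigma_n;\Z)$, where $p\colon Y\to Y/\Sigma_n$ is the projection. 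Therefore $n!\cdot x = p_*(\tau(x))$. Since $\tau(x)\in H_k(Y;\Z) = H_k(\M_{0,n+1};\Z)$, and $\M_{0,n+1}\simeq F_n(\C)/\C\rtimes\C^* \simeq F_n(\C)/S^1$, one knows $H_*(\M_{0,n+1};\Z)$ is torsion-free (it is a well-known computation: $H_*(\M_{0,n+1};\Z)=sH_*(F_n(\C))/\ldots$, concretely the homology of $\M_{0,n+1}$ is free abelian, being a hyperplane-arrangement complement quotient). Hence $\tau(x)$ is an element of a free abelian group that is also torsion (since $p_*$ of it equals the torsion class $n!\cdot x$ up to the relation below)... wait — one must be slightly more careful: what we directly get is $n!\cdot x = p_*(\tau(x))$, and we separately want to argue $\tau(x)=0$. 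This follows because $\tau(x)$ lies in $H_k(\M_{0,n+1};\Z)$ which is torsion-free, and $\tau(x)$ is a torsion element: indeed $|\Sigma_n|\cdot\tau(x) = \tau(p_*\tau(x)) = \tau(n!\cdot x)$, hmm, that is circular. The cleaner route: $x$ is torsion, say $m\cdot x=0$; then $m\cdot\tau(x)=\tau(m\cdot x)=0$, so $\tau(x)$ is torsion in the torsion-free group $H_k(\M_{0,n+1};\Z)$, forcing $\tau(x)=0$, whence $n!\cdot x = p_*(\tau(x)) = p_*(0) = 0$.

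The main obstacle I anticipate is \emph{justifying that the transfer exists and satisfies $p_*\tau = \mathrm{mult}_{n!}$ even though the $\Sigma_n$-action on $\M_{0,n+1}$ is not free} (it has fixed strata corresponding to configurations with nontrivial symmetry, cf. the stabilizers discussed around Figure \ref{fig:alcune celle e i rispettivi stabilizzatori}). For a finite group acting on a reasonable space (e.g.\ a $\Sigma_n$-CW complex, which we have via the cactus model), the singular-chain transfer $\tau(\sigma) = \sum_{g\in\Sigma_n} g\cdot\tilde\sigma$ for a lift $\tilde\sigma$ of each singular simplex is well-defined on homology regardless of freeness, and the identity $p_*\tau = \mathrm{mult}_{|\Sigma_n|}$ holds unconditionally; I would either cite a standard reference (e.g.\ Bredon, \emph{Introduction to Compact Transformation Groups}, or Brown, \emph{Cohomology of Groups}, III.9–10) or spell out the chain-level computation. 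The only other point needing care is the torsion-freeness of $H_*(\M_{0,n+1};\Z)$; this is classical — $\M_{0,n+1}\cong F_n(\C)/(\C\rtimes\C^*)$ fibers appropriately over $\M_{0,n}$-type spaces, or one uses that it is homotopy equivalent to a complement of a linear subspace arrangement with torsion-free (indeed, explicitly known) integral homology — so I would just invoke it with a reference.
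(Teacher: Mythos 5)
Your argument is correct and is essentially the paper's proof: both use the transfer map $\tau$ for the $n!$-fold (ramified) quotient $p\colon\M_{0,n+1}\to\M_{0,n+1}/\Sigma_n$, the identity $p_*\tau=n!$, the torsion-freeness of $H_*(\M_{0,n+1};\Z)$, and the fact (from the rational computation) that $x$ is torsion, to conclude $\tau(x)=0$ and hence $n!\,x=0$. The paper handles the non-free-action issue you flag by citing the theory of ramified coverings (Smith) rather than Bredon/Brown, but this is the same point.
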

\begin{proof}
    The quotient map $p:\mathcal{M}_{0,n+1}\to\mathcal{M}_{0,n+1}/\Sigma_n$ is a $n!$ fold ramified covering in the sense of \cite{Smith}. Therefore we have a transfer map $\tau:H_*(\mathcal{M}_{0,n+1}/\Sigma_n;\Z)\to H_*(\mathcal{M}_{0,n+1};Z)$ such that the diagram 
    \[
    \begin{tikzcd}
        & H_*(\mathcal{M}_{0,n+1}/\Sigma_n;\Z) \arrow[rr,"\cdot n!"] \arrow[dr,"\tau"] &  & H_*(\mathcal{M}_{0,n+1}/\Sigma_n;\Z) \\
        &  & H_*(\mathcal{M}_{0,n+1};\Z) \arrow[ur,"p_*"] &
    \end{tikzcd}
    \]
    commutes. Given $x\in H_*(\mathcal{M}_{0,n+1}/\Sigma_n;\Z) $ we know that it is a torsion class; however $H_*(\mathcal{M}_{0,n+1};\Z)$ is torsion free, so $\tau$ is the zero map and we get the statement.
\end{proof}
In order to get more information on the torsion of $H_*(\mathcal{M}_{0,n+1}/\Sigma_n;\Z)$ we will follow this approach:
\begin{itemize}
    \item We will focus on the calculation of $H_*(\mathcal{M}_{0,n+1}/\Sigma_n;\F_p)$, with $p$ be any prime number.
    \item Note that $\M_{0,n+1}/\Sigma_n$ is homotopy equivalent to $C_n(\C)/S^1$, which is a bit easier to handle (Remark \ref{oss: quoziente stretto è omotopo al quoziente delle configurazioni non ordinate per il cerchio}).
    \item To compute $H_*(\mathcal{M}_{0,n+1}/\Sigma_n;\F_p)\cong H_*(C_n(\C)/S^1;\F_p)$ the main idea is that we can compare it to $H^{S^1}_*(C_n(\C);\F_p)$, which is known by the computation of \cite{Rossi1}.
\end{itemize}
	We will get a complete calculation of $H_*(\mathcal{M}_{0,n+1}/\Sigma_n;\F_p)$ when $n\neq 0,1$ mod $p$ and $n=p,p+1$. Moreover we will prove that $\mathcal{M}_{0,n+1}/\Sigma_n$ is contractible for any $n\leq 5$. Before going into the details of these computations let us discuss the relation between homotopy quotients and strict quotients.
\subsection{Homotopy quotients vs strict quotients} \label{subsec: homotopy quotients vs strict}
In this paragraph we discuss the relation between the homotopy quotient $X_{S^1}$ and the strict quotient $X/S^1$. Most of the results presented are easy consequences of the theory of transformation groups, which is  developed in \cite{BredonGroups} and \cite{TomDieck}.

\begin{prop}[\cite{BredonGroups}, p. 371]\label{prop: relazione tra quoziente omotopico e stretto}
    Let $X$ be a $S^1$-space, $A\subseteq X$ be a closed invariant subspace and $F$ be an abelian group of coefficients. Suppose that for any $x\in X-(A\cup X^{S^1})$ we have $H^i(BG_x;F)=0$ for all $i>0$. Then for any $i\in\N$ the map $f:X_{S^1}\to X/S^1$ induces an isomorphism
    \[
    f^*:H^i(X/S^1,A/S^1\cup X^{S^1};F)\to H^i_{S^1}(X,A\cup X^{S^1};F)
    \]
\end{prop}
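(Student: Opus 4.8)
The plan is to run the relative Leray spectral sequence of the continuous map $f\colon X_{S^1}\to X/S^1$, where $X_{S^1}=X\times_{S^1}ES^1$ and $f([x,e])=[x]$, and to show that, after passing to the pair in which the ``bad locus'' $B\coloneqq A\cup X^{S^1}$ has been killed, this spectral sequence degenerates onto its zeroth row.

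First I would identify the fibres of $f$. Since $B$ is invariant, $f^{-1}(B/S^1)=B_{S^1}$, and the preimage of a single orbit $[x]\in X/S^1$ is $(S^1\!\cdot x)\times_{S^1}ES^1\cong (S^1/G_x)\times_{S^1}ES^1\cong ES^1/G_x=BG_x$. Using a tube/slice around the orbit of $x$ (available because $S^1$ is compact acting on a suitably nice space) one sees that $f$ is, locally over $X/S^1$, modelled on a projection with fibre $BG_x$; consequently the higher direct image sheaf $\mathcal R^qf_*F$ on $X/S^1$ has stalk $(\mathcal R^qf_*F)_{[x]}\cong H^q(BG_x;F)$, while $\mathcal R^0f_*F$ is the constant sheaf $F$ (each fibre $BG_x$ is path-connected). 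This identification of the stalks of the derived direct images with fibrewise cohomology — carried out in Bredon's sheaf-cohomological framework, where paracompactness and the existence of (cohomologically trivial) slices are what make it work — is the step I expect to be the main obstacle.

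Granting that, the hypothesis says precisely that $H^q(BG_x;F)=0$ for all $q>0$ and all $x\in X\setminus B$, i.e. $\mathcal R^qf_*F$ vanishes on the open set $(X\setminus B)/S^1$, so $\operatorname{supp}(\mathcal R^qf_*F)\subseteq B/S^1$ for every $q>0$. I would then feed this into the relative Leray spectral sequence
\[
E_2^{p,q}=H^p\bigl(X/S^1,\ B/S^1;\ \mathcal R^qf_*F\bigr)\ \Longrightarrow\ H^{p+q}\bigl(X_{S^1},\ B_{S^1};\ F\bigr)=H^{p+q}_{S^1}(X,B;F),
\]
using the elementary fact that relative sheaf cohomology $H^p(Z,C;\mathcal G)$ vanishes whenever $\mathcal G$ is supported on the closed subspace $C$ (it is computed from the extension by zero of $\mathcal G|_{Z\setminus C}$, which is the zero sheaf). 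Hence $E_2^{p,q}=0$ for every $q>0$, the spectral sequence collapses on the line $q=0$, and the edge homomorphism gives isomorphisms $H^i(X/S^1,B/S^1;\mathcal R^0f_*F)\xrightarrow{\ \cong\ }H^i_{S^1}(X,B;F)$. Since $\mathcal R^0f_*F=F$ and the $q=0$ edge map of a Leray spectral sequence with constant coefficients is exactly the homomorphism induced by the map, and since $B/S^1=A/S^1\cup X^{S^1}$, this is the asserted isomorphism $f^*$.

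As a sanity check on the mechanism: over $U\coloneqq X\setminus B$ the restriction $f\colon U_{S^1}\to U/S^1$ has $F$-acyclic fibres, so it is a Vietoris--Begle map and induces $F$-cohomology isomorphisms for all invariant pairs inside $U$; the only thing standing between this observation and an immediate proof is the need to excise $B/S^1$, i.e. to know that $B/S^1\subseteq X/S^1$ and $B_{S^1}\subseteq X_{S^1}$ are tautly embedded — which is, once again, the point-set input one extracts from Bredon's standing hypotheses.
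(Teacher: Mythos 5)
The paper does not reproduce a proof of this proposition; it is cited directly from Bredon's \emph{Introduction to Compact Transformation Groups}, p.\ 371, so there is no in-paper argument to compare against. Your proof via the relative Leray spectral sequence of $f\colon X_{S^1}\to X/S^1$ — identifying the fibres with the classifying spaces $BG_x$, deducing from the acyclicity hypothesis that $\mathcal{R}^qf_*F$ is supported on the closed set $B/S^1$ with $B\coloneqq A\cup X^{S^1}$ for all $q>0$, and collapsing the relative spectral sequence onto the row $q=0$ so that the edge map $f^*$ is an isomorphism — is, as far as I can tell, essentially Bredon's own argument, and it is correct as a sketch. The two point-set issues you flag, namely that the stalks of $\mathcal{R}^qf_*F$ really agree with the fibrewise $H^q(BG_x;F)$ (which needs a slice/tube argument making $f$ locally a product over $X/S^1$) and that the relative Leray spectral sequence with the stated $E_2$-page exists and computes $H^*_{S^1}(X,B;F)$ (which needs paracompactness of $X/S^1$ and tautness of $B/S^1$, $B_{S^1}$), are exactly where Bredon's standing hypotheses on the transformation group enter, and you are right to treat them as the substantive technical input rather than details to be glossed over.
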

If we fix $\F_p$ as coefficients and $A=X^{\Z/p}$ we get:
\begin{corollario}\label{cor:relazione tra coomologia quoziente omotopico e quoziente stretto}
    Let $X$ be an $S^1$-space. Then for any $i\in\N$ the map $f:X_{S^1}\to X/S^1$ induces an isomorphism
     \[
    f^*:H^i(X/S^1,X^{\Z/p}/S^1;\F_p)\to H^i_{S^1}(X,X^{\Z/p};\F_p)
    \]
   
\end{corollario}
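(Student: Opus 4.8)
The plan is to obtain this as the special case of Proposition \ref{prop: relazione tra quoziente omotopico e stretto} in which one takes $F=\F_p$ and $A=X^{\Z/p}$. First I would check that $A=X^{\Z/p}$ is a closed invariant subspace of $X$, as required by the Proposition: it is $S^1$-invariant because the $\Z/p$-action and the ambient $S^1$-action both come from the (abelian) circle and hence commute, and it is closed because $X$ is Hausdorff, so the fixed-point set of the compact group $\Z/p$ is closed.

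Next I would verify the vanishing hypothesis of the Proposition, namely that $H^i(BG_x;\F_p)=0$ for all $i>0$ and every $x\in X-(A\cup X^{S^1})$. Since $x\notin X^{S^1}$, the isotropy group $G_x$ is a proper closed subgroup of $S^1$, hence a finite cyclic group $\Z/m$ for some $m\geq 1$ (or the trivial group, in which case $BG_x$ is a point and there is nothing to prove). Since $x\notin X^{\Z/p}$, the subgroup $\Z/p\leq S^1$ does not fix $x$, so $\Z/p$ is not contained in $G_x=\Z/m$; equivalently $p\nmid m$. It is classical that $H^*(B\Z/m;\Z)$ is $\Z$ in degree $0$ and is annihilated by $m$ in every positive degree, so the universal coefficient theorem gives $H^i(B\Z/m;\F_p)=0$ for $i>0$ whenever $p\nmid m$ (both the $-\otimes\F_p$ term and the $\operatorname{Tor}$ term vanish). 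This is the only place where an actual computation is needed, and it is short; I do not expect any genuine obstacle, since once the hypothesis of Proposition \ref{prop: relazione tra quoziente omotopico e stretto} is checked the corollary is essentially a translation of the cited result.

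Finally I would simplify the conclusion. Because $X^{S^1}\subseteq X^{\Z/p}$ (a point fixed by the whole circle is a fortiori fixed by the subgroup $\Z/p$), one has $A\cup X^{S^1}=X^{\Z/p}$ and, inside $X/S^1$, $A/S^1\cup X^{S^1}=X^{\Z/p}/S^1$. Substituting these identifications into the isomorphism furnished by Proposition \ref{prop: relazione tra quoziente omotopico e stretto} yields exactly
\[
f^*:H^i(X/S^1,X^{\Z/p}/S^1;\F_p)\xrightarrow{\ \cong\ }H^i_{S^1}(X,X^{\Z/p};\F_p),
\]
which is the assertion of the corollary.
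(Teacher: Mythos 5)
Your proposal is correct and follows exactly the paper's route: specialize Proposition \ref{prop: relazione tra quoziente omotopico e stretto} to $F=\F_p$ and $A=X^{\Z/p}$, noting that $X^{S^1}\subseteq X^{\Z/p}$ collapses $A\cup X^{S^1}$ to $X^{\Z/p}$. The paper states this in a single line; you have simply made explicit the verifications (closedness and $S^1$-invariance of $X^{\Z/p}$, and the vanishing $H^{>0}(B\Z/m;\F_p)=0$ when $p\nmid m$) that the authors leave to the reader.
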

\begin{oss}\label{oss: quoziente stretto e omotopico coincidono mod p}
    If there are not $\Z/p$-fixed points, then we have
    \[
    H^i(X/S^1;\F_p)\cong H^i_{S^1}(X;\F_p)
    \]
\end{oss}
We can use this Corollary to get a \emph{Mayer-Vietoris} sequence as follows: the map $f:X_{S^1}\to X/S^1$ sends $(X^{\Z/p})_{S^1}$ to $X^{\Z/p}/S^1$ therefore we get a map of long exact sequences 
\[
\begin{tikzcd}[column sep=small]
    &\cdots\arrow[r] & H^{i}(\frac{X}{S^1},\frac{X^{\Z/p}}{S^1})\arrow[r]\arrow[d,red]& H^i(\frac{X}{S^1})\arrow[r]\arrow[d] & H^i(\frac{X^{\Z/p}}{S^1})\arrow[r]\arrow[d]&H^{i+1}(\frac{X}{S^1},\frac{X^{\Z/p}}{S^1})\arrow[r]\arrow[d,red]&\cdots\\
     &\cdots\arrow[r] & H^{i}_{S^1}(X,X^{\Z/p})\arrow[r]& H^i_{S^1}(X)\arrow[r] & H^i_{S^1}(X^{\Z/p})\arrow[r]& H^{i}_{S^1}(X,X^{\Z/p})\arrow[r] &\cdots
\end{tikzcd}
\]
where the red vertical arrows are isomorphisms by the previous Corollary. Therefore we get:
\begin{prop}[Mayer-Vietoris]\label{prop: Mayer-Vietoris}
    Fix $p$ a prime and use $\F_p$ as field of coefficients for cohomology. Then we have a long exact sequence
    \[
    \begin{tikzcd}[column sep=small]
         &\cdots\arrow[r] & H^{i}(\frac{X}{S^1})\arrow[r]&  H^i(\frac{X^{\Z/p}}{S^1})\oplus H^i_{S^1}(X)\arrow[r]&H^{i}_{S^1}(X^{\Z/p})\arrow[r] & H^{i+1}(\frac{X}{S^1})\arrow[r]&\cdots
    \end{tikzcd}
    \]
\end{prop}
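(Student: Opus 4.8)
The plan is to obtain the sequence as a formal (Barratt--Whitehead-type) consequence of Corollary \ref{cor:relazione tra coomologia quoziente omotopico e quoziente stretto}, exactly along the lines of the ladder displayed just above the statement. First I would record that $X^{\Z/p}$ is an $S^1$-invariant subspace of $X$: since $S^1$ is abelian, for $g\in S^1$, $h\in\Z/p\subseteq S^1$ and $x\in X^{\Z/p}$ one has $h(gx)=g(hx)=gx$, so $gx\in X^{\Z/p}$. Hence $X^{\Z/p}/S^1$ and $(X^{\Z/p})_{S^1}$ are defined, and by functoriality of the Borel construction the projection $f\colon X_{S^1}\to X/S^1$ restricts to a map $(X^{\Z/p})_{S^1}\to X^{\Z/p}/S^1$; thus $f$ is a map of pairs $(X_{S^1},(X^{\Z/p})_{S^1})\to(X/S^1,X^{\Z/p}/S^1)$. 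Applying $H^*(-;\F_p)$ and using the definition $H^*_{S^1}(-)=H^*((-)_{S^1};\F_p)$ (also in the relative case), we obtain precisely the commuting ladder of long exact sequences of pairs shown before the statement, whose bottom row is the equivariant long exact sequence of the pair $(X,X^{\Z/p})$.

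Next I would invoke Corollary \ref{cor:relazione tra coomologia quoziente omotopico e quoziente stretto} to conclude that the vertical maps on the relative terms
\[
f^*\colon H^i(X/S^1,X^{\Z/p}/S^1;\F_p)\xrightarrow{\ \cong\ } H^i_{S^1}(X,X^{\Z/p};\F_p)
\]
are isomorphisms for every $i$ (the red arrows in the ladder). At this point the statement follows from the Barratt--Whitehead lemma: given a morphism of long exact sequences in which one of the three systems of terms maps isomorphically throughout, the remaining two systems of terms assemble into a new long exact sequence. Matching up the relative terms, the new sequence is exactly
\[
\cdots\to H^{i}(X/S^1)\to H^i(X^{\Z/p}/S^1)\oplus H^i_{S^1}(X)\to H^{i}_{S^1}(X^{\Z/p})\to H^{i+1}(X/S^1)\to\cdots,
\]
where the first map has components the restriction $H^i(X/S^1)\to H^i(X^{\Z/p}/S^1)$ (along $X^{\Z/p}/S^1\hookrightarrow X/S^1$) and the map $f^*\colon H^i(X/S^1)\to H^i_{S^1}(X)$; the second map is the difference of the map $H^i(X^{\Z/p}/S^1)\to H^i_{S^1}(X^{\Z/p})$ induced by $f$ and the restriction $H^i_{S^1}(X)\to H^i_{S^1}(X^{\Z/p})$; and the connecting homomorphism is the composite of the equivariant boundary $H^i_{S^1}(X^{\Z/p})\to H^{i+1}_{S^1}(X,X^{\Z/p})$ with $(f^*)^{-1}$ and then with $H^{i+1}(X/S^1,X^{\Z/p}/S^1)\to H^{i+1}(X/S^1)$. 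Exactness at each term is a short diagram chase using exactness of the two rows together with the bijectivity of the relative maps.

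I do not anticipate a genuine obstacle: once Corollary \ref{cor:relazione tra coomologia quoziente omotopico e quoziente stretto} is in hand the proposition is pure homological algebra. The only points worth checking carefully are that $X^{\Z/p}$ is $S^1$-invariant (so that every quotient and Borel construction appearing in the ladder is defined) and that the hypotheses of Proposition \ref{prop: relazione tra quoziente omotopico e stretto} are satisfied for $A=X^{\Z/p}$ --- namely that each $x\in X-(X^{\Z/p}\cup X^{S^1})$ has finite isotropy group $G_x$ of order coprime to $p$, so that $H^{i}(BG_x;\F_p)=0$ for $i>0$ --- but the latter is immediate and is in any case already absorbed into the cited corollary. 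Thus the whole argument reduces to: restrict $f$ over the $\Z/p$-fixed set, form the ladder of long exact sequences of pairs, identify the relative columns via the corollary, and feed the ladder to Barratt--Whitehead.
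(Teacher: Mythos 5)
Your proposal is correct and follows essentially the same route as the paper: both construct the map of long exact sequences of pairs induced by $f\colon X_{S^1}\to X/S^1$, use Corollary~\ref{cor:relazione tra coomologia quoziente omotopico e quoziente stretto} to see that the relative columns are isomorphisms, and splice the two rows into a Mayer--Vietoris sequence (the paper does not name Barratt--Whitehead, but that is exactly the lemma being used). The extra checks you include --- that $X^{\Z/p}$ is $S^1$-invariant since $S^1$ is abelian, and the explicit description of the maps --- are welcome elaborations but do not change the argument.
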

We are going to use Proposition \ref{prop: Mayer-Vietoris} for $X=C_n(\C)$ in order to do some computations of $H_*(C_n(\C)/S^1;\F_p)$. Before going on let us observe that the space of fixed points $C_n(\C)^{\Z/p}$ is well understood:
	\begin{lem}\label{lem: Z/p punti fissi}
		Let $n=pq$ or $n=pq+1$. Then the fixed points $C_n(\C)^{\Z/p}$ are homeomorphic to $C_q(\C^*)$.
	\end{lem}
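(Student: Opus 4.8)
The plan is to describe the $\Z/p$-fixed point set of $C_n(\C)$ explicitly, using the description of $C_n(\C)$ via the symmetric product / monic polynomials from Remark \ref{oss: trucco dei prodotti simmetrici}. A point of $C_n(\C)$ is an unordered $n$-tuple $\{z_1,\dots,z_n\}$ of distinct complex numbers, equivalently a monic squarefree polynomial of degree $n$. The group $\Z/p$ acts on $C_n(\C)$ as the subgroup of $S^1$ generated by $\zeta\coloneqq e^{2\pi i/p}$, i.e.\ $\zeta\cdot\{z_1,\dots,z_n\}=\{\zeta z_1,\dots,\zeta z_n\}$. So a configuration is fixed by $\Z/p$ precisely when the set $\{z_1,\dots,z_n\}$ is invariant under multiplication by $\zeta$.

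**The combinatorial description of the fixed set.** First I would analyze which $\zeta$-invariant squarefree sets of size $n$ can occur. Since $\zeta$ has order $p$ and acts freely on $\C^*$ (multiplication by a primitive $p$-th root of unity has no nonzero fixed point), every $\Z/p$-orbit inside $\C^*$ has exactly $p$ elements, while the single point $0\in\C$ is fixed. Hence a $\zeta$-invariant subset of $\C$ of cardinality $n$ consists of a disjoint union of free $\Z/p$-orbits in $\C^*$, possibly together with the origin. If $n=pq$ the origin cannot be included (that would force $n\equiv 1$ mod $p$), so the configuration is a union of exactly $q$ free orbits in $\C^*$; if $n=pq+1$ the origin must be included, and the remaining $pq$ points form $q$ free orbits in $\C^*$. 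In both cases the data of such a configuration is exactly an unordered set of $q$ distinct $\Z/p$-orbits in $\C^*$, and distinctness of the orbits is equivalent to the whole configuration being squarefree.

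**Identifying the quotient of orbits with $C_q(\C^*)$.** The next step is to make the identification "$q$ distinct $\Z/p$-orbits in $\C^*$" $\cong C_q(\C^*)$ precise. The orbit space $\C^*/(\Z/p)$ is itself homeomorphic to $\C^*$ via the $p$-th power map $w\mapsto w^p$ (this is the quotient map for the free $\Z/p$-action on $\C^*$, and it is a covering map whose base is $\C^*$). Therefore choosing $q$ distinct free $\Z/p$-orbits in $\C^*$ is the same as choosing $q$ distinct points of $\C^*/(\Z/p)\cong\C^*$, i.e.\ a point of $C_q(\C^*)$. I would then check that this bijection is a homeomorphism: it is induced by the continuous proper map $C_q(\C^*)\to C_n(\C)^{\Z/p}$ sending $\{w_1,\dots,w_q\}$ to the union of the fibers $\bigcup_i\{\,\zeta^j w_i'\,:\,0\le j<p\,\}$ (with $0$ adjoined when $n=pq+1$), where $w_i'$ is any $p$-th root of $w_i$; this is well-defined, continuous, bijective onto the fixed set, and a homeomorphism since both spaces are locally compact Hausdorff (or, concretely, by exhibiting the continuous inverse, which records for a $\zeta$-invariant configuration the set of $p$-th powers of its nonzero points).

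**Expected main obstacle.** The conceptual content is light; the only point requiring a little care is the topology — verifying that the evident set-theoretic bijection $C_q(\C^*)\xrightarrow{\sim}C_n(\C)^{\Z/p}$ is a homeomorphism rather than merely a continuous bijection. I would handle this either by a properness argument or, cleanly, by writing down the inverse map explicitly using the $p$-th power covering $\C^*\to\C^*$ and noting that taking "the image configuration under a covering map" and "the preimage configuration" are both continuous operations on configuration spaces. A minor bookkeeping subtlety is the case split $n=pq$ versus $n=pq+1$ (whether the origin is present), but in both cases the nonzero part of the configuration carries all the moduli and is governed by $C_q(\C^*)$, so the two cases give the same answer.
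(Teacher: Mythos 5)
Your proof is correct and follows the same strategy as the paper: a $\Z/p$-fixed configuration decomposes into free $\Z/p$-orbits in $\C^*$ (together with the origin when $n=pq+1$), and a choice of $q$ distinct orbits is the same as a point of $C_q(\C^*/(\Z/p))\cong C_q(\C^*)$. The one place where you diverge from the paper is in how the homeomorphism $\C^*/(\Z/p)\cong\C^*$ is realized. The paper constructs an explicit fundamental domain $H$ by taking the closed sector $\{z\in\C^*\mid \arg(z)\in[0,2\pi/p]\}$ and gluing its two boundary rays via $z\mapsto \zeta z$, then identifies $C_n(\C)^{\Z/p}$ with $C_q(H)$ by selecting the representative of each orbit lying in the half-open sector. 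You instead use the $p$-th power covering $\C^*\to\C^*$, $w\mapsto w^p$, to identify $\C^*/(\Z/p)$ with $\C^*$ directly. Both are valid; your version is arguably cleaner since it avoids introducing the auxiliary glued space $H$, and it makes the continuity of both the bijection $C_q(\C^*)\to C_n(\C)^{\Z/p}$ and its inverse (recording $p$-th powers of the nonzero points) immediate, whereas the sector model requires a little care around the boundary rays. You also handle the two cases $n=pq$ and $n=pq+1$ explicitly, while the paper only writes out the first case. No gaps; the argument is complete.
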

	\begin{proof}
		Let us prove the statement when $n=pq$, the other case in similar. Let us denote by $\zeta\coloneqq e^{i2\pi/p}$ the generator of $\Z/p$. Consider the quotient space
		\[
		H\coloneqq\{z\in\C^*\mid arg(z)\in[0,2\pi/p]\}/\sim
		\]
		where $\sim$ identifies a point $z\in\{z\in\C^*\mid arg(z)=0\}$ with $\zeta z$. So $H$ is homeomorphic to $\C^*$. Now observe that any configuration in $C_{n}(\C)^{\Z/p}$ is of the form $\{z_1,\zeta z_1,\dots\zeta^{p-1}z_1,\dots, z_q,\zeta z_q,\dots\zeta^{p-1}z_q\}$, where $z_1,\dots,z_q$ are distinct points in $\{z\in\C^*\mid arg(z)\in[0,2\pi/p)\}$. The associ ation $\{z_1,\zeta z_1,\dots\zeta^{p-1}z_1,\dots, z_q,\zeta z_q,\dots\zeta^{p-1}z_q\}\mapsto\{z_1,\dots,z_q\}$ defines a continuous map 
		\[
		f: C_{n}(\C)^{\Z/p}\to C_q(H)
		\]
		Conversely, if we have a configuration $\{z_1,\dots,z_q\}\in  C_q(H)$, we can produce a configuration of $C_n(\C)^{\Z/p}$ by taking the $\Z/p$-orbits of every point. More precisely, the association $\{z_1,\dots,z_q\}\mapsto\{z_1,\zeta z_1,\dots\zeta^{p-1}z_1,\dots, z_q,\zeta z_q,\dots\zeta^{p-1}z_q\}$ defines a continuous function $C_q(H)\to C_n(\C)^{\Z/p}$, which is the inverse of $f$. See Figure \ref{fig:homeomorfismo dello spazio dei punti fissi} for a pictorial description of $f$.
	\end{proof}
	\begin{figure}
		\centering

		\tikzset{every picture/.style={line width=0.75pt}} 
		
		\begin{tikzpicture}[x=0.75pt,y=0.75pt,yscale=-1,xscale=1]
			
			\draw    (140.15,163.71) -- (234.79,163.71) ;
			\draw    (140.15,163.71) -- (169.4,73.71) ;
			\draw  [fill={rgb, 255:red, 0; green, 0; blue, 0 }  ,fill opacity=1 ] (174.27,144) .. controls (174.27,143.25) and (174.88,142.64) .. (175.63,142.64) .. controls (176.39,142.64) and (177,143.25) .. (177,144) .. controls (177,144.75) and (176.39,145.36) .. (175.63,145.36) .. controls (174.88,145.36) and (174.27,144.75) .. (174.27,144) -- cycle ;
			\draw  [fill={rgb, 255:red, 0; green, 0; blue, 0 }  ,fill opacity=1 ] (194.01,148.23) .. controls (194.01,147.48) and (194.62,146.87) .. (195.37,146.87) .. controls (196.12,146.87) and (196.73,147.48) .. (196.73,148.23) .. controls (196.73,148.98) and (196.12,149.59) .. (195.37,149.59) .. controls (194.62,149.59) and (194.01,148.98) .. (194.01,148.23) -- cycle ;
			\draw  [fill={rgb, 255:red, 0; green, 0; blue, 0 }  ,fill opacity=1 ] (177.09,111.58) .. controls (177.09,110.83) and (177.7,110.22) .. (178.45,110.22) .. controls (179.2,110.22) and (179.81,110.83) .. (179.81,111.58) .. controls (179.81,112.33) and (179.2,112.94) .. (178.45,112.94) .. controls (177.7,112.94) and (177.09,112.33) .. (177.09,111.58) -- cycle ;
			\draw    (139.99,163.6) -- (169.03,253.68) ;
			\draw  [fill={rgb, 255:red, 0; green, 0; blue, 0 }  ,fill opacity=1 ] (169.22,190.03) .. controls (169.93,189.8) and (170.7,190.19) .. (170.93,190.91) .. controls (171.16,191.62) and (170.77,192.39) .. (170.05,192.62) .. controls (169.34,192.85) and (168.57,192.46) .. (168.34,191.74) .. controls (168.11,191.03) and (168.5,190.26) .. (169.22,190.03) -- cycle ;
			\draw  [fill={rgb, 255:red, 0; green, 0; blue, 0 }  ,fill opacity=1 ] (171.25,210.11) .. controls (171.97,209.88) and (172.73,210.27) .. (172.96,210.99) .. controls (173.19,211.7) and (172.8,212.47) .. (172.09,212.7) .. controls (171.37,212.93) and (170.6,212.54) .. (170.37,211.82) .. controls (170.14,211.11) and (170.53,210.34) .. (171.25,210.11) -- cycle ;
			\draw  [fill={rgb, 255:red, 0; green, 0; blue, 0 }  ,fill opacity=1 ] (200.94,182.76) .. controls (201.66,182.53) and (202.42,182.92) .. (202.66,183.64) .. controls (202.89,184.36) and (202.49,185.12) .. (201.78,185.35) .. controls (201.06,185.59) and (200.29,185.19) .. (200.06,184.48) .. controls (199.83,183.76) and (200.23,182.99) .. (200.94,182.76) -- cycle ;
			\draw    (140.11,163.96) -- (63,218.83) ;
			\draw  [fill={rgb, 255:red, 0; green, 0; blue, 0 }  ,fill opacity=1 ] (123.74,199.8) .. controls (124.18,200.41) and (124.03,201.26) .. (123.42,201.7) .. controls (122.81,202.13) and (121.96,201.99) .. (121.52,201.38) .. controls (121.09,200.77) and (121.23,199.92) .. (121.84,199.48) .. controls (122.45,199.04) and (123.3,199.19) .. (123.74,199.8) -- cycle ;
			\draw  [fill={rgb, 255:red, 0; green, 0; blue, 0 }  ,fill opacity=1 ] (105.21,207.8) .. controls (105.65,208.41) and (105.5,209.26) .. (104.89,209.7) .. controls (104.28,210.13) and (103.43,209.99) .. (102.99,209.38) .. controls (102.55,208.76) and (102.7,207.91) .. (103.31,207.48) .. controls (103.92,207.04) and (104.77,207.19) .. (105.21,207.8) -- cycle ;
			\draw  [fill={rgb, 255:red, 0; green, 0; blue, 0 }  ,fill opacity=1 ] (140.24,227.85) .. controls (140.68,228.46) and (140.54,229.31) .. (139.92,229.75) .. controls (139.31,230.18) and (138.46,230.04) .. (138.02,229.43) .. controls (137.59,228.82) and (137.73,227.97) .. (138.34,227.53) .. controls (138.96,227.09) and (139.81,227.24) .. (140.24,227.85) -- cycle ;
			\draw    (139.99,163.6) -- (63.42,107.98) ;
			\draw  [fill={rgb, 255:red, 0; green, 0; blue, 0 }  ,fill opacity=1 ] (131.62,125.59) .. controls (130.91,125.36) and (130.51,124.59) .. (130.75,123.88) .. controls (130.98,123.16) and (131.75,122.77) .. (132.46,123) .. controls (133.18,123.24) and (133.57,124) .. (133.34,124.72) .. controls (133.1,125.43) and (132.34,125.83) .. (131.62,125.59) -- cycle ;
			\draw  [fill={rgb, 255:red, 0; green, 0; blue, 0 }  ,fill opacity=1 ] (141.74,108.13) .. controls (141.03,107.9) and (140.63,107.13) .. (140.87,106.42) .. controls (141.1,105.7) and (141.87,105.31) .. (142.58,105.54) .. controls (143.3,105.77) and (143.69,106.54) .. (143.46,107.26) .. controls (143.22,107.97) and (142.46,108.36) .. (141.74,108.13) -- cycle ;
			\draw  [fill={rgb, 255:red, 0; green, 0; blue, 0 }  ,fill opacity=1 ] (101.66,112.89) .. controls (100.94,112.66) and (100.55,111.89) .. (100.78,111.18) .. controls (101.01,110.46) and (101.78,110.07) .. (102.5,110.3) .. controls (103.21,110.54) and (103.6,111.3) .. (103.37,112.02) .. controls (103.14,112.73) and (102.37,113.13) .. (101.66,112.89) -- cycle ;
			\draw  [fill={rgb, 255:red, 0; green, 0; blue, 0 }  ,fill opacity=1 ] (100.82,159.88) .. controls (100.37,160.48) and (99.52,160.61) .. (98.91,160.17) .. controls (98.31,159.73) and (98.17,158.88) .. (98.62,158.27) .. controls (99.06,157.66) and (99.91,157.53) .. (100.52,157.97) .. controls (101.13,158.42) and (101.26,159.27) .. (100.82,159.88) -- cycle ;
			\draw  [fill={rgb, 255:red, 0; green, 0; blue, 0 }  ,fill opacity=1 ] (87.38,144.81) .. controls (86.94,145.42) and (86.09,145.55) .. (85.48,145.11) .. controls (84.87,144.67) and (84.74,143.81) .. (85.18,143.21) .. controls (85.63,142.6) and (86.48,142.47) .. (87.09,142.91) .. controls (87.69,143.35) and (87.83,144.21) .. (87.38,144.81) -- cycle ;
			\draw  [fill={rgb, 255:red, 0; green, 0; blue, 0 }  ,fill opacity=1 ] (79.41,184.38) .. controls (78.96,184.99) and (78.11,185.12) .. (77.5,184.68) .. controls (76.9,184.24) and (76.76,183.38) .. (77.21,182.78) .. controls (77.65,182.17) and (78.5,182.04) .. (79.11,182.48) .. controls (79.72,182.93) and (79.85,183.78) .. (79.41,184.38) -- cycle ;
			\draw    (264,166) -- (330.73,166) ;
			\draw [shift={(332.73,166)}, rotate = 180] [color={rgb, 255:red, 0; green, 0; blue, 0 }  ][line width=0.75]    (10.93,-3.29) .. controls (6.95,-1.4) and (3.31,-0.3) .. (0,0) .. controls (3.31,0.3) and (6.95,1.4) .. (10.93,3.29)   ;
			\draw  [dash pattern={on 0.84pt off 2.51pt}] (39.73,102.88) .. controls (39.73,80.68) and (57.73,62.68) .. (79.93,62.68) -- (210.53,62.68) .. controls (232.73,62.68) and (250.73,80.68) .. (250.73,102.88) -- (250.73,223.48) .. controls (250.73,245.68) and (232.73,263.68) .. (210.53,263.68) -- (79.93,263.68) .. controls (57.73,263.68) and (39.73,245.68) .. (39.73,223.48) -- cycle ;
			\draw    (360.46,206.41) -- (494.73,206.41) ;
			\draw    (360.46,206.41) -- (401.95,78.71) ;
			\draw  [fill={rgb, 255:red, 0; green, 0; blue, 0 }  ,fill opacity=1 ] (408.87,178.44) .. controls (408.87,177.37) and (409.73,176.51) .. (410.8,176.51) .. controls (411.87,176.51) and (412.73,177.37) .. (412.73,178.44) .. controls (412.73,179.5) and (411.87,180.37) .. (410.8,180.37) .. controls (409.73,180.37) and (408.87,179.5) .. (408.87,178.44) -- cycle ;
			\draw  [fill={rgb, 255:red, 0; green, 0; blue, 0 }  ,fill opacity=1 ] (436.87,184.44) .. controls (436.87,183.37) and (437.73,182.51) .. (438.8,182.51) .. controls (439.87,182.51) and (440.73,183.37) .. (440.73,184.44) .. controls (440.73,185.5) and (439.87,186.37) .. (438.8,186.37) .. controls (437.73,186.37) and (436.87,185.5) .. (436.87,184.44) -- cycle ;
			\draw  [fill={rgb, 255:red, 0; green, 0; blue, 0 }  ,fill opacity=1 ] (412.87,132.44) .. controls (412.87,131.37) and (413.73,130.51) .. (414.8,130.51) .. controls (415.87,130.51) and (416.73,131.37) .. (416.73,132.44) .. controls (416.73,133.5) and (415.87,134.37) .. (414.8,134.37) .. controls (413.73,134.37) and (412.87,133.5) .. (412.87,132.44) -- cycle ;
			\draw  [fill={rgb, 255:red, 255; green, 255; blue, 255 }  ,fill opacity=1 ] (357.37,206.41) .. controls (357.37,204.7) and (358.76,203.32) .. (360.46,203.32) .. controls (362.17,203.32) and (363.55,204.7) .. (363.55,206.41) .. controls (363.55,208.11) and (362.17,209.49) .. (360.46,209.49) .. controls (358.76,209.49) and (357.37,208.11) .. (357.37,206.41) -- cycle ;
			\draw    (494.73,206.41) -- (429.6,206.41) ;
			\draw [shift={(427.6,206.41)}, rotate = 360] [color={rgb, 255:red, 0; green, 0; blue, 0 }  ][line width=0.75]    (10.93,-3.29) .. controls (6.95,-1.4) and (3.31,-0.3) .. (0,0) .. controls (3.31,0.3) and (6.95,1.4) .. (10.93,3.29)   ;
			\draw    (401.95,78.71) -- (381.83,140.65) ;
			\draw [shift={(381.21,142.56)}, rotate = 288] [color={rgb, 255:red, 0; green, 0; blue, 0 }  ][line width=0.75]    (10.93,-3.29) .. controls (6.95,-1.4) and (3.31,-0.3) .. (0,0) .. controls (3.31,0.3) and (6.95,1.4) .. (10.93,3.29)   ;
			
			\draw (292,143) node [anchor=north west][inner sep=0.75pt]   [align=left] {$\displaystyle f$};

		\end{tikzpicture}
		
		\caption{This picture shows how the homeomorphism $f:C_{15}(\C)^{\Z/5}\to C_3(\C^*)$ works. }
		\label{fig:homeomorfismo dello spazio dei punti fissi}
	\end{figure}
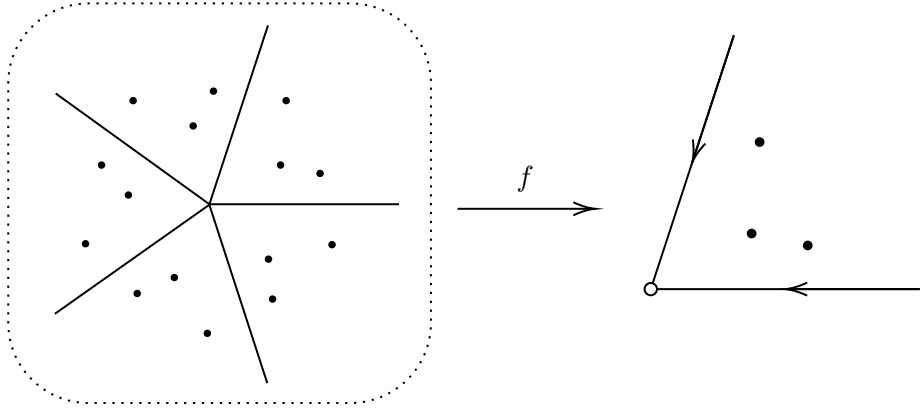
\subsection{Recollection on $H_*^{S^1}(C_n(\C);\F_p)$ }
In this paragraph we recall the computation of $H_*^{S^1}(C_n(\C);\F_p)$ for any $n\in\N$ and any prime $p$ (see \cite{Rossi1} for details).
Combining this with the Mayer-Vietoris sequence of Proposition \ref{prop: Mayer-Vietoris} we will be able to compute $H_*(C_n(\C)/S^1;\F_p)\cong H_*(\mathcal{M}_{0,n+1}/\Sigma_n;\F_p)$ is some cases. 

$H_*^{S^1}(C_n(\C);\F_p)$ can be computed using the Serre spectral sequence associated to the fibration
\begin{equation}\label{fib: fibrazione quoziente omotopico spazio configurazioni sul piano}
	C_n(\C)\hookrightarrow C_n(\C)_{S^1}\to BS^1
\end{equation}
The homology of the fiber is known, thanks to the work of F. Cohen (see \cite{Cohen}):
	\begin{thm}[\cite{Cohen}]\label{thm: omologia spazio di configurazione }
		Consider the disjoint union $C(\C)\coloneqq \bigsqcup_{n\in\N}C_n(\C)$ and let $p$ be any prime. Then $H_*(C(\C);\F_p)$ has the following form:
		\begin{description}
			\item[$p=2$:] $H_*(C(\C);\F_2)$ is the free graded commutative algebra on classes $\{Q^i(a)\}_{i\in\N}$, where $Q^i(a)$ is a class of degree $2^{i}-1$ in $H_*(C_{2^{i}}(\C);\F_2)$.
			\item[$p\neq 2$:] $H_*(C(\C);\F_p)$ is the free graded commutative algebra on classes 
   \[
   \{a,[a,a], Q^i[a,a], \beta Q^i[a,a]\}_{i\geq 1}
   \]
   where $a$ is the generator of $H_0(C_1(\C);\F_p)$, $[a,a]$ is the generator of $H_1(C_2(\C);\F_p)$  and $Q^i[a,a]$ (resp. $\beta Q^i[a,a]$) is a class of degree $2p^{i}-1$ (resp. $2p^i-2$) in $H_*(C_{2p^{i}}(\C);\F_p)$.
		\end{description}
	\end{thm}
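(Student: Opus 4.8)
This is a classical theorem of F.~Cohen, so rather than reproving it in full I outline the operadic route by which it is established. The plan is to recognise $C(\C)=\bigsqcup_{n\in\N}C_n(\C)$, with Pontryagin product given by juxtaposing configurations (after rescaling them into disjoint round disks), as a model for the \emph{free algebra over the little $2$-disks operad $\mathcal{D}_2$ on a single point}, and then to feed this into the Cohen--Lada--May description of the $\F_p$-homology of free $E_2$-algebras. The first step is the standard homotopy equivalence: $\mathcal{D}_2(n)$ is $\Sigma_n$-equivariantly equivalent to $F_n(\C)$, on which $\Sigma_n$ acts freely, so $\mathcal{D}_2(n)/\Sigma_n\simeq F_n(\C)/\Sigma_n=C_n(\C)$ and therefore $C(\C)\simeq\coprod_n\mathcal{D}_2(n)\times_{\Sigma_n}(\ast)^n$, the free $\mathcal{D}_2$-algebra on $\ast$. (Equivalently, group-completing $C(\C)$ one recovers $\Omega^2S^2$ and the statement is the ``monoid part'' of Cohen's computation of $H_*(\Omega^2S^2;\F_p)$.)

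The second step is the structure theory of such free algebras: $H_*(-;\F_p)$ of a free $\mathcal{D}_2$-algebra on a graded module $V$ is the free graded object under exactly the operations carried by an $E_2$-structure --- the graded commutative Pontryagin product, the Browder bracket $\lambda_1$ of degree $1$, and the single top Dyer--Lashof operation $\xi_1$ together with its Bockstein $\beta\xi_1$; there are no further Dyer--Lashof operations because $\mathcal{D}_2(k)\simeq F_k(\C)$ contributes, $\Z/k$-equivariantly and in the relevant range, only one new class in each of two consecutive degrees. Both $\lambda_1$ and $\xi_1$ are read off from $H^{\Z/p}_*(F_p(\C);\F_p)$, and on classes one has $\xi_1\colon(\text{degree }q,\ \text{arity }k)\mapsto(pq+p-1,\ pk)$. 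Specialising $V=\F_p\cdot a$ to the generator $a\in H_0(C_1(\C);\F_p)$, the product gives the polynomial variables, the bracket gives $[a,a]\in H_1(C_2(\C);\F_p)$, and iterating $\xi_1$ gives the families $Q^i[a,a],\ \beta Q^i[a,a]$ of degrees $2p^i-1,\ 2p^i-2$ in arity $2p^i$. For $p=2$ the identity $\xi_1(x)=\lambda_1(x,x)$ (compare degrees: both are $2q+1$) collapses $[a,a]$ into $\xi_1(a)=Q^1a$, so the list of generators begins one step earlier, giving the classes $Q^i(a)$ of degree $2^i-1$ in arity $2^i$. The instability relations ($\xi_1(x)=x^p$ in the critical degree, $\lambda_1(x,x)=0$ for $p$ odd and $x$ of even degree, graded commutativity, and the Jacobi and Leibniz identities) then cut the free $E_2$-object down to precisely the free graded commutative algebra on the asserted generators.

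The genuine content --- and the part Cohen actually has to work for --- is the \emph{freeness}: that the tautological map from the free graded commutative algebra on the listed classes to $H_*(C(\C);\F_p)$ is an isomorphism, not merely a surjection. I would prove this by a Poincar\'e-series comparison, arity by arity, against the classical additive computation of $H_*(C_n(\C);\F_p)$ coming from the Fox--Neuwirth and Fuks cell stratification of $C_n(\C)$ (equivalently, from $H_*$ of the symmetric groups with the sign-twisted local system dictated by the orientations of the strata): one checks that, in each arity $n$ and each degree, the number of admissible monomials in the $Q^i$ and $\beta Q^i$ of the correct total weight equals $\dim_{\F_p}H_*(C_n(\C);\F_p)$, organising the bookkeeping by induction on $n$ and peeling off the top operation. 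Alternatively one can run the spectral sequence that computes the homology of the free $E_2$-algebra (the monadic bar spectral sequence, or the one associated with the stratification) and observe that it degenerates for dimension reasons. Either way the only laborious point is this combinatorial matching of monomials against cell counts; everything else is formal. I expect this matching to be the main obstacle.
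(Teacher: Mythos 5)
The paper does not actually prove this theorem: it is stated as a citation to F.~Cohen's work in \emph{The homology of iterated loop spaces}, and the paper itself gives no argument. So there is nothing in the paper to compare your sketch against except the pointer to the original reference. That said, your outline is a faithful summary of the standard operadic route by which Cohen establishes the result: identify $C(\C)=\bigsqcup_n C_n(\C)$ as the free $\mathcal{D}_2$-algebra on a point, invoke the structure theory of free $E_2$-algebras in $\F_p$-homology (Browder bracket, the single ``top'' Dyer--Lashof operation $\xi_1$ and its Bockstein), and specialise to a single degree-zero generator, noting the $p=2$ collapse $\xi_1(a)=[a,a]$. Your degree-and-arity bookkeeping ($\xi_1$ sends degree $q$, arity $k$ to $pq+p-1$, $pk$) matches the theorem's assertions $\deg Q^i[a,a]=2p^i-1$ in arity $2p^i$, and $\deg Q^i(a)=2^i-1$ in arity $2^i$ for $p=2$.

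Two minor remarks. First, your phrasing of why only $\lambda_1,\xi_1$ appear (``$\mathcal{D}_2(k)\simeq F_k(\C)$ contributes only one new class in each of two consecutive degrees'') is the right intuition but is stated loosely; the precise input is that $F_p(\C)$ is $(2p{-}1)$-dimensional and free as a $\Sigma_p$-space, so the twisted homology groups parameterising Dyer--Lashof operations vanish outside the top two degrees. Second, your proposed strategy for freeness (Poincar\'e-series matching against the Fuks/Fox--Neuwirth stratification) is a legitimate consistency check but is not how Cohen argues; his proof is constructive, building the operations from an approximation to $\Omega^2\Sigma^2$ and establishing freeness via a filtration of the free $\mathcal{D}_2$-algebra rather than by comparing dimensions after the fact. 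For the purposes of the paper, which simply \emph{uses} Cohen's theorem, citing it (as the authors do) is the appropriate move; reproving it, or even sketching the proof, is not required.
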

The main results of \cite{Rossi1} are the following:
\begin{thm}\label{thm: calcolo dell'omologia equivariante degli spazi di configurazione quando p divide n}
		Let $p$ be a prime, $n\in\N$ such that $n=0,1$ mod $p$. Then the (co)homological spectral sequence (with $\F_p$-coefficients) associated to fibration \ref{fib: fibrazione quoziente omotopico spazio configurazioni sul piano}
  degenerates at the second page. In particular
		\[
		H_*^{S^1}(C_n(\C);\F_p)\cong H_*(C_n(\C);\F_p)\otimes H_*(BS^1;\F_p)
		\]
	\end{thm}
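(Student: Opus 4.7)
The plan is to show that every differential $d^r$ in the spectral sequence vanishes for $r \geq 2$, so the sequence degenerates at $E^2$. By Proposition \ref{prop: differenziale dato da delta}, the $E^2$-differential is induced by the BV operator $\Delta$, so the first step is to establish that $\Delta \equiv 0$ on $H_*(C_n(\C);\F_p)$ under the hypothesis $n \equiv 0, 1 \pmod p$. For this I would combine the explicit basis of $H_*(C_n(\C);\F_p)$ from Theorem \ref{thm: omologia spazio di configurazione } with the formulas in (\ref{eq: formule di delta sulla base dei monomi}).

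The key observation is a weight argument. For odd $p$, every basis monomial of arity $n$ has the form $a^k [a,a]^\epsilon m$ with $\epsilon \in \{0,1\}$ and $m$ a product of generators $Q^i[a,a]$ and $\beta Q^i[a,a]$ of arity $2p^i$ (hence divisible by $p$) for $i \geq 1$. Reducing the arity equation modulo $p$ yields $k + 2\epsilon \equiv n \pmod p$. If $\epsilon = 1$, then $\Delta(a^k[a,a]m) = 0$ directly; if $\epsilon = 0$, then the assumption $n \equiv 0, 1 \pmod p$ forces $k \equiv 0$ or $1 \pmod p$, so $k(k-1) \equiv 0 \pmod p$ and $\Delta(a^k m) = k(k-1)\,a^{k-2}[a,a]m$ vanishes modulo $p$. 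The case $p = 2$ is cleaner still since $k(k-1)$ is always even, and an analogous analysis on the basis of $\{Q^i(a)\}$ handles it. This establishes $d^2 = 0$.

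For the higher differentials the odd-indexed $d^r$ ($r \geq 3$) vanish automatically, because $H_*(BS^1;\F_p)$ is concentrated in even degrees: at every bidegree either the source or target of $d^r$ is zero. The substantive task is to rule out the even differentials $d^{2k}$ with $k \geq 2$. I would work cohomologically, where the Serre spectral sequence is a spectral sequence of $\F_p[y]$-algebras with $y \in H^2(BS^1;\F_p)$ a permanent cycle and the differentials are $\F_p[y]$-linear derivations; the vanishing of all higher $d^{2k}$ reduces to verifying that every algebra generator of $H^*(C_n(\C);\F_p)$ survives to $E_\infty$, i.e. admits an equivariant lift to $H^*_{S^1}(C_n(\C);\F_p)$. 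The main obstacle is precisely this equivariant formality step: while $d^2 = 0$ kills the primary obstruction, the secondary obstructions encoded by $d^4, d^6, \ldots$ require separate input. A natural strategy is to construct explicit equivariant lifts of the generating classes $a$, $[a,a]$, $Q^i[a,a]$ and $\beta Q^i[a,a]$ by representing them via $S^1$-invariant cycles built from symmetric configurations in $C_n(\C)$, after which $\F_p[y]$-linearity propagates the vanishing across the whole page. The same arithmetic identity $k(k-1)\equiv 0\pmod p$ that makes $\Delta$ vanish should reappear in these higher obstructions, since they are built from iterated $\Delta$-like operations on the same monomial basis.
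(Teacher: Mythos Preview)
Your $d^2=0$ argument is correct for odd $p$ and uses exactly the framework the paper sets up (the formulas in (\ref{eq: formule di delta sulla base dei monomi}) plus the weight congruence $k\equiv n\pmod p$ forcing $k(k-1)\equiv 0$). Note, though, that the paper itself does not prove this theorem: it is quoted from \cite{Rossi1} in a ``Recollection'' subsection, so there is no in-text argument against which to compare your handling of the higher differentials.

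That handling is where the genuine gap lies. You rightly acknowledge that $\Delta=0$ alone does not force degeneration (the Hopf action on $S^3$ already has $\Delta=0$ for degree reasons yet $d^4\neq 0$), but your proposed remedy is not well-posed. The classes $a,[a,a],Q^i[a,a],\beta Q^i[a,a]$ you propose to lift are \emph{homology} classes living in arities $1,2,2p^i$, not cohomology algebra generators of $H^*(C_n(\C);\F_p)$ for the fixed $n$ under discussion; multiplicativity of the cohomological spectral sequence only reduces the problem to the latter. Even if each building block lifted equivariantly in its own arity, the juxtaposition map $C_m(\C)\times C_k(\C)\to C_{m+k}(\C)$ is not $S^1$-equivariant, so such lifts would not assemble into lifts in arity $n$; and the closing remark that ``the same arithmetic identity \ldots should reappear'' is a hope rather than an argument. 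A route that does work, and toward which the paper's Lemma~\ref{lem:mono in coomologia equivariante} and Remark~\ref{oss: sequenza spettrale dei punti fissi degenera} already point, is a rank comparison via the localization theorem: for a finite-type $S^1$-space $X$ the Borel spectral sequence over $\F_p$ degenerates at $E_2$ if and only if $\dim_{\F_p}H^*(X;\F_p)=\dim_{\F_p}H^*(X^{\Z/p};\F_p)$, and since $C_n(\C)^{\Z/p}\cong C_q(\C^*)$ by Lemma~\ref{lem: Z/p punti fissi} this becomes a combinatorial identity between Cohen bases that can be checked directly.
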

\begin{thm}\label{thm:calcolo omologia equivariante quando p non divide n o n-1}
		Let $p$ be an odd prime, $n\in\N$ such that $n\neq 0,1$ mod $p$. The homological spectral sequence (with $\F_p$-coefficients) associated to fibration  \ref{fib: fibrazione quoziente omotopico spazio configurazioni sul piano}
  degenerates at the third page. In particular
		\[
		H_*^{S^1}(C_n(\C);\F_p)\cong coker(\Delta)
		\]
		where $\Delta:H_*(C_n(\C);\F_p)\to H_{*+1}(C_n(\C);\F_p)$ is the $BV$-operator.
	\end{thm}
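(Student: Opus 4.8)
The plan is to run the homological Serre spectral sequence of the fibration $C_n(\C)\hookrightarrow C_n(\C)_{S^1}\to BS^1$ directly, taking advantage of the fact that both the fiber homology and the only nontrivial differential are completely explicit. Since $BS^1\simeq \C P^\infty$ is simply connected and $H_*(C_n(\C);\F_p)$ has finite type, the Künneth theorem gives $E^2_{s,t}\cong H_s(BS^1;\F_p)\otimes_{\F_p}H_t(C_n(\C);\F_p)$, so $E^2$ is supported on the even columns $s=2i$, $i\geq 0$, and equals $H_*(C_n(\C);\F_p)\otimes\F_p[y]$ with $|y|=2$. By Proposition \ref{prop: differenziale dato da delta} the differential $d^2\colon E^2_{2i,t}\to E^2_{2i-2,t+1}$ sends $x\otimes y^i$ to $\Delta(x)\otimes y^{i-1}$ for $i\geq 1$ and vanishes on the column $s=0$. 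Equation \ref{eq: formule di delta sulla base dei monomi} shows that $\Delta$ moves the $a$-power of a monomial into its $[a,a]$-part, where it is killed; in particular $\Delta^2=0$, so $(H_*(C_n(\C);\F_p),\Delta)$ is a chain complex and $d^2$ is just $\Delta$ lowering the $BS^1$-degree by two. Passing to $E^3$, the first column becomes $E^3_{0,t}\cong\mathrm{coker}(\Delta)_t$, while each column $s=2i$ with $i\geq1$ becomes the $\Delta$-homology $\ker(\Delta)_t/\mathrm{im}(\Delta)_t$ of $H_*(C_n(\C);\F_p)$.

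The heart of the argument is to show that this $\Delta$-homology vanishes in arity $n$ whenever $n\not\equiv 0,1\pmod p$. I would use Cohen's description (Theorem \ref{thm: omologia spazio di configurazione }, odd-$p$ case) to split the complex $(H_*(C(\C);\F_p),\Delta)$ as a direct sum, indexed by the monomials $z$ in the letters $\{Q^i[a,a],\beta Q^i[a,a]\}_{i\geq 1}$, of the two-step subcomplexes spanned by the classes $a^kz$ and $a^k[a,a]z$, with $\Delta(a^kz)=k(k-1)\,a^{k-2}[a,a]z$ and $\Delta(a^k[a,a]z)=0$. The arithmetic point is that every such $z$ has arity a sum of terms $2p^i$ with $i\geq1$, hence arity $m\equiv0\pmod p$. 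In arity $n$ the corresponding summand is zero if $n<m$; otherwise $n\not\equiv0\equiv m$ and $n\not\equiv1\pmod p$ force $n\geq m+2$, so the summand is exactly $\F_p\langle a^{n-m}z\rangle\xrightarrow{(n-m)(n-m-1)}\F_p\langle a^{n-m-2}[a,a]z\rangle$, in which the scalar $(n-m)(n-m-1)$ is a unit of $\F_p$ precisely because $n$ is congruent to neither $m$ nor $m+1$ modulo $p$. Hence each summand is acyclic in arity $n$, so $H_*(H_*(C_n(\C);\F_p),\Delta)=0$.

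It follows that $E^3$ is concentrated in the single column $s=0$, where it is $\mathrm{coker}(\Delta)$; for bidegree reasons all differentials $d^r$ with $r\geq3$ then vanish, so $E^3=E^\infty$. Moreover the degeneration genuinely happens at $E^3$ and not at $E^2$, since under the standing hypothesis $\Delta(a^n)=n(n-1)\,a^{n-2}[a,a]\neq0$, so $d^2\neq0$. Because in each total degree exactly one bidegree of $E^\infty$ is nonzero, there is no extension problem, and we conclude $H_*^{S^1}(C_n(\C);\F_p)\cong\mathrm{coker}(\Delta)$.

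The step I expect to be the main obstacle is the $\Delta$-homology computation of the second paragraph: it is the only place where the hypothesis $n\not\equiv0,1\pmod p$ enters, and it has to be organised so as to be consistent with the complementary case $p\mid n$ or $p\mid n-1$ (Theorem \ref{thm: calcolo dell'omologia equivariante degli spazi di configurazione quando p divide n}), where a summand collapses to a single surviving $\F_p\langle z\rangle$ or $\F_p\langle az\rangle$ and the sequence degenerates already at $E^2$. Carrying it out cleanly amounts to careful bookkeeping of Cohen's multiplicative generators, their arities modulo $p$, and the quadratic factor $k(k-1)$ that controls $\Delta$.
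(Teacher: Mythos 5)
Your proof is correct. The paper does not in fact spell out a proof of this theorem in the text --- it is recalled from \cite{Rossi1} --- but all the ingredients you invoke are precisely the ones the paper recalls for that purpose: the identification $d^2=\Delta$ on the second page (Proposition \ref{prop: differenziale dato da delta}), Cohen's free generators of $H_*(C(\C);\F_p)$ (Theorem \ref{thm: omologia spazio di configurazione }), and the explicit action of $\Delta$ on monomials (Equation \ref{eq: formule di delta sulla base dei monomi}). Your central step --- splitting $(H_*(C(\C);\F_p),\Delta)$ as a direct sum over monomials $z$ in the letters $\{Q^i[a,a],\beta Q^i[a,a]\}_{i\geq1}$ of two-term subcomplexes $\F_p\langle a^kz\rangle\xrightarrow{\,k(k-1)\,}\F_p\langle a^{k-2}[a,a]z\rangle$, and observing that each such $z$ has arity $m\equiv 0\pmod p$ so that $(n-m)(n-m-1)$ is a unit of $\F_p$ exactly when $n\not\equiv 0,1\pmod p$ --- is the cleanest direct route, and as you note it simultaneously accounts for the complementary $E^2$-degeneration of Theorem \ref{thm: calcolo dell'omologia equivariante degli spazi di configurazione quando p divide n}. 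It is worth remarking that for the analogous computation on $C_n(\C^*)$ (Theorem \ref{thm: calcolo omologia equivariante configurazione nel piano bucato}) the paper instead argues by transfer against the $n$-fold covering $C_{(n-1)\bullet+\circ}(\C^*)\to C_n(\C^*)$, comparing with a spectral sequence that is forced to degenerate for bundle-theoretic reasons; that method requires the covering degree to be prime to $p$ and somewhat hides where the second hypothesis $p\nmid n-1$ would enter, whereas your decomposition makes both hypotheses land visibly in the quadratic factor $k(k-1)$.
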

\begin{oss}
	 Consider a class $a^k[a,a]^{l}x\in H_*(C_n(\C);\F_p)$, where $k\in\N$, $l=0,1$  and $x$ is a monomial which contains only the letters $\{Q^i[a,a],\beta Q^i[a,a]\}_{i\geq 1}$. It turns out that the operator $\Delta$ acts as follows:
	\begin{equation}\label{eq: formule di delta sulla base dei monomi}
		\Delta(a^kx)=k(k-1)a^{k-2}[a,a]x \qquad \Delta(a^k[a,a]x)=0
	\end{equation}
	Therefore a basis of $coker(\Delta)$ is given by (the image of) classes in $H_*(C_n(\C);\F_p)$ which do not contain the bracket $[a,a]$
\end{oss}	
\subsection{Computations in some special cases}\label{sec: computations}

From now on we fix $\F_p$ as field of coefficients for homology, where $p$ is any prime number. We are interested in the quotient $\M_{0,n+1}/\Sigma_n$, which is homotopy equivalent to $C_n(\C)/S^1$. Corollary \ref{cor:relazione tra coomologia quoziente omotopico e quoziente stretto} allow us to do the following computation:

\begin{thm}\label{thm:omologia quoziente stretto e omotopico sono uguali}
	If $n\neq 0,1$ mod $p$ the map $f:C_n(\C)_{S^1}\to C_n(\C)/S^1$ induces an isomorphism
	\[
	f^*:H^i(C_n(\C)/S^1;\F_p)\to H^i_{S^1}(C_n(\C);\F_p)
	\]
\end{thm}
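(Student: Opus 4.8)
The plan is to deduce the statement directly from Corollary \ref{cor:relazione tra coomologia quoziente omotopico e quoziente stretto}, the only extra input being that $C_n(\C)$ has no $\Z/p$-fixed points under the hypothesis on $n$.

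First I would check that $C_n(\C)^{\Z/p}=\emptyset$ when $n\neq 0,1\bmod p$. A configuration of $n$ pairwise distinct points in $\C$ that is invariant under the generator of $\Z/p$ (rotation by $2\pi/p$) is a disjoint union of $\Z/p$-orbits; each such orbit consists either of $p$ points (the orbit of a nonzero point, on which the rotation acts freely) or of exactly one point (the origin, the unique fixed point of the rotation). Hence $n\equiv 0$ or $1\bmod p$, contrary to assumption. This is also immediate from Lemma \ref{lem: Z/p punti fissi}, whose fixed locus is nonempty only in the residue classes $0$ and $1$. For the same reason $C_n(\C)^{S^1}=\emptyset$ when $n\geq 2$, since a configuration fixed by every rotation would consist of fixed points of nontrivial rotations, i.e.\ copies of the origin, and there is only one such point.

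Then I would apply Corollary \ref{cor:relazione tra coomologia quoziente omotopico e quoziente stretto} with $X=C_n(\C)$. Since $X^{\Z/p}=\emptyset$ (and $X^{S^1}=\emptyset$), the relative groups occurring in that corollary reduce to the absolute ones, and it asserts precisely that
\[
f^*\colon H^i(C_n(\C)/S^1;\F_p)\longrightarrow H^i_{S^1}(C_n(\C);\F_p)
\]
is an isomorphism for every $i$; this is the content of Remark \ref{oss: quoziente stretto e omotopico coincidono mod p}.

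I expect no real obstacle here: the geometric heart of the matter, Proposition \ref{prop: relazione tra quoziente omotopico e stretto}, is already available, and the verification of its hypothesis in our situation is elementary — for $x\notin X^{\Z/p}\cup X^{S^1}$ the isotropy group $G_x$ is finite and, being a cyclic subgroup of $S^1$ that does not contain $\Z/p$, has order prime to $p$, so $H^j(BG_x;\F_p)=0$ for all $j>0$, which is exactly what makes Corollary \ref{cor:relazione tra coomologia quoziente omotopico e quoziente stretto} applicable. The only point worth spelling out in the final write-up is the fixed-point count above; everything else is a direct citation.
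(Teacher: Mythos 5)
Your proof is correct and takes the same route as the paper's: its entire proof reads ``Just observe that if $n\neq 0,1$ mod $p$ there are no $\Z/p$-fixed points,'' which is precisely your orbit-count argument, followed by the same appeal to Corollary~\ref{cor:relazione tra coomologia quoziente omotopico e quoziente stretto} and Remark~\ref{oss: quoziente stretto e omotopico coincidono mod p}. You simply spell out the fixed-point verification and the isotropy-group check underlying the corollary, both of which the paper leaves implicit.
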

\begin{proof}
	Just observe that if $n\neq 0,1$ mod $p$ there are no $\Z/p$-fixed points.
\end{proof}
Now suppose that $p$ divides $n$ (or $n-1$). We would like to apply the Mayer-Vietoris sequence of Proposition \ref{prop: Mayer-Vietoris} 
\begin{equation}\label{eq: Mayer-Vietoris per gli spazi di configurazione}
	\begin{tikzcd}[column sep=small]
		&\cdots\arrow[r] & H^{i}(\frac{C_n(\C)}{S^1})\arrow[r]&  H^i(\frac{C_n(\C)^{\Z/p}}{S^1})\oplus H^i_{S^1}(C_n(\C))\arrow[r]&H^{i}_{S^1}(C_n(\C)^{\Z/p})\arrow[r] &\cdots
	\end{tikzcd}
\end{equation}
to compute of $H^i(C_n(\C)/S^1;\F_p)$, but in this generality the problem is hard. Indeed we do not know $H^*(C_n(\C)^{\Z/p}/S^1;\F_p)$, which could be as difficult to compute as $H^*(C_n(\C)/S^1;\F_p)$. The only advantage is that $C_n(\C)^{\Z/p}/S^1$ is a much smaller space with respect to $C_n(\C)/S^1$. Moreover, even if we would be able to do such computation it remains to understand the maps that fit into the Mayer-Vietoris sequence, which can be an even harder problem. 
\begin{oss}\label{oss: sequenza spettrale dei punti fissi degenera}
	In the Mayer-Vietoris sequence above the terms $H^i_{S^1}(C_n(\C)^{\Z/p};\F_p)$ can be computed easily: consider the map of fibrations
	\[
	\begin{tikzcd}
		& C_n(\C)^{\Z/p}\arrow[r] \arrow[d,hook] & C_n(\C)^{\Z/p}\arrow[d,hook]\\
		& \left(C_n(\C)^{\Z/p}\right)_{\Z/p}\arrow[r] \arrow[d] & \left(C_n(\C)^{\Z/p}\right)_{S^1}\arrow[d]\\
		& B\Z/p\arrow[r] &BS^1
	\end{tikzcd}
	\]
	and observe that the mod $p$ Serre spectral sequence of the left fibration degenerates at the second page. The map of cohomological spectral sequences induced by the map of fibrations above is surjective at the $E_2$ page so the Serre spectral sequence of the right fibration degenerates at the second page as well. Therefore $H^*_{S^1}(C_n(\C)^{\Z/p};\F_p)=H^*(BS^1;\F_p)\otimes H^*(C_n(\C)^{\Z/p};\F_p)$ as $\F_p$-vector space. Finally we note that $H^*_{S^1}(C_n(\C)^{\Z/p};\F_p)$ is known: $C_n(\C)^{\Z/p}$ is homeomorphic to the space of unordered configurations of points in $\C^*$ ( Lemma \ref{lem: Z/p punti fissi}) and the homology of this space is known thanks to the work of F. Cohen \cite{Cohen} (see also Proposition \ref{prop:omologia nelle configurazioni nel piano bucato} for a computation of these homology groups).
\end{oss}
We end this paragraph with an acyclicity result:
\begin{thm}\label{thm: quoziente stretto di p punti non ha omologia modulo p}
	Let $p$ be a prime. Then 
	\[
	H^*(C_p(\C)/S^1;\F_p)\cong H^*(C_{p+1}(\C)/S^1;\F_p)\cong \begin{cases}
		\F_p \text{ if } *=0\\
		0 \text{ otherwise}
	\end{cases} 
	\]
\end{thm}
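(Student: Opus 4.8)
The plan is to apply the Mayer--Vietoris sequence of Proposition~\ref{prop: Mayer-Vietoris} to $X = C_n(\C)$ for $n = p$ and $n = p+1$, and to check that the contributing terms are as small as possible. First consider $n = p$. Here $C_p(\C)^{\Z/p}$ is homeomorphic to $C_1(\C^*) \cong \C^*$ by Lemma~\ref{lem: Z/p punti fissi} (take $q = 1$), which is $S^1$-equivariantly contractible; more precisely $C_1(\C^*)/S^1$ is a point and $C_1(\C^*)_{S^1} \simeq BS^1$. So $H^i_{S^1}(C_p(\C)^{\Z/p};\F_p) \cong H^i(BS^1;\F_p)$ and $H^i(C_p(\C)^{\Z/p}/S^1;\F_p) \cong H^i(\mathrm{pt};\F_p)$. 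Feeding this into the long exact sequence of Proposition~\ref{prop: Mayer-Vietoris}, the problem reduces to understanding the map $H^i_{S^1}(C_p(\C);\F_p) \to H^i_{S^1}(C_p(\C)^{\Z/p};\F_p) = H^i(BS^1;\F_p)$ induced by inclusion of fixed points. The case $n = p+1$ is entirely analogous: $C_{p+1}(\C)^{\Z/p} \cong C_1(\C^*)$ again, so the same reduction applies with $H^*_{S^1}(C_{p+1}(\C);\F_p)$ in place of $H^*_{S^1}(C_p(\C);\F_p)$.

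Next I would pin down $H^*_{S^1}(C_n(\C);\F_p)$ for $n = p$ and $n = p+1$ using Theorem~\ref{thm: calcolo dell'omologia equivariante degli spazi di configurazione quando p divide n}: since $p \equiv 0$ and $p+1 \equiv 1$ mod $p$, the spectral sequence of $C_n(\C)\hookrightarrow C_n(\C)_{S^1}\to BS^1$ degenerates at $E_2$, so $H^*_{S^1}(C_n(\C);\F_p) \cong H_*(C_n(\C);\F_p)\otimes H_*(BS^1;\F_p)$ (up to the usual homological/cohomological bookkeeping). By Cohen's theorem (Theorem~\ref{thm: omologia spazio di configurazione }), $H_*(C_n(\C);\F_p)$ for $n \le p+1$ is concentrated in the ``polynomial'' part: for $n \le p$ the only monomials available are $a^n$ for $n \le p$ (when $p$ is odd the brackets $[a,a]$ etc.\ live in arity $2$, so for small $n$ one gets $a^n$ in degree $0$ and, for $n=2$, also $[a,a]$ in degree $1$), and for $n=2$ one has $H_*(C_2(\C);\F_p)$ spanned by $a^2$ and $[a,a]$ — but here $n=p$ or $p+1$, which for $p \ge 3$ means $n \ge 3$, so the relevant monomials are just $a^p$ and $a^{p-1}[a,a]\cdots$, etc. The crucial point is that the homology of $C_n(\C)$ for $n \le p+1$ is one-dimensional in each relevant spot; in fact $C_n(\C)$ is $\F_p$-acyclic for $2 \le n \le p$... wait — I should be careful. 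I would compute directly: for $p$ odd and $n = p$, the monomials of weight $p$ in the free graded-commutative algebra on $\{a, [a,a], Q^i[a,a], \beta Q^i[a,a]\}$ are $a^p$ (degree $0$) and $a^{p-2}[a,a]$ (degree $1$), plus possibly $a^{p-2p}\cdots$ which needs $p \ge 2p$, impossible. So $H_*(C_p(\C);\F_p)$ is $\F_p$ in degrees $0$ and $1$. Then I would use Proposition~\ref{prop: differenziale dato da delta} together with Equation~\ref{eq: formule di delta sulla base dei monomi}: $\Delta(a^p) = p(p-1)a^{p-2}[a,a] = 0$ in $\F_p$, so the $d^2$ differential vanishes and the spectral sequence of Theorem~\ref{thm: calcolo dell'omologia equivariante degli spazi di configurazione quando p divide n} indeed degenerates, giving $H^*_{S^1}(C_p(\C);\F_p) \cong \F_p[u] \oplus \F_p[u]\cdot v$ with $|u|=2$, $|v|=1$.

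The main obstacle will be identifying the restriction map $H^*_{S^1}(C_p(\C);\F_p) \to H^*_{S^1}(C_p(\C)^{\Z/p};\F_p) = \F_p[u]$ precisely enough to run the long exact sequence. I would argue that this map is surjective: the inclusion $C_p(\C)^{\Z/p} \hookrightarrow C_p(\C)$ is compatible with the map down to $BS^1$, and the class $u \in H^2(BS^1;\F_p)$ pulls back to a nonzero class in $H^2_{S^1}(C_p(\C);\F_p)$ (because the $E_2$-page identification is as an $H^*(BS^1)$-module and the degeneration is multiplicative), whose further restriction to the fixed locus is again the generator. Granting surjectivity, the long exact sequence
\[
\cdots \to H^i(C_p(\C)/S^1;\F_p) \to H^i(\mathrm{pt};\F_p)\oplus H^i_{S^1}(C_p(\C);\F_p) \to H^i(BS^1;\F_p) \to H^{i+1}(C_p(\C)/S^1;\F_p)\to\cdots
\]
shows that the middle map is surjective in all degrees, and a dimension count (the point contributes $\F_p$ in degree $0$, $H^*_{S^1}(C_p(\C);\F_p)$ has total Poincaré series $\tfrac{1+t}{1-t^2} = \tfrac{1}{1-t}$, and $H^*(BS^1;\F_p)$ has series $\tfrac{1}{1-t^2}$) forces $H^*(C_p(\C)/S^1;\F_p)$ to have Poincaré series $\tfrac{1}{1-t} + \tfrac{1}{1-t^2} - \tfrac{1}{1-t^2} = 1$... let me redo this: $\dim H^i(\mathrm{pt}) + \dim H^i_{S^1}(C_p(\C)) - \dim H^i(BS^1) = [i=0] + (\text{coeff of }t^i\text{ in }\tfrac{1}{1-t}) - (\text{coeff in }\tfrac{1}{1-t^2})$, and since the maps are as exact as the count allows, $H^i(C_p(\C)/S^1;\F_p) = 0$ for $i > 0$ and $\F_p$ for $i = 0$. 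The same computation with $C_{p+1}(\C)$ (whose $\F_p$-homology is also $\F_p$ in degrees $0$ and $1$, coming from $a^{p+1}$ and $a^{p-1}[a,a]$, with $\Delta(a^{p+1}) = (p+1)p\,a^{p-1}[a,a] = 0$ mod $p$) gives the statement for $n = p+1$. For the edge cases $p = 2$, $n = 2$ and $n = 3$, I would just verify directly that $C_2(\C)/S^1$ and $C_3(\C)/S^1$ are $\F_2$-acyclic (e.g.\ $C_2(\C) \simeq \R P^\infty$-like after quotient, or via the combinatorial cactus model of Section~\ref{sec: a combinatorial model using cacti}), so the claimed uniformity in $p$ holds. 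I expect the surjectivity of the fixed-point restriction map to be the one genuinely delicate point; everything else is bookkeeping with the spectral sequences already set up in the paper.
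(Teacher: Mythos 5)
You are following essentially the same route as the paper (the Mayer--Vietoris sequence of Proposition~\ref{prop: Mayer-Vietoris}, the $E_2$-degeneration of Theorem~\ref{thm: calcolo dell'omologia equivariante degli spazi di configurazione quando p divide n}, and a rank count), but there is a genuine error at the first step that, if not fixed, makes the count come out wrong.

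You claim that $C_1(\C^*)$ is ``$S^1$-equivariantly contractible'' with $C_1(\C^*)_{S^1}\simeq BS^1$, and hence that $H^i_{S^1}(C_p(\C)^{\Z/p};\F_p)\cong H^i(BS^1;\F_p)=\F_p[u]$. This is not the $S^1$-action that the fixed locus actually carries. A point of $C_p(\C)^{\Z/p}$ is a configuration $\{z,\zeta z,\dots,\zeta^{p-1}z\}$ with $\zeta=e^{2\pi i/p}$, and $e^{i\theta}$ fixes it exactly when $e^{i\theta}\in\langle\zeta\rangle$; so \emph{every} point has stabilizer $\Z/p$. (The same holds for $n=p+1$, where there is an additional fixed point at the origin.) Since the strict quotient is a half-line, the homotopy quotient is a $B\Z/p$-bundle over a contractible base, i.e.\ $(C_p(\C)^{\Z/p})_{S^1}\simeq B\Z/p$, \emph{not} $BS^1$. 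Thus $H^*_{S^1}(C_p(\C)^{\Z/p};\F_p)\cong H^*(B\Z/p;\F_p)$, which is one-dimensional in \emph{every} degree; this is exactly what Remark~\ref{oss: sequenza spettrale dei punti fissi degenera} gives, namely $H^*(BS^1;\F_p)\otimes H^*(C_p(\C)^{\Z/p};\F_p)$ with $C_p(\C)^{\Z/p}\simeq S^1$.

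This error propagates. With your Poincar\'e series $\tfrac{1}{1-t^2}$ for the fixed-locus term, the dimension count in the long exact sequence produces $\F_p$ in every positive odd degree of $H^*(C_p(\C)/S^1;\F_p)$ — you half-noticed this when you wrote ``let me redo this'' and got an inconsistency, but then asserted the conclusion anyway. With the correct series $\tfrac{1}{1-t}$ the count closes: $[i{=}0]+\tfrac{1}{1-t}-\tfrac{1}{1-t}=[i{=}0]$. Your surjectivity argument (pull back $u\in H^2(BS^1)$) also does not survive the correction: the image of $H^*(BS^1;\F_p)\to H^*(B\Z/p;\F_p)$ misses the odd-degree classes of the target, so you cannot conclude surjectivity this way. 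The paper instead establishes the \emph{injectivity} of the restriction $H^*_{S^1}(C_p(\C);\F_p)\to H^*_{S^1}(C_p(\C)^{\Z/p};\F_p)$ via the localization theorem, using that $H^*_{S^1}(C_p(\C);\F_p)$ is a free $H^*(BS^1;\F_p)$-module by the $E_2$-degeneration (this is Lemma~\ref{lem:mono in coomologia equivariante}); since both sides are one-dimensional in each degree, injectivity is automatically an isomorphism, and the Mayer--Vietoris sequence then forces the stated vanishing. You should replace your ad hoc surjectivity argument with this localization argument and correct the identification of the fixed-locus equivariant cohomology.
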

In order to prove this Theorem we need a preliminary Lemma:
\begin{lem}\label{lem:mono in coomologia equivariante}
	Let  $n\in\N$ and $p$ be a prime. Suppose $n=0,1$ mod $p$. If $i:C_n(\C)^{\Z/p}\hookrightarrow C_n(\C)$ is the inclusion then \[
	i^*:H^k_{S^1}(C_n(\C);\F_p)\to H^k_{S^1}(C_n(\C)^{\Z/p};\F_p)
	\]
	is a monomorphism.
\end{lem}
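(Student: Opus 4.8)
The plan is to prove injectivity of $i^{*}$ by inverting the polynomial generator $u\in H^{2}(BS^{1};\F_{p})$ and then invoking a localization theorem for $S^{1}$-actions; the hypothesis $n\equiv 0,1$ mod $p$ enters precisely because it guarantees that the $\Z/p$-fixed set is nonempty (Lemma \ref{lem: Z/p punti fissi}) and that the relevant spectral sequences degenerate.

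First I would record that both $H^{*}_{S^{1}}(C_{n}(\C);\F_{p})$ and $H^{*}_{S^{1}}(C_{n}(\C)^{\Z/p};\F_{p})$ are torsion-free as modules over $R\coloneqq H^{*}(BS^{1};\F_{p})=\F_{p}[u]$. For the first, since $n\equiv 0,1$ mod $p$, Theorem \ref{thm: calcolo dell'omologia equivariante degli spazi di configurazione quando p divide n} says the Serre spectral sequence of $C_{n}(\C)\hookrightarrow C_{n}(\C)_{S^{1}}\to BS^{1}$ degenerates at $E_{2}$, so the associated graded of $H^{*}_{S^{1}}(C_{n}(\C);\F_{p})$ for the Serre filtration is the free $R$-module $H^{*}(C_{n}(\C);\F_{p})\otimes R$; since multiplication by $u$ raises that (bounded, exhaustive) filtration by $2$, $u$-torsion-freeness of the associated graded forces it for $H^{*}_{S^{1}}(C_{n}(\C);\F_{p})$ itself. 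The same argument via Remark \ref{oss: sequenza spettrale dei punti fissi degenera} handles $C_{n}(\C)^{\Z/p}$. Hence both cohomologies embed into their localizations at $u$, and consequently $i^{*}$ is injective as soon as the localized map $i^{*}[u^{-1}]$ is injective: the composite $H^{*}_{S^{1}}(C_{n}(\C);\F_{p})\hookrightarrow H^{*}_{S^{1}}(C_{n}(\C);\F_{p})[u^{-1}]\xrightarrow{\,i^{*}[u^{-1}]\,}H^{*}_{S^{1}}(C_{n}(\C)^{\Z/p};\F_{p})[u^{-1}]$ factors through $i^{*}$.

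Next I would show that $i^{*}[u^{-1}]$ is in fact an isomorphism, which is the content of the localization theorem. Writing $X\coloneqq C_{n}(\C)$ and $U\coloneqq X\setminus X^{\Z/p}$, the key geometric point is that every isotropy group $(S^{1})_{x}$ with $x\in U$ is finite of order prime to $p$: a closed subgroup of $S^{1}$ is either $S^{1}$ itself, which would force $x\in X^{S^{1}}\subseteq X^{\Z/p}$, or a finite cyclic group $C_{m}$, and $p\mid m$ would give $\Z/p\subseteq(S^{1})_{x}$, again contradicting $x\notin X^{\Z/p}$. Near an orbit $S^{1}/C_{m}\subset U$ with slice $D$ one has $H^{*}_{S^{1}}(S^{1}/C_{m}\times D;\F_{p})\cong H^{*}(BC_{m};\F_{p})=\F_{p}$, on which $u$ acts by $0$ since $H^{2}(BC_{m};\F_{p})=0$ for $p\nmid m$; as $X$ (and hence $U$) has the homotopy type of a finite complex, $H^{*}_{S^{1}}(U;\F_{p})$ is a finitely generated $\F_{p}[u]$-module that is locally $u$-torsion, hence $u$-torsion, so $H^{*}_{S^{1}}(U;\F_{p})[u^{-1}]=0$. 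Combining an $S^{1}$-invariant tubular neighbourhood of the closed invariant submanifold $X^{\Z/p}\subset X$ with excision and the long exact sequence of the pair $(X,X^{\Z/p})$ in $H^{*}_{S^{1}}(-;\F_{p})$, localized at $u$, then yields that $i^{*}[u^{-1}]$ is an isomorphism. (Alternatively, one may simply cite the mod $p$ localization theorem for $S^{1}$-spaces, e.g.\ from \cite{BredonGroups} or \cite{TomDieck}, with $X^{\Z/p}$ in the role of the fixed set.)

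The main obstacle is the step from the local computation near a single orbit to the global vanishing $H^{*}_{S^{1}}(U;\F_{p})[u^{-1}]=0$: this is a standard fact of transformation-group theory, but making it fully rigorous requires some care with the non-compactness of $C_{n}(\C)$ and of $U$, with the existence of finite cell models (the braid group and $C_{q}(\C^{*})$ both admit finite $K(\pi,1)$'s), and with the mod $p$ Leray spectral sequence of $U_{S^{1}}\to U/S^{1}$. Everything preceding that step — the torsion-freeness, the reduction of injectivity of $i^{*}$ to injectivity of $i^{*}[u^{-1}]$, and the excision/long exact sequence bookkeeping — is purely formal.
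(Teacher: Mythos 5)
Your proof takes essentially the same approach as the paper's: deduce that $H^*_{S^1}(C_n(\C);\F_p)$ is a torsion-free (in fact free) $\F_p[u]$-module from the $E_2$-degeneration of Theorem \ref{thm: calcolo dell'omologia equivariante degli spazi di configurazione quando p divide n}, so that it injects into its localization, and then invoke the mod $p$ localization theorem for $S^1$-spaces (citing \cite{TomDieck}) to conclude that $i^*$ becomes an isomorphism after inverting $u$, whence $i^*$ itself is injective by the commutative square. The only differences are cosmetic: you additionally note torsion-freeness of the target (which is not needed) and sketch a direct proof of the localization step via isotropy groups on $U=X\setminus X^{\Z/p}$, whereas the paper simply cites tom Dieck.
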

\begin{proof}
	Consider the commutative diagram
	\[
	\begin{tikzcd}
		& H^k_{S^1}(C_n(\C))\arrow[r,"i^*"]\arrow[d] & H^k_{S^1}(C_n(\C)^{\Z/p})\arrow[d]\\
		&\left( S^{-1}H^*_{S^1}(C_n(\C))\right)^{k} \arrow[r,"i^*"]&\left( S^{-1}H^*_{S^1}(C_n(\C)^{\Z/p})\right)^{k} 
	\end{tikzcd}
	\]
	where $S$ is the multiplicatively closed subset $\{1,c,c^2,c^3,\dots\}\subseteq H^*(BS^1;\F_p)=\F_p[c]$, with $c$ be a variable of degree two. By the Localization Theorem (see \cite[Theorem 4.2 p.198]{TomDieck}) the bottom horizontal arrow is an isomorphism. The cohomological version of Theorem \ref{thm: calcolo dell'omologia equivariante degli spazi di configurazione quando p divide n} tells us that the mod $p$ spectral sequence associated to the fibration $C_n(\C)\hookrightarrow C_n(\C)_{S^1}\to BS^1$ degenerates at the $E_2$ page, so $H^*_{S^1}(C_n(\C);\F_p)$ is a free $H^*(BS^1)$-module. Therefore the left vertical arrow is a monomorphism, and this proves the statement.
\end{proof}
\begin{proof}(of Theorem \ref{thm: quoziente stretto di p punti non ha omologia modulo p})
	We do the case of $C_p(\C)$, the other one is completely analogous. First of all note that $C_p(\C)^{\Z/p}\cong C_1(\C^*)=\C^*$. Therefore
	\[
	C_p(\C)^{\Z/p}/S^1\cong\C^*/S^1
	\]
	and we conclude that $C_p(\C)^{\Z/p}/S^1$ is contractible. If $i\geq 1$ the Mayer-Vietoris sequence \ref{eq: Mayer-Vietoris per gli spazi di configurazione} becomes
	\[
	\begin{tikzcd}[column sep=small]
		&\cdots\arrow[r] & H^{i}(\frac{C_p(\C)}{S^1})\arrow[r]& H^i_{S^1}(C_p(\C))\arrow[r]&H^{i}_{S^1}(C_p(\C)^{\Z/p})\arrow[r] &\cdots
	\end{tikzcd}
	\]
	By Lemma \ref{lem:mono in coomologia equivariante} the map $
	H^k_{S^1}(C_p(\C);\F_p)\to H^k_{S^1}(C_p(\C)^{\Z/p};\F_p)$
	is a monomorphism, therefore the Mayer-Vietoris sequence splits (for $i\geq 1$) as:
	\[
	\begin{tikzcd}[column sep=small]
		& 0\arrow[r] & H^i_{S^1}(C_p(\C))\arrow[r]&H^{i}_{S^1}(C_p(\C)^{\Z/p})\arrow[r] & H^{i+1}(\frac{C_p(\C)}{S^1})\arrow[r] & 0
	\end{tikzcd}
	\]
	To conclude the proof we show that $ H^i_{S^1}(C_p(\C);\F_p)$ and $H^{i}_{S^1}(C_p(\C)^{\Z/p};\F_p)$ are vector spaces of the same dimension: by Theorem \ref{thm: calcolo dell'omologia equivariante degli spazi di configurazione quando p divide n} we have $H^*_{S^1}(C_p(\C);\F_p)\cong H^*(C_p(\C);\F_p)\otimes H^*(BS^1;\F_p)$ as $H^*(BS^1;\F_p)$-module. Similarly $H^*_{S^1}(C_p(\C)^{\Z/p};\F_p)\cong H^*(C_p(\C)^{\Z/p};\F_p)\otimes H^*(BS^1;\F_p)$ by Remark \ref{oss: sequenza spettrale dei punti fissi degenera}. Finally observe that $H_*(C_p(\C);\F_p)$ is generated by a class of degree zero and one of degree one (i.e. $a^p$ and $[a,a]a^{p-2}$) therefore $H^i(C_p(\C);\F_p)$ and $H^i(C_p(\C)^{\Z/p};\F_p)\cong H^i(S^1;\F_p)$ have the same dimension for each $i\in\N$. As a consequence we get that $ H^i_{S^1}(C_p(\C);\F_p)$ and $H^{i}_{S^1}(C_p(\C)^{\Z/p};\F_p)$ have the same dimension, as claimed.
\end{proof}

\subsection{Examples}\label{sec:examples}
In what follows we use the results of the previous paragraph to do some explicit computations of $H_*(C_n(\C)/S^1;\F_p)$. In particular, we will see that the quotients $C_{n}(\C)/S^1$ are contractible until $n=6$, which is the first non contractible space.
\paragraph{n=1:} $C_1(\C)/S^1$ is homeomorphic to a half line, so it is contractible.
\paragraph{n=2:} by the combinatorial model explained in Section \ref{sec: a combinatorial model using cacti} $C_2(\C)/S^1$ is homotopy equivalent to a CW-complex with only one cell of dimension zero, so it is contractible.
\paragraph{n=3:} by the combinatorial model explained in Section \ref{sec: a combinatorial model using cacti} $C_3(\C)/S^1$ is homotopy equivalent to a CW-complex of dimension one (see also Figure \ref{fig:esempio con tre lobi}), so there are no homology classes of degrees strictly greater than 1. But $C_3(\C)/S^1$ is simply connected, so $H_1(C_3(\C)/S^1;\Z)=0$ and this implies that $C_3(\C)/S^1$ is contractible. 
\paragraph{n=4:} We prove that $C_4(\C)/S^1$ is contractible by showing that $H_i(C_4(\C)/S^1;\Z)=0$ for any $i\geq 1$. This is enough to prove the claim since $C_4(\C)/S^1$ is homotopic to a CW complex of dimension two, and it is simply connected. By the cellular model we know that there are no homology classes of degrees strictly greater than 2.  Rationally $C_4(\C)/S^1\simeq\M_{0,5}/\Sigma_4$ has the same homology of a point, therefore its Euler characteristic must be one:
\[
1=b_0-b_1+b_2
\]
where $b_i$ is $i$-th Betti number. But $C_4(\C)/S^1$ is simply connected, so $b_1=0$ and combining this fact with the equation above we get that $b_2=0$. Therefore $C_4(\C)/S^1$ is contractible.

\paragraph{n=5:} We will prove that $C_5(\C)/S^1$ is contractible by showing that for any prime $p$ and any $i\geq 1$ $H_i(C_5(\C)/S^1;\F_p)=0$. The cellular model tells us that $C_5(\C)/S^1$ has no homology classes of degrees strictly greater than three. By Theorem \ref{thm: bound sulla torsione dei quozienti} the order of any class in $H_*(C_5(\C);\Z)$ divides $5!$. So the only interesting cases are when we take $\F_2$, $\F_3$ and $\F_5$ as coefficients for homology.
\begin{itemize}
	\item $\F_2$-coefficients: by Proposition \ref{prop: Mayer-Vietoris} we have a Mayer-Vietoris sequence
	\[
	\begin{tikzcd}[column sep=small]
		&\cdots\arrow[r]& H^{i}(\frac{C_5(\C)}{S^1})\arrow[r] &  H^i(\frac{C_5(\C)^{\Z/2}}{S^1})\oplus H^i_{S^1}(C_5(\C))\arrow[r]&H^{i}_{S^1}(C_5(\C)^{\Z/2})\arrow[r] & \cdots
	\end{tikzcd}
	\]
	Now observe that $C_5(\C)^{\Z/2}$ is homeomorphic $C_2(\C^*)$, whose homology with $\F_2$-coefficients is generated by the classes listed below:
	\[
	\begin{tabular}{cc}
		\toprule
		Homology class  & Degree\\
		\midrule
		$a^2b$ & $0$ \\
		$[b,a]a$ & $1$ \\
		$b\cdot Qa$ & $1$ \\
		$[[b,a],a]$ & $2$ \\
		
		\bottomrule
	\end{tabular}
	\]
	For the notation and more details about $H_*(C_n(\C^*);\F_p)$ see Proposition \ref{prop:omologia nelle configurazioni nel piano bucato}. Moreover, $C_2(\C^*)/S^1$ is homotopy equivalent to a circle (see Figure \ref{fig:equivalenza omotopica con il cerchio} and its caption for an explanation). The second pages of the Serre spectral sequences that compute $H^*_{S^1}(C_5(\C);\F_2)$ and $H^*_{S^1}(C_5(\C)^{\Z/2};\F_2)$ are displayed below:
	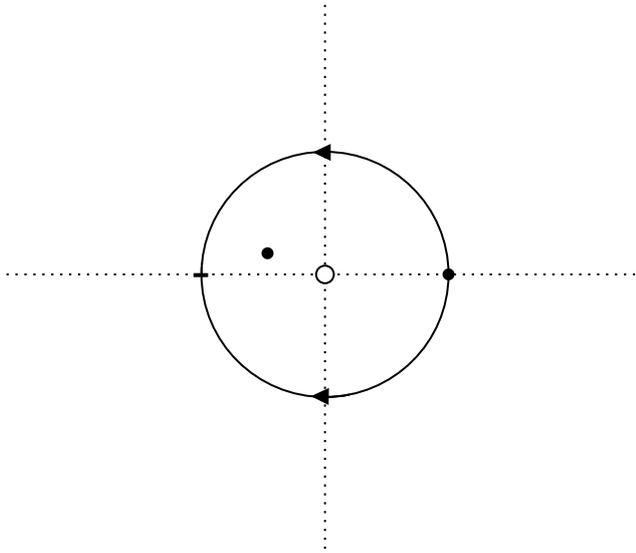
\begin{figure}
		\centering

		\tikzset{every picture/.style={line width=0.75pt}} 
		
		\begin{tikzpicture}[x=0.75pt,y=0.75pt,yscale=-1,xscale=1]
			
			\draw  [draw opacity=0] (309.6,150.63) .. controls (311.59,150.43) and (313.62,150.33) .. (315.67,150.33) .. controls (349.72,150.33) and (377.33,177.94) .. (377.33,212) .. controls (377.33,212.13) and (377.33,212.25) .. (377.33,212.38) -- (315.67,212) -- cycle ; \draw    (312.61,150.41) .. controls (313.62,150.36) and (314.64,150.33) .. (315.67,150.33) .. controls (349.72,150.33) and (377.33,177.94) .. (377.33,212) .. controls (377.33,212.13) and (377.33,212.25) .. (377.33,212.38) ;  \draw [shift={(309.6,150.63)}, rotate = 359.94] [fill={rgb, 255:red, 0; green, 0; blue, 0 }  ][line width=0.08]  [draw opacity=0] (8.93,-4.29) -- (0,0) -- (8.93,4.29) -- cycle    ;
			\draw  [fill={rgb, 255:red, 0; green, 0; blue, 0 }  ,fill opacity=1 ] (374.83,212) .. controls (374.83,210.62) and (375.95,209.5) .. (377.33,209.5) .. controls (378.71,209.5) and (379.83,210.62) .. (379.83,212) .. controls (379.83,213.38) and (378.71,214.5) .. (377.33,214.5) .. controls (375.95,214.5) and (374.83,213.38) .. (374.83,212) -- cycle ;
			\draw  [dash pattern={on 0.84pt off 2.51pt}]  (156.67,212) -- (474.67,212) ;
			\draw  [dash pattern={on 0.84pt off 2.51pt}]  (315.67,350) -- (315.67,74) ;
			\draw  [draw opacity=0] (377.33,212.34) .. controls (377.15,246.24) and (349.61,273.67) .. (315.67,273.67) .. controls (313.29,273.67) and (310.94,273.53) .. (308.63,273.27) -- (315.67,212) -- cycle ; \draw    (377.33,212.34) .. controls (377.15,246.24) and (349.61,273.67) .. (315.67,273.67) .. controls (314.3,273.67) and (312.94,273.62) .. (311.6,273.53) ; \draw [shift={(308.63,273.27)}, rotate = 0.97] [fill={rgb, 255:red, 0; green, 0; blue, 0 }  ][line width=0.08]  [draw opacity=0] (8.93,-4.29) -- (0,0) -- (8.93,4.29) -- cycle    ; 
			\draw  [draw opacity=0] (254,212.37) .. controls (254,212.24) and (254,212.12) .. (254,212) .. controls (254,177.94) and (281.61,150.33) .. (315.67,150.33) -- (315.67,212) -- cycle ; \draw    (254,212.37) .. controls (254,212.24) and (254,212.12) .. (254,212) .. controls (254,177.94) and (281.61,150.33) .. (315.67,150.33) ;  
			\draw  [draw opacity=0] (327.9,272.49) .. controls (323.95,273.28) and (319.85,273.7) .. (315.67,273.7) .. controls (281.61,273.7) and (254,246.09) .. (254,212.03) .. controls (254,210.13) and (254.09,208.25) .. (254.25,206.39) -- (315.67,212.03) -- cycle ; \draw    (327.9,272.49) .. controls (323.95,273.28) and (319.85,273.7) .. (315.67,273.7) .. controls (281.61,273.7) and (254,246.09) .. (254,212.03) .. controls (254,210.13) and (254.09,208.25) .. (254.25,206.39) ;  
			\draw  [fill={rgb, 255:red, 0; green, 0; blue, 0 }  ,fill opacity=1 ] (284.5,201.37) .. controls (284.5,199.99) and (285.62,198.87) .. (287,198.87) .. controls (288.38,198.87) and (289.5,199.99) .. (289.5,201.37) .. controls (289.5,202.75) and (288.38,203.87) .. (287,203.87) .. controls (285.62,203.87) and (284.5,202.75) .. (284.5,201.37) -- cycle ;
			\draw  [fill={rgb, 255:red, 255; green, 255; blue, 255 }  ,fill opacity=1 ] (311.25,212.03) .. controls (311.25,209.59) and (313.23,207.62) .. (315.67,207.62) .. controls (318.11,207.62) and (320.08,209.59) .. (320.08,212.03) .. controls (320.08,214.47) and (318.11,216.45) .. (315.67,216.45) .. controls (313.23,216.45) and (311.25,214.47) .. (311.25,212.03) -- cycle ;
			\draw [line width=1.5]    (250,212.37) -- (257.33,212.37) ;

		\end{tikzpicture}
		
		\caption{This picture explains the homotopy equivalence between $C_2(\C^*)/S^1$ and $S^1$: first of all note that $C_2(\C^*)/S^1$ is homotopy equivalent to $C_2(\C^*)/\C^*$. Now observe that any configuration $\{z_1,z_2\}\in C_2(\C^*)$ is equivalent (up to rotations and dilations) to a configuration of the form $\{1,z\}$, where $z\in D^2-\{0,1\}$. If $\abs{z_1}\neq \abs{z_2}$ there is a unique representative $\{1,z\}$ of the class $[z_1,z_2]\in C_2(\C^*)/\C^*$. In the case $\abs{z_1}=\abs{z_2}$ there are two representatives of $[z_1,z_2]$: $\{1,z_1z_2^{-1}\}$ and $\{1,z_2z_1^{-1}\}$. Therefore $C_2(\C^*)/\C^*$ is homeomorphic to the space obtained from $D^2-\{0,1\}$ by gluing the boundary of the disk according to the relation $z\sim z^{-1}$. This space is homeomorphic to $S^2$ minus two points, so we can conclude that $C_2(\C^*)/\C^*$ is homotopy equivalent to a circle. }
		\label{fig:equivalenza omotopica con il cerchio}
	\end{figure}
	
	\begin{sseqpage}[classes = {draw = none }, xscale = 0.6, yscale=0.8, no ticks]

		\class["1"](0,0)
		\class["1"](0,1)
		\class["1"](0,2)
		\class["1"](0,3)
		\class["0"](1,0)
		\class["0"](1,1)
		\class["0"](1,2)
		\class["0"](1,3)
		\class["1"](2,0)
		\class["1"](2,1)
		\class["1"](2,2)
		\class["1"](2,3)
		\class["0"](3,0)
		\class["0"](3,1)
		\class["0"](3,2)
		\class["0"](3,3)
		\class["1"](4,0)
		\class["1"](4,1)
		\class["1"](4,2)
		\class["1"](4,3)
		\class["0"](5,0)
		\class["0"](5,1)
		\class["0"](5,2)
		\class["0"](5,3)
		\class["1"](6,0)
		\class["1"](6,1)
		\class["1"](6,2)
		\class["1"](6,3)
		\class["\cdots"](7,0)
		\class["\cdots"](7,1)
		\class["\cdots"](7,2)
		\class["\cdots"](7,3)
		\node[background] at (3.5,-1.5) {H^*_{S^1}(C_5(\C);\F_2)};
		
	\end{sseqpage}
	\qquad
	\begin{sseqpage}[classes = {draw = none }, xscale = 0.6, yscale=0.8, no ticks]
		\class["1"](0,0)
		\class["2"](0,1)
		\class["1"](0,2)
		\class["0"](1,0)
		\class["0"](1,1)
		\class["0"](1,2)
		
		\class["1"](2,0)
		\class["2"](2,1)
		\class["1"](2,2)
		
		\class["0"](3,0)
		\class["0"](3,1)
		\class["0"](3,2)
		
		\class["1"](4,0)
		\class["2"](4,1)
		\class["1"](4,2)
		
		\class["0"](5,0)
		\class["0"](5,1)
		\class["0"](5,2)
		
		\class["1"](6,0)
		\class["2"](6,1)
		\class["1"](6,2)
		
		\class["\cdots"](7,0)
		\class["\cdots"](7,1)
		\class["\cdots"](7,2)
		\node[background] at (3.5,-1.5) {H^*_{S^1}(C_5(\C)^{\Z/2};\F_2)};
	\end{sseqpage}
	
	As we know, both the spectral sequences degenerate at the second page (Theorem \ref{thm: calcolo dell'omologia equivariante degli spazi di configurazione quando p divide n} and Remark \ref{oss: sequenza spettrale dei punti fissi degenera}) and summing over the diagonals we get the ranks of $H^i_{S^1}(C_5(\C))$ and $H^i_{S^1}(C_5(\C)^{\Z/2})$. By Lemma \ref{lem:mono in coomologia equivariante} the inclusion $i: C_5(\C)^{\Z/2}\hookrightarrow C_5(\C)$ induces a monomorphism $i^*:H^i_{S^1}(C_5(\C);\F_2)\to H^i_{S^1}(C_5(\C)^{\Z/2};\F_2)$ in each degree, so we can conclude that it is an isomorphism for any $i\geq 2$ by looking at the ranks. If we put all these information in the Mayer-Vietoris sequence we conclude immediately that
	\[
	H^i(C_5(\C)/S^1;\F_2)=0 \quad \text{ for all } i\geq 3
	\]
	Now $C_5(\C)/S^1$ is simply connected, so the first (co)homology group is zero. By the same argument as before using the Euler characteristic we can conclude that $H^2(C_5(\C)/S^1;\F_2)=0$ as well. So there is no $2$-torsion in $H_*(C_5(\C)/S^1;\Z)$.
	
	\item $\F_3$-coefficients: by Theorem \ref{thm:omologia quoziente stretto e omotopico sono uguali} $H_*(C_5(\C)/S^1;\F_3)\cong H_*^{S^1}(C_5(\C);\F_3)$. Theorem \ref{thm:calcolo omologia equivariante quando p non divide n o n-1} tells us that $H_*^{S^1}(C_5(\C);\F_3)$ is the subspace of $H_*(C_5(\C);\F_3)$ spanned by the classes which do not contain the bracket $[a,a]$. In this case $H_*(C_5(\C);\F_3)$ has only two classes: $a^5$ of degree zero and $a^3[a,a]$ of degree one, so we can conclude that $H_*^{S^1 }(C_5(\C);\F_3)$ is trivial. Hence there is no $3$-torsion in $H_*(C_5(\C)/S^1;\Z)$.
	
	\item $\F_5$-coefficients: By Theorem \ref{thm: quoziente stretto di p punti non ha omologia modulo p} $H^i(C_5(\C)/S^1;\F_5)=0$ for any $i\geq 1$. Thus there is no $5$-torsion in $H_*(C_5(\C)/S^1;\Z)$. 
\end{itemize}

\paragraph{n=6:} as we said at the beginning of this paragraph, $C_6(\C)/S^1$ is the first non contractible space. In particular its homology with $\F_3$-coefficients will be not trivial. By Theorem \ref{thm: bound sulla torsione dei quozienti} the order of any class in $H_*(C_6(\C)/S^1;\Z)$ divides $6!$. So the only interesting coefficients for homology are $\F_2$, $\F_3$ and $\F_5$. From the cellular model described in Section \ref{sec: a combinatorial model using cacti} we know that $C_6(\C)/S^1$ is homotopy equivalent to a CW-complex of dimension $4$, therefore $H^i(C_6(\C)/S^1;\Z)=0$ for any $i\geq 5$. Moreover $C_6(\C)/S^1$ is simply connected and its Euler characteristic is equal to one, so we get $0=b_2-b_3+b_4$. 
\begin{itemize}
	\item $\F_5$-coefficients: by Theorem \ref{thm: quoziente stretto di p punti non ha omologia modulo p} $H^i(C_6(\C)/S^1;\F_5)=0$ for any $i\geq 1$, so there is no $5$-torsion in $H_*(C_5(\C)/S^1;\Z)$. 
	\item $\F_3$-coefficients: consider the Mayer-Vietoris sequence
	\[
	\begin{tikzcd}[column sep=small]
		& \cdots\arrow[r] & H^{i}(\frac{C_6(\C)}{S^1})\arrow[r]&  H^i(\frac{C_6(\C)^{\Z/3}}{S^1})\oplus H^i_{S^1}(C_6(\C))\arrow[r]&H^{i}_{S^1}(C_6(\C)^{\Z/3})\arrow[r] & \cdots
	\end{tikzcd}
	\]
	Now observe that $C_6(\C)^{\Z/3}$ is homeomorphic $C_2(\C^*)$, whose homology with $\F_3$-coefficients is generated by the classes listed below:
	\[
	\begin{tabular}{cc}
		
		\toprule
		Homology class  & Degree\\
		\midrule
		$ba^2$ & $0$ \\
		$b[a,a]$ & $1$ \\
		$[b,a]a$ & $1$ \\
		$[[b,a],a]$ & $2$\\
		
		\bottomrule
	\end{tabular}
	\]
	As we already know $C_2(\C^*)/S^1$ is homotopy equivalent to a circle, so it has no classes of degrees greater that two. The second pages of the Serre spectral sequences that compute $H^*_{S^1}(C_6(\C);\F_3)$ and $H^*_{S^1}(C_6(\C)^{\Z/3};\F_3)$ are displayed below:

	\begin{sseqpage}[classes = {draw = none }, xscale = 0.6, yscale=0.8, no ticks]

		\class["1"](0,0)
		\class["1"](0,1)
		\class["0"](0,2)
		\class["0"](0,3)
		\class["1"](0,4)
		\class["1"](0,5)

		\class["0"](1,0)
		\class["0"](1,1)
		\class["0"](1,2)
		\class["0"](1,3)
		\class["0"](1,4)
		\class["0"](1,5)
		
		\class["1"](2,0)
		\class["1"](2,1)
		\class["0"](2,2)
		\class["0"](2,3)
		\class["1"](2,4)
		\class["1"](2,5)
		
		\class["0"](3,0)
		\class["0"](3,1)
		\class["0"](3,2)
		\class["0"](3,3)
		\class["0"](3,4)
		\class["0"](3,5)
		
		\class["1"](4,0)
		\class["1"](4,1)
		\class["0"](4,2)
		\class["0"](4,3)
		\class["1"](4,4)
		\class["1"](4,5)
		
		\class["0"](5,0)
		\class["0"](5,1)
		\class["0"](5,2)
		\class["0"](5,3)
		\class["0"](5,4)
		\class["0"](5,5)
		
		\class["1"](6,0)
		\class["1"](6,1)
		\class["0"](6,2)
		\class["0"](6,3)
		\class["1"](6,4)
		\class["1"](6,5)
		
		\class["\cdots"](7,0)
		\class["\cdots"](7,1)
		\class["\cdots"](7,2)
		\class["\cdots"](7,3)
		\class["\cdots"](7,4)
		\class["\dots"](7,5)
		
		\node[background] at (3.5,-1.5) {H^*_{S^1}(C_6(\C);\F_3)};
		
	\end{sseqpage}
	\qquad
	\begin{sseqpage}[classes = {draw = none }, xscale = 0.6, yscale=0.8, no ticks]
		\class["1"](0,0)
		\class["2"](0,1)
		\class["1"](0,2)

		\class["0"](1,0)
		\class["0"](1,1)
		\class["0"](1,2)
		
		\class["1"](2,0)
		\class["2"](2,1)
		\class["1"](2,2)
		
		\class["0"](3,0)
		\class["0"](3,1)
		\class["0"](3,2)
		
		\class["1"](4,0)
		\class["2"](4,1)
		\class["1"](4,2)
		
		\class["0"](5,0)
		\class["0"](5,1)
		\class["0"](5,2)
		
		\class["1"](6,0)
		\class["2"](6,1)
		\class["1"](6,2)

		\class["\cdots"](7,0)
		\class["\cdots"](7,1)
		\class["\cdots"](7,2)
		
		\node[background] at (3.5,-1.5) {H^*_{S^1}(C_6(\C)^{\Z/3};\F_3)};
	\end{sseqpage}
	
	As we know, both the spectral sequences degenerate at the second page and summing over the diagonals we get the ranks of $H^i_{S^1}(C_6(\C))$ and $H^i_{S^1}(C_6(\C)^{\Z/3})$. By Lemma \ref{lem:mono in coomologia equivariante} the inclusion $i: C_6(\C)^{\Z/3}\hookrightarrow C_6(\C)$ induces a monomorphism $i^*:H^i_{S^1}(C_6(\C);\F_3)\to H^i_{S^1}(C_6(\C)^{\Z/3};\F_3)$ in each degree, so we can conclude that it is an isomorphism for any $i\geq 4$ by looking at the ranks. If we put all these information in the Mayer-Vietoris sequence we conclude immediately that
	\[
	H^i(C_6(\C)/S^1;\F_3)=\begin{cases}
		0 \quad \text{ for all } i\geq 5 \text{ and } i=1,2 \\
		\F_3 \quad \text{ for } i=0,3,4\\
		
	\end{cases}
	\]
	Thus $C_6(\C)/S^1$ is not contractible, since  $H^*(C_6(\C)/S^1;\F_3)\neq 0$ for $*=3,4$.
\end{itemize}

\appendix
\section{The homology of $C_n(\C^*)$}\label{app: omologia spazi di configurazione etichettati }
In this appendix we briefly review some classical results about the homology of $C_n(\C^*)$ that were used in Section \ref{sec:examples}. 
	\begin{defn}[\cite{Bodigheimer}]
	Let $M$ be a manifold and $(X,\ast)$ be a based CW-complex, not necessarily connected. The space of configurations in $M$ with labels in $X$ is defined as 
	\[
	C(M;X)\coloneqq\bigsqcup_{n\in\N}F_n(M)\times_{\Sigma_n} X^n/\sim
	\]
	where $(p_1,\dots,p_n;x_1,\dots,x_n)\sim (p_1,\dots,\hat{p}_i,\dots,p_n;x_1,\dots,\hat{x}_i,\dots,x_n)$ if $x_i=\ast$.
\end{defn}
When $M=\R^n$ the homology of $C(\R^n;X)$ is known thanks to the work of Fred Cohen \cite{Cohen}, who computed $H_*(C(\R^n;X);\F_p)$ in terms of $H_*(X;\F_p)$. For the purpose of this paper we focus on the case $X=S^0\vee S^0=\{\ast,a,b\}$, where $\ast$ is the basepoint. In this case $C(\C;S^0\vee S^0)$ is a disjoint union of components $\{C_{n\circ+m\bullet}(\C)\}_{n,m\in\N}$ consisting of all configurations of $m$ black particles and $n$ white particles, where by \emph{black particle} (resp. \emph{white particle}) we mean a point in the plane with label $a$ (resp. $b$). A tipical element of $C(\C;S^0\vee S^0)$ is depicted in Figure \ref{fig:black and white particles}.
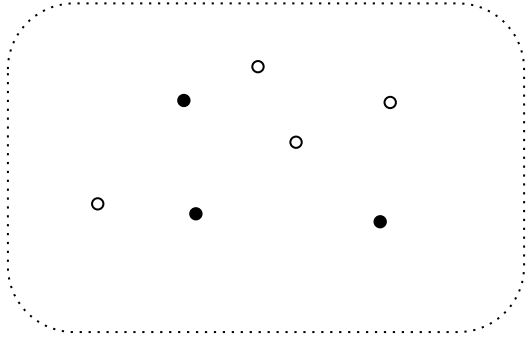
\begin{figure}
	\centering
	
	\tikzset{every picture/.style={line width=0.75pt}} 
	
	\begin{tikzpicture}[x=0.75pt,y=0.75pt,yscale=-1,xscale=1]
		
		\draw  [dash pattern={on 0.84pt off 2.51pt}] (98,132.07) .. controls (98,113.8) and (112.8,99) .. (131.07,99) -- (322.6,99) .. controls (340.86,99) and (355.67,113.8) .. (355.67,132.07) -- (355.67,231.27) .. controls (355.67,249.53) and (340.86,264.33) .. (322.6,264.33) -- (131.07,264.33) .. controls (112.8,264.33) and (98,249.53) .. (98,231.27) -- cycle ;
		\draw   (220,130.83) .. controls (220,129.27) and (221.27,128) .. (222.83,128) .. controls (224.4,128) and (225.67,129.27) .. (225.67,130.83) .. controls (225.67,132.4) and (224.4,133.67) .. (222.83,133.67) .. controls (221.27,133.67) and (220,132.4) .. (220,130.83) -- cycle ;
		\draw   (140,199.83) .. controls (140,198.27) and (141.27,197) .. (142.83,197) .. controls (144.4,197) and (145.67,198.27) .. (145.67,199.83) .. controls (145.67,201.4) and (144.4,202.67) .. (142.83,202.67) .. controls (141.27,202.67) and (140,201.4) .. (140,199.83) -- cycle ;
		\draw   (286,148.83) .. controls (286,147.27) and (287.27,146) .. (288.83,146) .. controls (290.4,146) and (291.67,147.27) .. (291.67,148.83) .. controls (291.67,150.4) and (290.4,151.67) .. (288.83,151.67) .. controls (287.27,151.67) and (286,150.4) .. (286,148.83) -- cycle ;
		\draw   (239,168.83) .. controls (239,167.27) and (240.27,166) .. (241.83,166) .. controls (243.4,166) and (244.67,167.27) .. (244.67,168.83) .. controls (244.67,170.4) and (243.4,171.67) .. (241.83,171.67) .. controls (240.27,171.67) and (239,170.4) .. (239,168.83) -- cycle ;
		\draw  [fill={rgb, 255:red, 0; green, 0; blue, 0 }  ,fill opacity=1 ] (183,147.83) .. controls (183,146.27) and (184.27,145) .. (185.83,145) .. controls (187.4,145) and (188.67,146.27) .. (188.67,147.83) .. controls (188.67,149.4) and (187.4,150.67) .. (185.83,150.67) .. controls (184.27,150.67) and (183,149.4) .. (183,147.83) -- cycle ;
		\draw  [fill={rgb, 255:red, 0; green, 0; blue, 0 }  ,fill opacity=1 ] (189,204.83) .. controls (189,203.27) and (190.27,202) .. (191.83,202) .. controls (193.4,202) and (194.67,203.27) .. (194.67,204.83) .. controls (194.67,206.4) and (193.4,207.67) .. (191.83,207.67) .. controls (190.27,207.67) and (189,206.4) .. (189,204.83) -- cycle ;
		\draw  [fill={rgb, 255:red, 0; green, 0; blue, 0 }  ,fill opacity=1 ] (281,208.83) .. controls (281,207.27) and (282.27,206) .. (283.83,206) .. controls (285.4,206) and (286.67,207.27) .. (286.67,208.83) .. controls (286.67,210.4) and (285.4,211.67) .. (283.83,211.67) .. controls (282.27,211.67) and (281,210.4) .. (281,208.83) -- cycle ;
		
	\end{tikzpicture}
	
	\caption{In this picture we see a point of $C_{4\circ+3\bullet}(\C)$.}
	\label{fig:black and white particles}
\end{figure}

\begin{oss}\label{oss: gli Z/p punti fissi sono omeomorfi a configurazioni sul piano bucato}
	$C_n(\C^*)$ is homotopy equivalent to the configuration space of $n$ black particles and one white particle in the plane. Therefore $H_*(C_n(\C^*);\F_p)$ will be the subspace of $H_*(C(\C,S^0\vee S^0);\F_p)$ spanned by those classes that involve only $n$ black particles and one white particle. 
\end{oss}
Let us denote by $a$ (resp. $b$) the class in $H_0(S^0\vee S^0)$ which represent a black particle (resp. a white particle). The following Proposition just identifies $H_*(C_n(\C^*);\F_p)$ as a subspace of $H_*(C(\C,S^0\vee S^0);\F_p)$ (for details see \cite{Rossi1}):
\begin{prop}\label{prop:omologia nelle configurazioni nel piano bucato} Let $n\in\N$ and $p$ be a prime. Then
	\[
	H_*(C_n(\C^*);\F_p)=\bigoplus_{k=0}^nb_k\cdot H_*(C_{n-k}(\C);\F_p)
	\]
	where 
	\begin{align*}
		& b_0\coloneqq b\\
		& b_1\coloneqq[b,a]\\
		&b_2\coloneqq[[b,a],a]\\
		&b_3\coloneqq[[[b,a],a],a]\\
		&\cdots
	\end{align*}
\end{prop}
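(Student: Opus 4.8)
The statement to prove is Proposition \ref{prop:omologia nelle configurazioni nel piano bucato}, which identifies $H_*(C_n(\C^*);\F_p)$ as the subspace $\bigoplus_{k=0}^n b_k\cdot H_*(C_{n-k}(\C);\F_p)$ of the homology of the two-colored configuration space, where $b_k$ is the $k$-fold iterated Browder bracket $[[\cdots[b,a],a],\dots,a]$.

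\medskip

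The plan is to leverage Cohen's computation (Theorem \ref{thm: omologia spazi di configurazione etichettati}) applied to the label space $S^0\vee S^0=\{\ast,a,b\}$, combined with the identification of $C_n(\C^*)$ with the component of $C(\C;S^0\vee S^0)$ having $n$ black particles and exactly one white particle (Remark \ref{oss: gli Z/p punti fissi sono omeomorfi a configurazioni sul piano bucato}). First I would write out the full algebra $H_*(C(\C;S^0\vee S^0);\F_p)$ as the free graded-commutative algebra on the generators $Q^i(x)$ (and $\beta Q^i(x)$ when $p$ is odd) where $x$ ranges over basic brackets built from $\{a,b\}$. Each such generator carries a \emph{bidegree} recording the number of $a$'s and the number of $b$'s it involves, and this bidegree is additive under the algebra multiplication; moreover the Dyer--Lashof operations and Browder brackets multiply these particle counts in a controlled way ($Q^i$ multiplies both counts by $p$, while $[-,-]$ adds them). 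The key point is then purely combinatorial: I must enumerate which monomials in these generators land in the bidegree ``$(n \text{ black}, 1 \text{ white})$''.

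\medskip

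The main step is the enumeration. Since a monomial's white-particle count is the sum of the white counts of its generator factors, and each factor has white count a multiple of $p$ unless it ``uses'' the letter $b$ in a bracket, the only way to get total white count exactly $1$ is: the monomial is a product of one factor of white-count $1$ and arbitrarily many factors of white-count $0$. A generator $Q^i(x)$ has white count $p^i \cdot (\text{white count of } x)$, so white count $1$ forces $i=0$ and $x$ a basic bracket with exactly one $b$; such basic brackets are precisely $b=b_0$, $[b,a]=b_1$, $[[b,a],a]=b_2$, etc. (one checks the Hilton--Milnor / Cohen basic-bracket conventions admit no other normal forms with a single $b$ — for instance $[a,b]$ is rewritten using antisymmetry, and $[b,[a,a]]$-type brackets reduce or are excluded by the ordering convention). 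The factors of white count $0$ are exactly the generators built from $a$ alone, whose span is $H_*(C(\C);\F_p)=\bigsqcup_m H_*(C_m(\C);\F_p)$ by Theorem \ref{thm: omologia spazio di configurazione }. Fixing the black count to be $n$ then gives the direct sum decomposition $\bigoplus_{k=0}^n b_k\cdot H_*(C_{n-k}(\C);\F_p)$, where $b_k$ contributes $k$ black particles (it has $k$ copies of $a$) and the remaining factor contributes $n-k$.

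\medskip

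I expect the main obstacle to be the careful justification that the $b_k$ are \emph{exactly} the basic brackets with a single occurrence of $b$, and that no relations (graded-commutativity, Jacobi/Browder identities, interaction of $\beta$ with brackets) produce either extra classes or unexpected coincidences among the listed ones — in other words, that the displayed sum is genuinely direct and spans everything. This is essentially a bookkeeping argument internal to Cohen's structure theorem, and since the detailed verification appears in \cite{Rossi1}, I would cite that reference for the combinatorial normal-form analysis and present here only the bidegree/particle-counting skeleton above. A secondary (minor) point is to confirm that the homotopy equivalence of Remark \ref{oss: gli Z/p punti fissi sono omeomorfi a configurazioni sul piano bucato} — collapsing the unique white particle of the two-colored model to recover $C_n(\C^*)$ (the white point plays the role of the deleted origin, or dually, one extra marked point on $\C$ whose complement is $\C^*$) — is compatible with the algebra structure in the relevant range, which is immediate from the inclusion of the corresponding path-component.
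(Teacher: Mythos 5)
Your proposal is correct and follows essentially the same route as the paper: restrict Cohen's computation of $H_*(C(\C;S^0\vee S^0);\F_p)$ to the path component with $n$ black particles and one white, and read off the monomials via a bidegree/particle-counting argument, with the only white-count-one generators being the iterated brackets $b_k$. The paper itself gives no independent proof beyond citing \cite{Rossi1} for the combinatorial details, so your choice to defer the basic-bracket normal-form verification (e.g.\ that Jacobi kills candidates like $[b,[a,a]]$ and that the ordering rules out others) to that reference matches the paper's own treatment.
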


\printbibliography
\end{document}